\begin{document}
\title[Metric completions from finite dimensional algebras]{Metric completions of triangulated categories from finite dimensional algebras}

\author[\textsc{Cyril Matou\v sek}]{\textsc{Cyril Matou\v sek} \orcidlink{0009-0005-0299-7021}}
\address{Aarhus University,
Department of Mathematics,
Ny Munkegade 118,
8000 Aar\-hus~C,
Denmark}
\email{cyril.matousek@math.au.dk}

\keywords{Triangulated category, metric, completion, derived category, finite dimensional algebra, finite representation type, Dynkin quiver}
\thanks{This work was supported by a research grant (VIL42076) from VILLUM FONDEN}
\subjclass[2020]{16E35, 16G10, 18G80}
\date{\today}

% = Abstract ========================================================
\begin{abstract}
In this paper, we study metric completions of triangulated categories in~a~re\-pres\-en\-ta\-tion-the\-o\-re\-tic context.
We provide a concrete description of completions of bounded derived categories of hereditary finite dimensional algebras of finite representation type.
In order to investigate completions of~bounded derived categories of algebras of finite global dimension, we define image and preimage metrics under a triangulated functor and use them to~induce a triangulated equivalence between two completions.
Furthermore, for a given metric on a~triangulated category we construct a new, closely related good metric called the improvement and compare the respective completions.
\end{abstract} 

\maketitle
\vspace{4ex}

{
\hypersetup{linkcolor=black}
\tableofcontents
}

\section{Introduction}
\label{sec:intro}

Since the introduction of triangulated categories by Verdier \cite{Verdier96} and Puppe \cite{Puppe62}, there have not been many known ways to construct a new triangulated category from an~already existing one without the presence of an enhancement or a separable monoidal structure \cite{Balmer11}.
Together with other rare, advanced methods such as orbit categories \cite{Keller05} and relative stable categories \cite{Beligiannis00}, \cite{BalmerStevenson21}, Neeman's recent groundbreaking result from \cite{Neeman18} goes against the prevailing belief that triangulated categories do not reproduce.

In \cite{Neeman18}, Neeman constructs completions of triangulated categories using metric techniques. His method resembles the~construction of a completion of~a~metric space, while the concept of defining a metric (or a norm) on a category evolves from the influential paper \cite{Lawvere73} by Lawvere from 1973. Neeman also employs his previous ideas about approximability of triangulated categories from~\cite{Neeman21}. By defining a~metric on a triangulated category, we assign rational-valued lengths to~morphisms in~the~category, and then formally add colimits of Cauchy sequences (i.e.\ \(\N\)-shaped diagrams where morphisms eventually get shorter and shorter) to our category. If we then restrict ourselves to so-called compactly supported elements, i.e.\ those colimits of~Cauchy sequences which behave nicely with respect to very short morphisms, we obtain a~triangulated category called the completion.

Neeman's original motivation in \cite{Neeman18} was to generalise Rickard's results on derived Morita equivalences from~\cite{Rickard89}.
He shows that for a noetherian and weakly approximable triangulated category \(\mathcal{T}\) there exist intrinsically determined metrics on the triangulated categories of compact elements \(\mathcal{T}^c\) of \(\mathcal{T}\) and the bounded category \(\mathcal{T}^b_c\) (in the sense of \cite[Definition~0.16]{Neeman21}).
Using these canonical metrics, one can show that \(\mathcal{T}^c\) and \(\mathcal{T}^b_c\) are completions of each other (up to switching to opposite categories). For a coherent ring $R$, setting \(\mathcal{T}:=\derived(\Mod\dashmodule R)\)
recovers Rickard's original result \cite[Proposition~8.2]{Rickard89}. 
Another particular instance of~such a phenomenon is the case of a derived category \(\derived(X)\) of~a~separated, noetherian scheme $X$ and the subcategories \(\derived^{\text{perf}}(X)\) (perfect complexes) and \(\derived^b_{\text{coh}}(X)\) (complexes with bounded, coherent cohomology).
For~more examples and applications, we refer to \cite{Neeman18}, \cite{Neeman22} and Section 5 of \cite{CanonacoNeemanStellari24}. The last two aforementioned papers elaborate further on the method of metric completions in~the~context of approximable triangulated categories.

Neeman summarizes his method of metric completions in his survey paper \cite{Neeman20} where he also introduces a more refined version of a metric, a so-called good metric. While the definition of a good metric is more restrictive, it makes statements of~theorems and their proofs slicker and less technical. We shall provide more details on good metrics in Section \ref{sec:improvements}.

Even though Neeman's result on completing triangulated categories with respect to a metric is relatively new, metric completions have already found their use in~several recent papers. 
Metric completions have been used by Sun and Zhang in~\cite{SunZhang21} to study recollements of triangulated categories, and their main theorem \cite[Theorem~4.4]{SunZhang21} states that under certain conditions the construction of a completion transforms a recollement of triangulated categories into a right recollement of~the~corresponding completions.
In \cite{BiswasChenRahulParkerZheng24}, Biswas, Chen, Rahul, Parker, and Zheng use completions (and invariance under the completion construction) with~respect to~certain natural good metrics on a triangulated category as a criterion for~existence (and mutual equivalence) of bounded $t$-structures on said triangulated category (\cite[Theorem~1.5]{BiswasChenRahulParkerZheng24}).
Cummings and Gratz calculate completions of discrete cluster categories of type \(\mathbb{A}\) with respect to good metrics in \cite{CummingsGratz24}, and give their combinatorial descriptions.

In this paper, we study metrics and metric completions in a~representation-the\-o\-re\-tic setting. While the results from \cite{Neeman18} and \cite{BiswasChenRahulParkerZheng24} focus mainly on metrics coming from~$t$-structures with motivation in algebraic geometry, our aim is to systematically calculate completions with respect to any arbitrary metric on a bounded derived category \(\derived^b(\modf\dashmodule
A)\) of a sufficiently nice finite dimensional algebra $A$.

We provide an explicit formula for calculating completions of \(\derived^b(\modf\dashmodule
A)\) for $A$ a~hereditary algebra of finite representation type (e.g.\ a path algebra of a simply laced Dynkin quiver) with respect to any metric.
A metric on a triangulated category formally consists of a decreasing chain of subcategories (see Definition~\ref{DefinitionMetric}) that should be thought of as shrinking open balls around zero, so in our case we can express the~completion in terms of these open balls.

\begin{theorem}[{Theorem~\ref{DynkinCompletions}}]
\label{IntroductionDynkinCompletions}
Let \(A\) be a hereditary finite dimensional algebra (over a field) of finite representation type.
Let \(\{B_n\}_{n\in\N}\) be a good metric on~the~category \(\derived^b(\modf\dashmodule A)\).
Denote \(\mathcal{B}:=\bigcap_{n=1}^\infty B_n\).

The completion of \(\derived^b(\modf\dashmodule A)\) with respect to the good metric \(\{B_n\}_{n\in\N}\) is equal to
\(
\mathcal{B}^\perp
\subseteq\derived^b(\modf\dashmodule A)
\)
where
\(\mathcal{B}^\perp:=\bigcap_{B\in\mathcal{B}}\Ker\Hom_{\derived^b(\modf\dashmodule A)}(B,\blank)\).
\end{theorem}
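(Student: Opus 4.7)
The strategy is to exploit the combinatorial rigidity of $\mathcal{T} := \derived^b(\modf\dashmodule A)$. Because $A$ is hereditary of finite representation type, $\mathcal{T}$ is Krull--Schmidt and its indecomposables (shifts of the finitely many indecomposable $A$-modules) sit in an AR-quiver of the form $\mathbb{Z}\Delta$ for a Dynkin diagram $\Delta$. Each subcategory $B_n$ in a good metric is additively generated by the set of indecomposables it contains, and an indecomposable lies in $\mathcal{B}$ if and only if it lies in every $B_n$; otherwise it fails to lie in some $B_N$.

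The central step is to prove that every Cauchy sequence $X_1 \to X_2 \to \cdots$ in $\mathcal{T}$ is equivalent, as a formal colimit, to the constant sequence on a genuine object $X \in \mathcal{T}$. For this, I would decompose each $X_n$ via Krull--Schmidt into a $\mathcal{B}$-part and a summand built from indecomposables outside $\mathcal{B}$. The cone of $X_n \to X_{n+1}$ eventually lies in arbitrarily deep $B_N$, so after finitely many stages the non-$\mathcal{B}$ parts of consecutive terms can only differ by summands in some $B_N$; iterating, the non-$\mathcal{B}$ part must stabilise. This identifies the formal colimit with a genuine $X \in \mathcal{T}$ and realises the completion as a subcategory of $\mathcal{T}$.

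Next I would characterise compact support. Unwinding the definition, the formal colimit of the constant sequence on $Y \in \mathcal{T}$ is compactly supported if and only if $\Hom(B, Y) = 0$ for every $B \in \mathcal{B}$, i.e.\ $Y \in \mathcal{B}^\perp$. Hereditariness of $A$ ensures that only Hom (not higher Ext) obstructions intervene, and the good-metric axioms guarantee that $\mathcal{B}^\perp$ is itself triangulated. The reverse inclusion $\mathcal{B}^\perp \subseteq \mathrm{Completion}$ is immediate: any $Y \in \mathcal{B}^\perp$, viewed as a constant sequence, is automatically compactly supported by its own defining orthogonality.

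The main obstacle I anticipate is the stabilisation step in the second paragraph. A priori, non-$\mathcal{B}$ indecomposable summands of $X_n$ could keep drifting with $n$, cancelling against one another via transition morphisms whose cones happen to lie in some $B_N$. Finite representation type restricts the possible indecomposables to the AR-orbit $\mathbb{Z}\Delta$, and hereditariness controls the morphism spaces between them; one presumably inducts on a suitable partial order on indecomposables and exploits the good-metric closure axioms to rule this drift out. Making this argument precise, and verifying the interplay between compact support and membership in $\mathcal{B}^\perp$, is where the genuine work sits.
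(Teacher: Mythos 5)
Your proposal has two genuine gaps, one per direction of the desired equality.

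First, the stabilisation claim in your second paragraph is false. You assert that every (good) Cauchy sequence is, as a formal colimit, equivalent to a constant one on a genuine object of $\mathcal{T}$. This cannot be proved before invoking compact support, because the pre-completion $\mathfrak{L}'(\mathcal{T})$ genuinely contains infinitely generated objects. For example, take $S\in\mathcal{B}$ indecomposable and set $E_n=S^{\oplus n}$ with the split inclusions as transition maps; all cones are copies of $S\in\mathcal{B}\subseteq B_i$ for every $i$, so this is a good Cauchy sequence, but its homotopy colimit $S^{(\N)}$ is not in $\derived^b(\modf\dashmodule A)$. What forces boundedness and finite generation of cohomology is precisely the \emph{interaction} of compact support with finite representation type (in the paper, Proposition~\ref{BoundedHereditaryCompletion} gives boundedness from hereditariness plus a factorisation through a direct summand, and Lemma~\ref{FinitenessOfIndexingSet} gives finite generation of each $H^i$ from compact support plus finite $\dim_K\Hom$). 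Your Krull--Schmidt drift argument on the sequence, without compact support built in, is attacking the wrong object.

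Second, and more importantly, the reverse inclusion $\mathcal{B}^\perp\subseteq\mathfrak{S}'(\mathcal{T})$ is emphatically not immediate. Compact support for a good metric demands $Y\in\bigcup_i Y(B_i)^\perp$, i.e.\ orthogonality to some \emph{single} $B_i$, which is a priori \emph{strictly stronger} than orthogonality to the intersection $\mathcal{B}=\bigcap_i B_i$ (the inclusions go the wrong way: $B_i^\perp\subseteq\mathcal{B}^\perp$). Passing from ``$\Hom(\mathcal{B},Y)=0$'' to ``$\Hom(B_n,Y)=0$ for some $n$'' is exactly where finite representation type enters: because $\ind\dashmodule A$ is finite and $Y$ has cohomology in a bounded range, one can choose $n$ so large that every indecomposable of the relevant truncation of $\Summ(B_n)$ already lies in $\Summ(\mathcal{B})$, and Auslander's Theorem~\ref{DynkinBigModuleDecomposition} controls what arbitrary objects of $B_n$ can look like. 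Without this argument your ``reverse inclusion is immediate'' sentence is an unproved claim, and it is in fact the core of the theorem. Your intuition that a combinatorial/finiteness mechanism must enter is right, but you have placed the work in the wrong direction of the proof.
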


The condition on \(\{B_n\}_{n\in\N}\) of being a good metric is, in fact, not necessary. Theorem~\ref{DynkinCompletions} is stated more generally and encompasses also the case where the~metric is not good. 
A noteworthy corollary of our theorem is that the thick subcategories of \(\derived^b(\modf\dashmodule A)\) are precisely the completions of \(\derived^b(\modf\dashmodule A)\) with respect to some metric (see Corollary~\ref{ThickSubcategories}).

Our second main result is concerned with functors between completions of triangulated categories of the form \(\derived^b(\Mod\dashmodule A)\) for a finite dimensional algebra of~finite global dimension, or even more generally between completions of~$K$\=/linear triangulated categories (over a field $K$) with Serre functors. 
Our starting point is Theorem~4.3 from Sun and Zhang’s paper \cite{SunZhang21}, that triangulated functors between triangulated categories under certain continuity conditions induce triangulated functors on their completions, whose generalisation for general (non-good) metrics is presented in this paper.

\begin{theorem}[{\cite[Theorem~4.3, pt.\ 3]{SunZhang21}}, Theorem~\ref{SunZhangTheorem}]
\label{IntroductionSunZhangTheorem}
Let \(F:\mathcal{T}\rightleftarrows \mathcal{S}:G\) be adjoint triangulated functors between triangulated categories equipped with metrics. 
If $F$ and $G$ are compressions,
then the precomposition \(\blank\circ F:\Mod\dashmodule\mathcal{S}\rightarrow\Mod\dashmodule\mathcal{T}\) induces a~triangulated functor \(\blank\circ F:\mathfrak{S}({\mathcal{S}})\rightarrow\mathfrak{S}(\mathcal{T})\) between the completion \(\mathfrak{S}({\mathcal{S}})\) of \(\mathcal{S}\) and the completion \(\mathfrak{S}({\mathcal{T}})\) of \(\mathcal{T}\).

In other words, we have a commutative diagram
 \[
\begin{tikzcd}
    \mathfrak{S}({\mathcal{S}}) \arrow[d, hook] \arrow{r}{\blank\circ F\restriction\mathfrak{S}({\mathcal{S}})} & \mathfrak{S}({\mathcal{T}}) \arrow[d, hook] \\
    \Mod\dashmodule\mathcal{S} \arrow{r}{\blank\circ F}                & \Mod\dashmodule\mathcal{T}               
\end{tikzcd}
\]
with the upper horizontal functor \(\blank\circ F\restriction\mathfrak{S}({\mathcal{S}})\) being triangulated and the vertical functors being inclusions.
\end{theorem}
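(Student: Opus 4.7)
My plan is to define the induced map on completions as the honest restriction of the precomposition functor \(\blank\circ F\colon \Mod\dashmodule\mathcal{S}\to\Mod\dashmodule\mathcal{T}\), so that commutativity of the displayed square is tautological. Exactness of precomposition on the abelian module categories then makes the restriction triangulated as soon as one knows that it lands in \(\mathfrak{S}({\mathcal{T}})\); hence the whole content of the theorem reduces to showing this inclusion of images.

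To set up the argument I would first record how \(\blank\circ F\) behaves on the Yoneda embedding. For every \(Y\in\mathcal{S}\) the adjunction \(F\dashv G\) yields a natural isomorphism \(\Hom_\mathcal{S}(F(\blank),Y)\cong\Hom_\mathcal{T}(\blank,G(Y))\), so the representable functor \(\Hom_\mathcal{S}(\blank,Y)\) is sent to \(\Hom_\mathcal{T}(\blank,G(Y))\). Because colimits in module categories are computed pointwise and precomposition commutes with them, the colimit of representables associated with a sequence \((Y_n)\) in \(\mathcal{S}\) goes to the colimit of representables associated with \((G(Y_n))\) in \(\mathcal{T}\). The compression hypothesis on \(G\) — for every \(n\) there exists \(m\) with \(G(B_m^\mathcal{S})\subseteq B_n^\mathcal{T}\) — then upgrades this to the statement that \((G(Y_n))\) is Cauchy whenever \((Y_n)\) is, so colimits of Cauchy sequences in \(\mathfrak{S}({\mathcal{S}})\) are sent to colimits of Cauchy sequences in \(\Mod\dashmodule\mathcal{T}\).

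The main obstacle is to verify that, beyond the existence of a Cauchy presentation, the compactly supported condition itself carries over to \(M\circ F\), and this is where the compression property of \(F\) (rather than \(G\)) is essential. Compact support of \(M\in\mathfrak{S}({\mathcal{S}})\) is a local condition tested against the balls \(B_n^\mathcal{S}\); to verify the analogous condition for \(M\circ F\) at a ball \(B_n^\mathcal{T}\), one chooses \(k\) with \(F(B_k^\mathcal{T})\subseteq B_n^\mathcal{S}\) and transfers the relevant approximating data for \(M\) along this inclusion. For a general (non-good) metric the balls need not be closed under cones, so the Cauchy approximations have to be constructed by hand with careful control of cone sizes in both \(\mathcal{S}\) and \(\mathcal{T}\); this octahedral bookkeeping — absent from the original good-metric statement of Sun and Zhang — is the technical heart of the argument, and once it is in place both the inclusion \(\blank\circ F(\mathfrak{S}({\mathcal{S}}))\subseteq\mathfrak{S}({\mathcal{T}})\) and the triangulatedness of the restriction follow at once.
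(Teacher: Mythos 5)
Your overall decomposition of the problem agrees with the paper's (well-definedness on the pre-completion via the adjunction $\Hom_\mathcal{S}(F(\blank),Y)\cong\Hom_\mathcal{T}(\blank,G(Y))$ and compression of $G$, then compact support via compression of $F$), but there are two substantive gaps.

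First, and most seriously, the claim that ``exactness of precomposition on the abelian module categories then makes the restriction triangulated as soon as one knows that it lands in $\mathfrak{S}(\mathcal{T})$'' is not correct. The triangulated structure on $\mathfrak{S}(\mathcal{T})$ is not inherited from the ambient abelian category $\Mod\dashmodule\mathcal{T}$ — indeed $\Mod\dashmodule\mathcal{T}$ is abelian, not triangulated. Distinguished triangles in $\mathfrak{S}(\mathcal{T})$ are, by Definition~\ref{DefinitionCompletion}, precisely those candidate triangles isomorphic to module colimits of good Cauchy sequences of triangles in $\mathcal{T}$ under Yoneda. So to show $\blank\circ F\restriction_{\mathfrak{S}(\mathcal{S})}$ is triangulated you must check the shift-compatibility isomorphism and, more importantly, that a module colimit of a good Cauchy sequence of triangles $A_*\to B_*\to C_*\to\Sigma A_*$ in $\mathcal{S}$ is sent to a triangle presentable in the same form over $\mathcal{T}$. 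In the paper this is the concluding step of the proof of Theorem~\ref{SunZhangTheorem}: one applies Lemma~\ref{CompressionsPreservePrecompletion} to each vertex to identify the image with the module colimit of the good Cauchy sequence of triangles $G(A_*)\to G(B_*)\to G(C_*)\to\Sigma G(A_*)$. This argument is missing from your proposal, and ``exactness of precomposition'' cannot substitute for it.

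Second, your description of the compact-support step overestimates its difficulty. You claim that for non-good metrics ``the Cauchy approximations have to be constructed by hand with careful control of cone sizes'' via ``octahedral bookkeeping.'' No such construction is required. The membership $M\circ F\in\mathfrak{C}(\mathcal{T})$ is a pointwise vanishing condition checked directly via Yoneda: for each $j\in\Z$, pick $i$ with $M\in Y(\Sigma^j C_i)^\perp$ and $k$ with $F(B_k)\subseteq C_i$; then for $X\in B_k$ one has $\Hom(Y(\Sigma^jX),M\circ F)\cong M(F(\Sigma^jX))\cong\Hom(Y(\Sigma^jF(X)),M)=0$. No Cauchy sequences, cones, or octahedra enter. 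The entire delicacy of the non-good case lives elsewhere, namely in the fact that $\bigcap_{j\in\Z}$ in the definition of $\mathfrak{C}(\mathcal{T})$ must genuinely range over all integer shifts (so one needs compression of $F$ for each $j$ separately), which the Yoneda argument handles cleanly.
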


Utilising Theorem~\ref{IntroductionSunZhangTheorem}, we can express a completion of triangulated category as a~completion of another using a related metric. For that purpose we define and study the image and~the~preimage of a metric under~a~triangulated functor. It turns out that the preimage of a metric is always a metric (see Lemma~\ref{PreimageIsAMetric}) and the image of a metric is a metric if it is taken under a full triangulated functor (see Lemma~\ref{ImageIsSometimesAMetric}).
In the following theorem we use a preimage metric in such a way, that all relevant functors are compressions and consequently Theorem~\ref{IntroductionSunZhangTheorem} applies.

\begin{theorem}[Theorem~\ref{CalculatingCompletionsElsewhere}]
\label{IntroductionCalculatingCompletionsElsewhere}
Consider a diagram of triangulated functors 
\vspace{-0.1cm}
\begin{equation}\nonumber   
\xymatrix@C=0.5cm{\mathcal{S} \ar[rrr]^{G}     &&& \mathcal{T}\ar @/_1.5pc/[lll]_{F} \ar @/^1.5pc/[lll]_{J} } 
\end{equation}
consisting of two adjoint pairs \(F:\mathcal{T}\rightleftarrows \mathcal{S}:G\) and \(G:\mathcal{S}\rightleftarrows \mathcal{T}:J\). 
Let \(\{B_n\}_{n\in\N}\) be a metric on \(\mathcal{T}\).
Denote the~completion of~\(\mathcal{T}\) with respect to \(\{B_n\}_{n\in\N}\) as \(\mathfrak{S}(\mathcal{T})\) and
the completion of \(\mathcal{S}\) with respect to~the~preimage metric \(\{G^{-1}(B_n)\}_{n\in\N}\) as \(\mathfrak{S}(\mathcal{S})\).

If $F$ and $J$ are fully faithful, then the induced functor
\(\blank\circ G:\mathfrak{S}({\mathcal{T}})\rightarrow\mathfrak{S}(\mathcal{S})\) given by Theorem~\ref{IntroductionSunZhangTheorem} is a well-defined triangulated equivalence with a triangulated inverse \(\blank\circ F\).
\end{theorem}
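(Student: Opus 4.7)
The plan is to invoke Theorem~\ref{IntroductionSunZhangTheorem} for both adjoint pairs \(F\dashv G\) and \(G\dashv J\), thereby obtaining triangulated functors \(\blank\circ G\colon\mathfrak{S}(\mathcal{T})\to\mathfrak{S}(\mathcal{S})\) and \(\blank\circ F\colon\mathfrak{S}(\mathcal{S})\to\mathfrak{S}(\mathcal{T})\), and then to verify that their compositions in both orders are naturally isomorphic to the identity.

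First I would check the compression hypotheses. By the very definition of the preimage metric, \(G(G^{-1}(B_n))\subseteq B_n\), so \(G\) is a compression. Since \(F\) is fully faithful the unit \(\eta\colon\mathrm{id}_\mathcal{T}\to GF\) is an isomorphism, so \(GF(B_n)=B_n\) and therefore \(F(B_n)\subseteq G^{-1}(B_n)\); analogously, \(J\) fully faithful yields \(GJ\cong\mathrm{id}_\mathcal{T}\) and \(J(B_n)\subseteq G^{-1}(B_n)\). Thus Theorem~\ref{IntroductionSunZhangTheorem} applied to \(F\dashv G\) produces the triangulated \(\blank\circ F\), and applied to \(G\dashv J\) produces the triangulated \(\blank\circ G\).

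The easy direction of the equivalence is then immediate: \((\blank\circ F)(\blank\circ G)(M) = M\circ GF\cong M\) via the inverse of the (iso) unit \(\eta\), naturally on all of \(\Mod\dashmodule\mathcal{T}\), and in particular on \(\mathfrak{S}(\mathcal{T})\).

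The main obstacle is the other composition \((\blank\circ G)(\blank\circ F)(N) = N\circ FG\) for \(N\in\mathfrak{S}(\mathcal{S})\). My plan is to complete each counit \(\epsilon_s\colon FGs\to s\) to a triangle \(FGs\to s\to C_s\to\Sigma FGs\), then apply \(G\) and use the triangle identity \(G\epsilon\cdot\eta G=\mathrm{id}_G\) combined with the isomorphism \(\eta\) to deduce that \(G\epsilon_s\) is an isomorphism, forcing \(GC_s=0\). This places \(C_s\) and all its shifts in the suspension-stable intersection \(\bigcap_n G^{-1}(B_n)\). The technical heart of the argument is then to show that every \(N\in\mathfrak{S}(\mathcal{S})\) vanishes on \(\bigcap_n G^{-1}(B_n)\); this should follow from Neeman's construction of the completion as compactly supported colimits of Cauchy sequences of representables, where such \(N\) are approximable by representables up to arbitrarily small subcategories \(G^{-1}(B_n)\) and so annihilate their intersection. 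Once this vanishing is known, applying the cohomological functor \(N\) to the triangle yields \(N(s)\cong N(FGs)\) naturally in \(s\), so that the counit-induced natural transformation \(N\to N\circ FG\) is an isomorphism, and \(\blank\circ G\) and \(\blank\circ F\) are mutually inverse triangulated equivalences.
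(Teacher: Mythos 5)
Your proposal is correct but takes a genuinely different route to the hard direction than the paper does. The paper also builds \(\blank\circ F\) and \(\blank\circ G\) via Theorem~\ref{IntroductionSunZhangTheorem}, but then finishes abstractly: it shows \(\blank\circ F\dashv\blank\circ G\) on the completions, that \(\blank\circ G\) is fully faithful (inherited from \(F\) being so), and that \(\blank\circ F\) is \emph{conservative} (Lemma~\ref{ConservatismWithRespectToPreimageMetric}); then Corollary~\ref{FullyFaithfulAdjointConservative} upgrades this to a mutual equivalence. That conservativity argument is the technical bulk: it works with an arbitrary \(E=\moco E_*\) whose image vanishes, pushes the Cauchy sequence through \(G\), invokes Lemma~\ref{ZeroDirectLimitOfBalls} and Lemma~\ref{CharacterisationOfCompactlySupportedZeros} to characterise null module colimits, and pulls back along the essentially surjective \(G\) using Lemma~\ref{PreimageUnderEssentiallySurjectiveFunctorCommutesWithDirectSummands}. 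You instead verify both round trips directly. The easy direction \(\blank\circ GF\cong\mathrm{id}\) is the same in spirit; your novelty is the cone argument for \(\blank\circ FG\cong\mathrm{id}\): the counit triangle \(FGs\to s\to C_s\to\Sigma FGs\) has \(GC_s=0\) (since \(G\epsilon_s\) is an isomorphism), so \(C_s\) and all its shifts lie in \(\Ker G\subseteq\bigcap_n G^{-1}(B_n)\); any \(N\in\mathfrak{S}(\mathcal{S})\) kills them, and the long exact sequence gives \(N(\epsilon_s)\) an isomorphism, naturally in \(s\) and in \(N\). This bypasses all the null-sequence machinery and in fact uses only the \emph{weakly} compactly supported condition, which is a real simplification.

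One small inaccuracy worth fixing: your justification of the vanishing step (``follows from Neeman's construction \dots\ approximable by representables up to arbitrarily small subcategories'') gestures at the Cauchy-colimit description, but the Cauchy structure is irrelevant here. The vanishing is immediate from compact support: by Definition~\ref{DefinitionCompletion}, \(N\in\mathfrak{S}(\mathcal{S})\subseteq\mathfrak{C}(\mathcal{S})\subseteq\mathfrak{WC}(\mathcal{S})\), so there is some \(i\) with \(N\in Y\big(G^{-1}(B_i)\big)^{\perp}\); since each \(\Sigma^{j}C_s\in\Ker G\subseteq G^{-1}(B_i)\), Yoneda gives \(N(\Sigma^{j}C_s)=0\). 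State it this way and the argument is airtight.
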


Many of recent articles (\cite{Neeman20}, \cite{SunZhang21}, \cite{BiswasChenRahulParkerZheng24}, \cite{CummingsGratz24}) about metric completions of~triangulated categories work with good metrics only. We finish this paper by a~short comparison of generic metrics and good metrics in Section~\ref{sec:improvements}. We show that every metric has a naturally given improvement which is always a good metric (see Definition~\ref{DefinitionImprovement} and Proposition~\ref{ImprovementIsNotRandom}). A completion of a triangulated category with respect to a given metric then embeds into the completion with respect to the~original metric's improvement as a triangulated subcategory (see Corollary~\ref{ImprovementContainsOriginalCompletion}). 

\subsubsection*{Acknowledgements}
I wish to express my gratitude to my supervisor, Sira Gratz, for introducing me to the topic of metrics on triangulated categories and for her valuable comments regarding the paper. I thank Yaohua Zhang for making me aware of the~content of Remark~\ref{AnAlternativeProof}. I would also like to thank everyone from the ``Categories, Clusters and Completions'' and ``Homological Algebra'' groups at~Aarhus University.
This work was supported by a research grant (VIL42076) from VILLUM FONDEN.

\section{Preliminaries}
\label{sec:preliminaries}

In this section, we recall definitions, theorems and notation related to completions of triangulated categories via metric techniques introduced by Neeman in \cite{Neeman18}, and his follow up survey paper \cite{Neeman20}.

\subsection{Notation and conventions}

\subsubsection{General category theory}
\label{GeneralCategoryTheory}

If we have an adjoint pair \(F,G\) of functors between two categories \(\mathcal{A},\mathcal{B}\), i.e.\ \(F:\mathcal{A}\rightarrow\mathcal{B}\) and \(G:\mathcal{B}\rightarrow\mathcal{A}\), we write it as \(F:\mathcal{A}\rightleftarrows \mathcal{B}:G\). By this notation, we mean that $F$ is the left adjoint of $G$, and $G$ is the right adjoint of~$F$.

For \(\mathcal{C}\) a category, we will simply denote \(C\in\mathcal{C}\) (by an abuse of notation) as a~shortcut for \(C\in\Ob(\mathcal{C})\). All subcategories are considered full. For \(\mathcal{A},\mathcal{B}\) categories, we use the notation \(\mathcal{A}\subseteq\mathcal{B}\) for~denoting that \(\mathcal{A}\) is a subcategory of \(\mathcal{B}\).

Given a natural transformation \(\varphi:G\Rightarrow H\) between two functors \(G,H:\mathcal{B}\rightarrow \mathcal{C}\) with components \(\varphi_B:G(B)\rightarrow H(B)\) for \(B\in\mathcal{B}\) and a functor \(F:\mathcal{A}\rightarrow\mathcal{B}\), we shall denote \(\varphi_{F}:G\circ F\Rightarrow H\circ F\) the natural transformation with components \(\varphi_{F(A)}:GF(A)\rightarrow HF(A)\) for each \(A\in\mathcal{A}\).

\(\Ab\) denotes the category of abelian groups.

\subsubsection{Additive categories}

 Let \(\mathcal{A}\) be an additive category. We denote the category of additive contravariant functors from \(\mathcal{A}\) to \(\Ab\) as \(\Mod\dashmodule \mathcal{A}\), and \(Y:\mathcal{A}\rightarrow\Mod\dashmodule\mathcal{A}\) the Yoneda embedding \(A\mapsto\Hom_{\mathcal{A}}(\blank,A)\).

For a class of objects \(\mathcal{C}\subseteq\Ob(\mathcal{A})\) we denote the following subcategories of~\(\mathcal{A}\):
\begin{itemize}
\item \(\Summ(\mathcal{C})\) the subcategory of \(\mathcal{A}\) consisting of objects isomorphic
to~direct summands of elements from \(\mathcal{C}\),
\item \(\Coprod(\mathcal{C})\) the subcategory of \(\mathcal{A}\) consisting of objects isomorphic
to~coproducts of elements from \(\mathcal{C}\),
\item \(\coprodf(\mathcal{C})\) the subcategory of \(\mathcal{A}\) consisting of objects isomorphic
to~finite coproducts of elements from \(\mathcal{C}\),
\item \(\Add(\mathcal{C})\) the subcategory \(\Summ\big(\Coprod(\mathcal{C})\big)\),
\item \(\addf(\mathcal{C})\) the subcategory \(\Summ\big(\coprodf(\mathcal{C})\big)\),
\item  \(\mathcal{C}^{\perp}\) the subcategory \(\{A\in \mathcal{A}:\forall C\in\mathcal{C},\Hom_{\mathcal{A}}(C,A)=0\}\),
\item \({}^{\perp}\mathcal{C}\) the subcategory \(\{A\in \mathcal{A}:\forall C\in\mathcal{C},\Hom_{\mathcal{A}}(A,C)=0\}\).
\end{itemize}

If \(\mathcal{C}=\{C\}\) for a single object \(C\in\mathcal{C}\), we often omit the brackets and write \(\Summ(C)\) etc.\ instead.

\subsubsection{Triangulated categories}
\label{NotationTriangulatedCategories}
The notational conventions about triangulated categories here are primarily based on \cite{Neeman01} and \cite[Subsection~2.1]{BiswasChenRahulParkerZheng24}.

By default, we will denote the shift functor on a~triangulated category \(\mathcal{T}\) as \(\Sigma\).
The triangulated subcategory of \(\mathcal{T}\) consisting of its compact elements is denoted as~\(\mathcal{T}^c\).
And for subcategories \(\mathcal{A},\mathcal{B}\subseteq\mathcal{T}\) we denote
\(\mathcal{A}*\mathcal{B}\) the subcategory of \(\mathcal{T}\) with objects
\[
\{X\in\mathcal{T}:\exists A\in\mathcal{A},\exists B\in\mathcal{B},\exists \text{ a triangle } A\rightarrow X\rightarrow B\rightarrow \Sigma A \text{ in }\mathcal{T}\}
.\]
A subcategory \(\mathcal{S}\subseteq\mathcal{T}\) is \textit{extension-closed} if \(\mathcal{S}=\mathcal{S}*\mathcal{S}\).

For every object $X$ of a triangulated category \(\mathcal{T}\) and \(N,M\in\Z\) we define a set \(X[N,M]:=\{\Sigma^{-i}X:i\in\Z,N\leq i\leq M\}\).
We also define:
\begin{itemize}
\item
\(X[N,\infty]:=\{\Sigma^{-i}X:i\in\Z,N\leq i\}\), 
\item 
\(X[-\infty,M]:=\{\Sigma^{-i}X:i\in\Z, i\leq M\}\),
\item
\(X[-\infty,\infty]:=\{\Sigma^{-i}X:i\in\Z\}\).
\end{itemize}
The smallest subcategory of \(\mathcal{T}\) closed under extensions and direct summands containing \(X[N,M]\) will be denoted \(\langle X\rangle^{[N,M]}\).

More generally, for a class \(\mathcal{C}\subseteq\Ob(\mathcal{T})\) we define \(\mathcal{C}[N,M]:=\bigcup_{C\in\mathcal{C}}C[N,M]\). We treat the cases \(\mathcal{C}[N,\infty]\), \(\mathcal{C}[-\infty,M]\), \(\mathcal{C}[-\infty,\infty]\), and \(\langle\mathcal{C}\rangle^{[N,M]}\) analogously.
In particular, the notation \(\langle \mathcal{C}\rangle:=\langle \mathcal{C}\rangle^{[-\infty,\infty]}\) stands for \textit{the thick subcategory} of \(\mathcal{T}\) generated by \(\mathcal{C}\).

Assuming that the category \(\mathcal{T}\) has countable coproducts, we define \textit{a homotopy colimit} \(\hoco E_*\) of a directed diagram \(E_*:= E_1\xrightarrow{f_{1,2}} E_2 \xrightarrow{f_{2,3}} \cdots\) in \(\mathcal{T}\) by the~distinguished triangle 
\[
\coprod_{n\in\N} E_n \xrightarrow{\ \varphi\ } \coprod_{n\in\N} E_n \xrightarrow{\quad} \hoco E_* \xrightarrow{\quad} \Sigma \coprod_{n\in\N} E_n
\]
where the first map is given by the infinite matrix
\[
\varphi:=
\left[
\begin{array}{cccc}
\id_{E_1} & 0           & 0           & \cdots \\
-f_{1,2}  & \id_{E_2}   & 0           & \cdots \\
0         & -f_{2,3}    & \id_{E_3}   & \cdots \\[-0.4ex]
\vdots    & \vdots      & \vdots      & \ddots
\end{array}
\right]
.\]

\subsubsection{Homotopy categories and derived categories}
Let \(\mathcal{A}\) be an exact category. We use the following notation for relevant related categories:
\begin{itemize}
    \item \(\HomotopyCategory(\mathcal{A})\) the homotopy category of \(\mathcal{A}\),
    \item \(\derived(\mathcal{A})\) the derived category of \(\mathcal{A}\),
    \item \(\derived^b(\mathcal{A})\) the bounded derived category of \(\mathcal{A}\).
\end{itemize}

\subsubsection{Rings and modules}
By ring, we always mean associative, unitary ring whose operation of multiplication generally does not have to be commutative.
By module, we mean a right module over a given ring unless stated otherwise.
A $K$-algebra over a~field $K$ is a~ring containing $K$ as a subring of its center.

For a ring $R$, we use the following notation for relevant related categories:
\begin{itemize}
    \item \(\ModR\) the category of $R$-modules,
    \item \(\modf\dashmodule R\) the category of finitely generated $R$-modules,
    \item \(\ind\dashmodule R\) the category of finitely generated indecomposable $R$-modules,
    \item \(\derived(R)\) abbreviation for \(\derived(\ModR)\),
    \item \(\derived^b(R)\) abbreviation for \(\derived^b(\ModR)\). 
\end{itemize}

Furthermore, for $R$, $S$ rings and \({}_RM_S\) an $R$-$S$-bimodule we denote
\[\blank\otimes_R^{\mathbf{L}}M:\derived(R)\rightarrow\derived(S) \text{ the left derived functor of }\blank\otimes_RM:\ModR\rightarrow\Mod\dashmodule S\]
and
\[\mathbf{R}\Hom_{S}(M,\blank):\derived(S)\rightarrow\derived(R)\] the right derived functor of \(\Hom_{S}(M,\blank):\Mod\dashmodule S\rightarrow\ModR\).

\subsection{Metrics on triangulated categories}

As explained in \cite[Heuristic~9]{Neeman20}, we are aiming to assign lengths (with values in rational numbers) to morphisms in~a~triangulated theory \(\mathcal{T}\) in such a way that the length is invariant under homotopy cartesian squares, i.e.\ if
 \[
\begin{tikzcd}
    A \arrow{d}{g} \arrow{r}{f} & B \arrow{d}{g'} \\
    C \arrow{r}{f'}                & D               
\end{tikzcd}
\]
is a homotopy cartesian square,
then the morphisms $f$ and $f'$ have equal length.
We recall that the commutative square as above is called \textit{homotopy cartesian} if there exists a distinguished triangle 
\[
A \xrightarrow{\begin{bsmallmatrix}f \\ -g\end{bsmallmatrix}} B\oplus C\xrightarrow{\begin{bsmallmatrix}g' & f'\end{bsmallmatrix}} D \xrightarrow{\quad} \Sigma A
.\]
Consequently, we may measure a length of a morphism by looking at its cone, because for every distinguished triangle \(A\xrightarrow{f} B\rightarrow C\rightarrow \Sigma A\) there exists a homotopy cartesian square
 \[
\begin{tikzcd}
    A \arrow{d} \arrow{r}{f} & B \arrow{d} \\
    0 \arrow{r}                & C.               
\end{tikzcd}
\]
In other words, the length of all morphisms is determined by the lengths of~the~zero morphisms \(0\rightarrow C\) for all \(C\in\mathcal{T}\).
This motivates the following definition of a metric on a triangulated category as a decreasing sequence of neighbourhoods of~zero.

This definition of a metric comes from {\cite[Definition 1.2]{Neeman18}}, while the definition of~a~good metric is from {\cite[Definition 10]{Neeman20}}. 

\begin{definition}
\label{DefinitionMetric}
Let $\mathcal{T}$ be a triangulated category.
Let \(\{B_n\}_{n\in\N}\) be a non-increasing chain \(B_1\supseteq B_2 \supseteq B_3\supseteq\ldots\) of subcategories of \(\mathcal{T}\).
We say that \(\{B_n\}_{n\in\N}\) is \textit{a~metric} if \(B_n*B_n=B_n\) and \(0\in B_n\) for all \(n\in\N\).

Furthermore,
we say that a metric \(\{B_n\}_{n\in\N}\) is \textit{a good metric}
if it satisfies \textit{the~rapid decrease condition}, i.e. \(\Sigma^{-1} B_{n+1}\cup B_{n+1}\cup \Sigma B_{n+1}\subseteq B_n\)
for all \(n\in\N\).
\end{definition}

\begin{definition}[{\cite[Definition 1.6]{Neeman18}}]
Let $\mathcal{T}$ be a triangulated category with a~metric \(\{B_n\}_{n\in\N}\).
Let \(E_*:= E_1\xrightarrow{f_{1,2}} E_2 \xrightarrow{f_{2,3}} \cdots\) be a directed diagram in \(\mathcal{T}\).
For \(m'> m\in\N\), we denote the composition \(f_{m,m'}:=f_{m',m'-1}\circ\cdots\circ f_{m+1,m+2}\circ f_{m,m+1}\) and \(f_{m,m}:=\id_{E_m}\).

We say that \(E_*\) is \textit{a Cauchy sequence} (with respect to the metric \(\{B_n\}_{n\in\N}\)) if for all \(i\in\N\) there is an~index \(M\in\N\) such that for every \(m'\geq m\geq M\) the triangle \(E_m\xrightarrow{f_{m,m'}} E_{m'}\rightarrow D_{m,m'}\rightarrow\Sigma E_m\) satisfies \(D_{m,m'} \in B_i\).

We say that \(E_*\) is \textit{a good Cauchy sequence} 
(with respect to the metric \(\{B_n\}_{n\in\N}\))
if for all \(i\in\N\) and \(j\in\Z\) there is an index \(M\in\N\) such that for every \(m'\geq m\geq M\) the~triangle \(E_m\xrightarrow{f_{m,m'}} E_{m'}\rightarrow D_{m,m'}\rightarrow\Sigma E_m\) satisfies \(\Sigma^j D_{m,m'} \in B_i\).
\end{definition}

\begin{remark}
The existence of a metric \(\{B_n\}_{n\in\N}\) on \(\mathcal{T}\) does not, strictly speaking, assign to a morphism \(f:a\rightarrow b\) a precise number to be the length of $f$. Instead, we are merely interpreting \(\cone(f)\in B_n\) for some fixed \(n\in\N\) as~the~length of $f$ being less or equal to \(\frac{1}{n}\).
Since the cone of every isomorphism, the zero object, is contained in the intersection \(\bigcap_{n\in\N}B_n\), all isomorphisms are of length \(\leq\frac{1}{n}\) for all \(n\in\N\). In this sense, we can say that isomorphisms are of zero length.
And Cauchy sequences are sequences of composable morphisms whose lenghts are eventually getting smaller and smaller
(see \cite[Definition 2]{Neeman20}).

The original definition of a good metric \(\{B_n\}_{n\in\N}\) in {\cite[Definition~10]{Neeman20}} contains an~additional assumption that \(B_1=\mathcal{T}\). This was done to ensure that every morphism has a length, concretely length \(\leq1\).
Since we are interested mainly in~Cauchy sequences \(E_*:= E_1\xrightarrow{f_{1,2}} E_2 \xrightarrow{f_{2,3}} \cdots\) anyway, where we impose the~length of~\(E_m\xrightarrow{f_{m,m'}}E_{m'}\) to be small for indices \(m'\geq m\geq M\) for some fixed $M\in\N$ onwards, the requirement of every morphism having lengths \(\leq1\) is not necessary. We can simply skip any finite initial segment of a Cauchy sequence.

Therefore, in this text, we will stick to the variant of the definition of a good metric from \cite[Definition 2.8]{BiswasChenRahulParkerZheng24} where we allow morphisms without any length by~omitting the condition \(B_1=\mathcal{T}\).
This will simplify the notation and formulations of~certain statements and their proofs.
\end{remark}

Similarly to the case of metrics on metric spaces, we can define an equivalence relation on metrics on triangulated categories. 

\begin{definition}[{\cite[Definition 1.2]{Neeman18}}]
    Let $\mathcal{T}$ be a triangulated category and two metrics \(\{B_n\}_{n\in\N}\) and~\(\{C_n\}_{n\in\N}\).
    We say that \(\{C_n\}_{n\in\N}\) is \textit{finer} than (or \textit{a refinement} of) \(\{B_n\}_{n\in\N}\) if for every  \(n\in\N\) there is \(m\in\N\) such that \(C_m\subseteq B_n\).
    
    In~this~case, we also say that \(\{B_n\}_{n\in\N}\) is \textit{coarser} than \(\{C_n\}_{n\in\N}\), and we denote this by~\(\{C_n\}_{n\in\N}\leq\{B_n\}_{n\in\N}\).

    We say that \(\{B_n\}_{n\in\N}\) and \(\{C_n\}_{n\in\N}\) are \textit{equivalent} if
    \[
    \{C_n\}_{n\in\N}\leq\{B_n\}_{n\in\N} \text{\ \ \ and\ \ \ }\{B_n\}_{n\in\N}\leq\{C_n\}_{n\in\N}.
    \]
\end{definition}

\begin{remark}[{\cite[Remark~1.7]{Neeman18}}]
    Equivalent metrics yield the same (good) Cauchy sequences.
\end{remark}

\begin{remark}
\label{EquivalenceDoesntPreserveGoodness}
Equivalence of metrics does not preserve goodness.
Let $\mathbf{D}(K)$ be the~category of graded vector spaces over a field $K$.
Consider a good metric \(\{C_n\}_{n\in\N}\) given by the cohomology functor \mbox{\(H:\mathbf{D}(K)\rightarrow\Mod\dashmodule K\)} via the formula
\[
C_n=\{T\in\mathbf{D}(K):\forall i>-n,H^i(T)=0\}
.\]
This is a particular case of a metric of ``type i)'' from \cite[Example~12]{Neeman20}.

The metric \(\{C_n\}_{n\in\N}\) is indeed good because for each \(n\in\N\) and \(T\in C_{n+1}\), the~property \(H^{i}(T)=0\) for \(i>-n-1\) implies \(H^j\left(\Sigma^{-1}T\right)=0\) whenever \(j>-n\), thus justifying \(\Sigma C_{n+1}\cup C_{n+1}\cup\Sigma^{-1}C_{n+1}\subseteq C_n\).

Define a metric \(\{B_n\}_{n\in\N}\) by the formula \(B_n:=C_{\lceil \frac{n}{2} \rceil}\) for all \(n\in\N\).
This metric is not good since \(\Sigma^{n}K\in B_{2n}=C_{n}\) for all \(n\in\N\) while \(\Sigma^{n-1}K\notin B_{2n-1}=C_{n}\).

We observe that \(\{B_n\}_{n\in\N}\) and \(\{C_n\}_{n\in\N}\) are equivalent, even though \(\{C_n\}_{n\in\N}\) is good, while \(\{B_n\}_{n\in\N}\) is not.
\end{remark}

\subsection{Completions of triangulated categories}

The construction of a completion of a triangulated category \(\mathcal{T}\) resembles the one of a completion of a metric space. We are formally adjoining ``limit points'' of Cauchy sequences - in our case directed colimits of images of Cauchy sequences under the Yoneda embedding to~obtain the~pre-completion \(\mathfrak{L}(\mathcal{T})\) (see Definition~\ref{DefinitionCompletion}).

However, unlike the completion of a metric space, the pre-completion \(\mathfrak{L}(\mathcal{T})\) is not yet what we wanted because it is not known, whether it is, in general, a triangulated category. To~get hold of~the~true completion \(\mathfrak{S}(\mathcal{T})=\mathfrak{C}(\mathcal{T})\cap\mathfrak{L}(\mathcal{T})\), we must restrict ourselves
to those elements of \(\mathfrak{L}(\mathcal{T})\) which are compactly supported. This ensures that the~completion \(\mathfrak{S}(\mathcal{T})\) is triangulated with a shift functor compatible with the~original shift on \(\mathcal{T}\).

The reasoning behind the introduction of compactly supported elements is explained in Remark~\ref{SendsSmallMorphismsToIsomorphisms} and Remark~\ref{Explanation0}.

A more precise definition follows.

\begin{definition}[{\cite[Definition 1.10]{Neeman18}}]
\label{DefinitionCompletion}
Let $\mathcal{T}$ be a triangulated category with a~metric \(\{B_n\}_{n\in\N}\).
We define the following subcategories of \(\Mod\dashmodule\mathcal{T}\) (all of~them with respect to~\(\{B_n\}_{n\in\N}\)):
\begin{enumerate}[label=(\roman*)]
\item \textit{the compactly supported elements} as \(\mathfrak{C}(\mathcal{T}):=\bigcap_{j\in\Z}\bigcup_{i\in\N}Y(\Sigma^j B_i)^{\perp}\),
\item \textit{the weakly compactly supported elements} as \(\mathfrak{WC}(\mathcal{T}):=\bigcup_{i\in\N}Y(B_i)^{\perp}\),
\item \textit{the pre-completion} of \(\mathcal{T}\) as
\[\ \ \ \ \ \ \mathfrak{L}(\mathcal{T}):=\left\{F\in\Mod\dashmodule\mathcal{T}:\exists \text{ a good Cauchy sequence \(E_*\) in \(\mathcal{T}\)}, F\simeq\varinjlim Y(E_*)\right\},\]
\item and \textit{the completion} of \(\mathcal{T}\) as \(\mathfrak{S}(\mathcal{T}):=\mathfrak{C}(\mathcal{T})\cap\mathfrak{L}(\mathcal{T})\).
\end{enumerate}

We also define an autoequivalence \(\Sigma:\Mod\dashmodule\mathcal{T}\rightarrow\Mod\dashmodule\mathcal{T}\) by the formula \([\Sigma F](T):= F(\Sigma^{-1}T)\) for~all \(F\in\Mod\dashmodule\mathcal{T}\) and \(T\in\mathcal{T}\).

A directed diagram
\[
\begin{tikzcd}
a_1 \arrow[d] \arrow[r, "f_1"] & b_1 \arrow[d] \arrow[r, "g_1"] & c_1 \arrow[d] \arrow[r, "h_1"] & \Sigma a_1 \arrow[d] \\
a_2 \arrow[d] \arrow[r, "f_2"] & b_2 \arrow[d] \arrow[r, "g_2"] & c_2 \arrow[d] \arrow[r, "h_2"] & \Sigma a_2 \arrow[d] \\
\vdots & \vdots & \vdots & \vdots
\end{tikzcd}
\]
of distinguished triangles in \(\mathcal{T}\) is said to be \textit{a good Cauchy sequence of distinguished triangles} provided that each of the sequences \(a_*\), \(b_*\), \(c_*\), and \(\Sigma a_*\) is good Cauchy. 

A sequence \(A\xrightarrow{f}B\xrightarrow{g}C\xrightarrow{h}\Sigma A\) in \(\mathfrak{S}(\mathcal{T})\) is then called \textit{a distinguished triangle} if it is isomorphic to the directed colimit of a good Cauchy sequence of distinguished triangles \(a_*\xrightarrow{f_*}b_*\xrightarrow{g_*}c_*\xrightarrow{h_*}\Sigma a_*\)  in~\(\mathcal{T}\) embedded into \(\Mod\dashmodule\mathcal{T}\) via Yoneda's embedding.
\end{definition}

In case that the category \(\mathcal{T}\) is not skeletally small, natural transformations between two functors in \(\Mod\dashmodule\mathcal{T}\) does not have to form a set. However, both of the~categories \(\mathfrak{L}(\mathcal{T})\) and \(\mathfrak{S}(\mathcal{T})\), which we are mainly interested in, consist of countably presented functors only. This makes them genuine categories with small Hom-sets regardless of any set-theoretical issues with \(\mathcal{T}\).

Since we will be often working with directed colimits \(\varinjlim Y(E_*)\) of~(good) Cauchy sequences \(E_*\) from \(\mathcal{T}\) calculated inside the module category \(\Mod\dashmodule\mathcal{T}\), we introduce a~special notation and terminology for such a colimit.  

\begin{definition}[{\cite[Definition, 3.4]{CummingsGratz24}}]
Let \(\mathcal{T}\) be a triangulated category with a~metric. Let \(E_*\) be a~Cauchy sequence in \(\mathcal{T}\). We call \(\varinjlim Y(E_*)\in\mathfrak{L}(\mathcal{T})\) \textit{the module colimit} of \(E_*\) and we denote it as \(\moco E_*\). 
\end{definition}

\begin{remark}[{\cite[Remark~1.11]{Neeman18}}]
Equivalent metrics yield the same (weakly) compactly supported elements and the~same pre-com\-ple\-tion. Consequently, they yield the same completion. 
\end{remark}

\begin{remark}
It holds that \(\Sigma\big(\mathfrak{S}(\mathcal{T})\big)=\mathfrak{S}(\mathcal{T})\). In other words, $\Sigma$ restricts to~an~autoequivalence of~\(\mathfrak{S}(\mathcal{T})\). This is explained in more detail in \cite[Chapter~2]{Neeman18}.
\end{remark}

We finally get to Neeman's result that the completion (as above) of triangulated category is triangulated itself.

\begin{theorem}[{\cite[Theorem 2.11]{Neeman18}}]
\label{NeemansCompletionTheorem}
Let $\mathcal{T}$ be a triangulated category with shift \(\Sigma\) and a metric \(\{B_n\}_{n\in\N}\).
Then the category \(\mathfrak{S}(\mathcal{T})\) together with its autoequivalence \(\Sigma\) and its distinguished triangles from Definition~\ref{DefinitionCompletion} is triangulated. 
\end{theorem}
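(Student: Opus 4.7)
The plan is to verify in turn that $\mathfrak{S}(\mathcal{T})$ is closed under $\Sigma$ and $\Sigma^{-1}$, that it is an additive subcategory of $\Mod\dashmodule\mathcal{T}$, and finally that the class of distinguished triangles from Definition~\ref{DefinitionCompletion} satisfies the axioms \textbf{TR1}--\textbf{TR4}. The essential technical ingredient, from which most of the axioms follow formally, is a lifting lemma asserting that every morphism and every span in $\mathfrak{S}(\mathcal{T})$ can (up to isomorphism) be represented as the module colimit of a morphism/span between good Cauchy sequences. Once this is available, \textbf{TR1}--\textbf{TR4} reduce to the corresponding axioms for $\mathcal{T}$ applied termwise, combined with the fact that the module colimit is exact for directed systems of triangles of this shape.

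First I would establish the preparatory facts. Closure under $\Sigma^{\pm 1}$ is routine: the shift commutes with directed colimits in $\Mod\dashmodule\mathcal{T}$, it sends good Cauchy sequences to good Cauchy sequences (using the rapid decrease condition on the metric whenever one works with good Cauchy sequences), and it preserves the defining intersection of $\mathfrak{C}(\mathcal{T})$. Additivity follows because $\mathfrak{L}(\mathcal{T})$ is closed under finite direct sums (take termwise direct sums of the two good Cauchy sequences) and $\mathfrak{C}(\mathcal{T})$ is closed under direct sums as an intersection of right orthogonals. The zero functor lies in $\mathfrak{S}(\mathcal{T})$ as the module colimit of the zero sequence.

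The heart of the proof is the lifting lemma: given $A,B\in\mathfrak{S}(\mathcal{T})$ with chosen presentations $A\simeq\moco a_*$, $B\simeq\moco b_*$, and a morphism $\varphi\colon A\to B$ in $\Mod\dashmodule\mathcal{T}$, one can find a subsequence of $(a_*)$ and a morphism of good Cauchy sequences $\varphi_*\colon a_*\to b_*$ whose module colimit is $\varphi$. The construction is inductive: by Yoneda, $\varphi$ restricted to the summand $Y(a_n)$ gives an element of $B(a_n)=(\varinjlim Y(b_*))(a_n)$, and the compactly supported condition on $B$ guarantees that for any prescribed level of smallness this element is represented by an actual morphism $a_n\to b_{m(n)}$, unique up to a morphism whose cone is in $B_i$. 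Choosing the tolerances to decrease rapidly lets one glue these lifts into a genuine morphism of good Cauchy sequences. This is the step I expect to be the main obstacle, since it requires carefully balancing the compactly supported condition (which controls the ambiguity of lifts) against the rapid decrease needed for the target sequence of morphisms to itself be a good Cauchy sequence.

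With the lifting lemma in hand, the remaining axioms are straightforward. For \textbf{TR1}, the identity triangle on $A$ is the module colimit of the identity triangles on $a_*$, and an arbitrary morphism $\varphi\colon A\to B$ is completed to a triangle by lifting it to $\varphi_*$ and then taking termwise cones in $\mathcal{T}$; one must check that the resulting cone lies in $\mathfrak{S}(\mathcal{T})$, which amounts to observing that the cone construction preserves good Cauchy sequences (a three-lemma style argument) and that it preserves the compactly supported property (using that $\mathfrak{C}(\mathcal{T})$ is triangulated in $\Mod\dashmodule\mathcal{T}$ against the Yoneda image of $B_i$, as discussed around Remark~\ref{SendsSmallMorphismsToIsomorphisms}). \textbf{TR2} follows by rotating termwise. \textbf{TR3} requires lifting a morphism of triangles in $\mathfrak{S}(\mathcal{T})$ to a morphism of Cauchy triangles, which is a two-variable version of the lifting lemma combined with the classical fill-in axiom applied stagewise; here one exploits the fact that obstructions to filling in live in $B_i$-pieces that are killed by passing to a cofinal subsequence. \textbf{TR4} is proved in the same spirit: lift the two composable morphisms to a composable pair of morphisms between good Cauchy sequences, apply the octahedral axiom termwise in $\mathcal{T}$, and take the module colimit of the resulting octahedron, checking compactness and Cauchyness of all nine entries as above.
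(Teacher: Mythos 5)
This theorem carries no proof in the paper at all: it is imported verbatim as \cite[Theorem~2.11]{Neeman18}, and the text records only one ingredient of Neeman's argument, namely the unique factorisation Lemma~\ref{BasicFactorisationProperty} together with the discussion in Remarks~\ref{SendsSmallMorphismsToIsomorphisms} and~\ref{Explanation0} explaining why compactly supported cohomological functors invert short morphisms. So there is no ``paper's own proof'' to compare against; I can only measure your sketch against Neeman's original argument as reflected in those fragments.

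With that caveat, your outline is broadly faithful to Neeman's strategy: additivity and closure under $\Sigma^{\pm 1}$, then a lifting lemma that realises morphisms between module colimits of good Cauchy sequences by morphisms of (cofinal subsequences of) good Cauchy sequences, then TR1--TR4 termwise. The lifting step you call the ``main obstacle'' is indeed what Lemma~\ref{BasicFactorisationProperty} is designed for: it gives, for $F$ compactly supported and cohomological, the unique factorisation of $Y(E_n)\to F$ through the colimit injection, and combining this with the cofinal-subsequence trick is exactly how Neeman realises morphisms and then fill-ins for TR3 and TR4. One imprecision worth flagging: you invoke ``the rapid decrease condition on the metric'' to argue that $\Sigma$ preserves good Cauchy sequences, but the theorem is stated for an arbitrary metric, and the preservation of good Cauchy sequences under $\Sigma$ needs nothing about the metric at all --- it is built into the definition of a good Cauchy sequence, which already demands $\Sigma^j D_{m,m'}\in B_i$ uniformly in $j$. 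Rapid decrease of the metric is only relevant when one wants ordinary Cauchy sequences to be automatically good (Lemma~\ref{BadIsGood}), which is a separate matter. A second point your sketch elides is that one must first establish uniqueness of the good Cauchy presentation up to the right notion of equivalence before the termwise constructions are well defined on $\mathfrak{S}(\mathcal{T})$; this is handled in \cite{Neeman18} (Lemma~2.6 there) and is a prerequisite for the lifting lemma being usable, not a formal consequence of it.
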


In order to illustrate the importance of compactly supported elements, we state one of the auxiliary lemmas needed in the proof of Theorem~\ref{NeemansCompletionTheorem}. 
Not only is it an essential tool to show that the octahedral axiom holds in \(\mathfrak{S}(\mathcal{T})\) but it is an~interesting statement by itself. It will, in fact, be used again in Section~\ref{sec:CertainCompletions}.

\begin{lemma}[{\cite[Lemma 2.8]{Neeman18}}]
\label{BasicFactorisationProperty}
Let $\mathcal{T}$ be a triangulated category with a metric.
Let \(F\in\mathfrak{C}(\mathcal{T})\) be a cohomological functor on \(\mathcal{T}\) and let \(E=\moco E_*\in\mathfrak{L}(\mathcal{T})\) for~some good Cauchy sequence \(E_*\).
Then there exists \(N\in\N\) such that for all \(n\geq N\) every map \(Y(E_n)\rightarrow F\) in \(\Mod\dashmodule\mathcal{T}\) factors uniquely as \(Y(E_n)\xrightarrow{\varphi} E\rightarrow F\) where \(\varphi\) is the~canonical map into the colimit.
\end{lemma}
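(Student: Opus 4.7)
The plan is to reinterpret the lemma as the bijectivity of a canonical restriction map and then establish it using the long exact sequences coming from the cohomological functor $F$. By Yoneda's lemma together with the fact that $\Hom$ converts colimits in the first argument into limits, one has
\[
\Hom_{\Mod\dashmodule\mathcal{T}}(E, F) \;\cong\; \Hom_{\Mod\dashmodule\mathcal{T}}(\varinjlim Y(E_*), F) \;\cong\; \varprojlim F(E_*),
\]
and under this identification, precomposition with the colimit injection $\varphi_n : Y(E_n) \to E$ corresponds to the canonical projection $\pi_n : \varprojlim F(E_m) \to F(E_n)$. The lemma then amounts to showing that $\pi_n$ is bijective for all sufficiently large $n$.

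To eliminate the obstructions, I would combine the compact support of $F$ with the goodness of the Cauchy sequence $E_*$. Since $F \in \mathfrak{C}(\mathcal{T})$, the clause at $j = 0$ supplies an index $i_* \in \N$ for which $F$ vanishes on $B_{i_*}$. Applying the goodness of $E_*$ twice, once at $(j,i) = (0, i_*)$ and once at $(j,i) = (-1, i_*)$, and taking the larger of the two resulting thresholds, I obtain $N \in \N$ such that for every $m' \geq m \geq N$ both $D_{m, m'} \in B_{i_*}$ and $\Sigma^{-1} D_{m, m'} \in B_{i_*}$, whence $F(D_{m, m'}) = 0 = F(\Sigma^{-1} D_{m, m'})$.

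Now fix $n \geq N$. Applying the contravariant cohomological functor $F$ to the triangle $E_m \xrightarrow{f_{m, m'}} E_{m'} \to D_{m, m'} \to \Sigma E_m$ produces a long exact sequence containing
\[
F(D_{m, m'}) \to F(E_{m'}) \xrightarrow{F(f_{m, m'})} F(E_m) \to F(\Sigma^{-1} D_{m, m'}),
\]
so by the vanishing above $F(f_{m, m'})$ is an isomorphism whenever $m' \geq m \geq N$. For surjectivity of $\pi_n$, given $x_n \in F(E_n)$ I define $x_m := F(f_{n, m})^{-1}(x_n)$ for $m \geq n$ and $x_m := F(f_{m, n})(x_n)$ for $m < n$; compatibility of the resulting family $(x_m)_{m \in \N}$ is a short calculation from the cocycle identity $f_{m, m'} \circ f_{\ell, m} = f_{\ell, m'}$ and the contravariance of $F$. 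For injectivity, if $(x_m) \in \ker \pi_n$ then $x_n = 0$ forces $x_m = F(f_{n, m})^{-1}(0) = 0$ for $m \geq n$, and $x_m = F(f_{m, n})(x_n) = 0$ for $m < n$.

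The only delicate point is keeping the variance straight: since $F$ is contravariant, the connecting object on the right of the long exact sequence is $F(\Sigma^{-1} D_{m, m'})$ rather than $F(\Sigma D_{m, m'})$, and it is precisely this negative shift that forces one to invoke the goodness bound at $j = -1$ alongside $j = 0$. The remainder is bookkeeping with universal properties of colimits in $\Mod\dashmodule\mathcal{T}$.
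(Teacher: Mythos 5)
Your proof is correct and follows the same approach that the paper indicates in Remark~\ref{SendsSmallMorphismsToIsomorphisms}: the decisive step is that the compactly supported cohomological functor $F$ sends sufficiently short transition maps $f_{m,m'}$ to isomorphisms, after which the inverse limit $\varprojlim F(E_*) \cong \Hom(E,F)$ projects isomorphically onto each $F(E_n)$ for $n \geq N$. The only (immaterial) bookkeeping difference is that you invoke goodness of $E_*$ at $j = -1$ to place $\Sigma^{-1}D_{m,m'}$ in $B_{i_*}$, whereas the paper's remark instead passes to a common index $n$ with $F \in Y(B_n)^\perp \cap Y(\Sigma^{-1}B_n)^\perp$ by invoking compact support at both $j = 0$ and $j = -1$.
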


Lemma~\ref{BasicFactorisationProperty} concerns the unique factorisation of a certain morphism to~a~com\-pact\-ly supported cohomological functor $F$ through a morphism from an element $E$ of~the~pre-completion.
As the completion \(\mathfrak{S}(\mathcal{T})=\mathfrak{C}(\mathcal{T})\cap\mathfrak{L}(\mathcal{T})\) by~definition consists of~the~elements of~the~pre-completion \(\mathfrak{L}(\mathcal{T})\) which are compactly supported, it applies to any pair of elements $E$ and $F$ from the completion.

The following two remarks are based on \cite[Definition~4]{Neeman20} and the proof of aforementioned Lemma~\ref{BasicFactorisationProperty} in {\cite[Lemma~2.8]{Neeman18}}.

\begin{remark}
\label{SendsSmallMorphismsToIsomorphisms}
Let $\mathcal{T}$ be a triangulated category with a metric \(\{B_n\}_{n\in\N}\).
Let \(F\in\mathfrak{C}(\mathcal{T})\). If $F$ is a cohomological functor on \(\mathcal{T}\),
then it sends sufficiently small morphisms to isomorphisms.

By the definition of compactly supported elements, we can find \(n\in\N\) such that \(F\in Y(\Sigma^{-1} B_n)^{\perp}\cap Y(B_n)^{\perp}\).
Let \(f:C\rightarrow D\) in \(\mathcal{T}\) be of length at most~\(\frac{1}{n}\),
i.e.\ there exists a triangle \(C\xrightarrow{f} D\rightarrow E\rightarrow\Sigma C\) with \(E\in B_n\).
Applying the cohomological functor $F$ to~this triangle yields a long exact sequence
\[
\cdots \rightarrow F(E) \rightarrow F(D) \xrightarrow{f} F(C) \xrightarrow{} F(\Sigma^{-1}E) \rightarrow \cdots 
\]
of abelian groups.
By Yoneda's lemma it holds that
\[
F(E)\simeq \Hom\big(Y(E),F\big)=0 \text{\ \ \ \ and\ \ \ \ } F(\Sigma^{-1}E)\simeq \Hom\big(Y(\Sigma^{-1}E),F\big)=0
,\]
so $F(f)$ must be an isomorphism thanks to the exactness of our sequence.

This fact is a key step in the proof of Lemma~\ref{BasicFactorisationProperty}, and consequently of the~octahedral axiom for \(\mathfrak{S}(\mathcal{T})\) as well.
\end{remark}

\begin{remark}
\label{Explanation0}
Note that a functor \(F\in\mathfrak{L}(\mathcal{T})\) is automatically cohomological as a~directed colimit of representable functors, which are known to be cohomological.
Therefore, any functor in the completion \(\mathfrak{S}(\mathcal{T})=\mathfrak{C}(\mathcal{T})\cap\mathfrak{L}(\mathcal{T})\) is both compactly supported and cohomological, so it satisfies the property of ``sending small morphisms to~isomorphisms'' from Remark~\ref{SendsSmallMorphismsToIsomorphisms}.
\end{remark}

\subsection{Good extensions}
\label{sub:GoodExtensions}

A good extension is an indispensable tool to explicitly calculate completions of triangulated categories without referring to the functor category. If a triangulated category embeds into another triangulated category with coproducts in such a way that the embedding is a good extension, the completion is then triangle equivalent to a suitable subcategory of the extension.

The following definition is comprised of Notation~3.1, Definition~3.5, and Definition~3.8 from \cite{Neeman18}.
For the definition of a homotopy colimit, we refer to Subsection~\ref{NotationTriangulatedCategories}.

\begin{definition}
\label{DefinitionCompletionGoodExtension}
Let \(\mathcal{S},\mathcal{T}\) be triangulated categories and \(F:\mathcal{S}\rightarrow\mathcal{T}\) a fully faithful, triangulated functor.
We define \textit{the restricted Yoneda functor} \mbox{\(\mathcal{Y}:\mathcal{T}\rightarrow\Mod\dashmodule\mathcal{S}\)} by setting \(T\mapsto\Hom\big(F(\blank),T\big)\).

Suppose that \(\mathcal{S}\) has a metric \(\{B_n\}_{n\in\N}\).
We say that $F$ is \textit{a good extension} (with respect to \(\{B_n\}_{n\in\N}\)) if \(\mathcal{T}\) has countable coproducts
and for every good Cauchy sequence \(E_*\)
the canonical map \(\moco E_*\rightarrow\mathcal{Y}\big( \hoco F(E_*)\big)\) is an isomorphism.

In this setting, we define the following subcategories of \(\mathcal{T}\):
\begin{itemize}[label={},leftmargin=0.5cm]
\item \(\mathfrak{C}'(\mathcal{S}):=\mathcal{Y}^{-1}\big(\mathfrak{C}(\mathcal{S})\big)\),
\item \(\mathfrak{WC}'(\mathcal{S}):=\mathcal{Y}^{-1}\big(\mathfrak{WC}(\mathcal{S})\big)\),
\item \(\mathfrak{L}'(\mathcal{S}):=\left\{T\in\mathcal{T}:\exists \text{ a good Cauchy sequence \(E_*\) in \(\mathcal{S}\) }, T\simeq\hoco F(E_*)\right\}\),
\item \(\mathfrak{S}'(\mathcal{S}):=\mathfrak{C}'(\mathcal{S})\cap\mathfrak{L}'(\mathcal{S})\).
\end{itemize}
\end{definition}

\begin{remark}[{\cite[Observation 3.2]{Neeman18}}]
We can describe the categories \(\mathfrak{C}'(S)\) and \(\mathfrak{WC}'(\mathcal{S})\) explicitly as \(\mathfrak{C}'(S)=\bigcap_{j\in\Z}\bigcup_{i\in\N}F(\Sigma^j B_i)^{\perp}\) and
\(\mathfrak{WC}'(\mathcal{S})=\bigcup_{i\in\N}F(B_i)^{\perp}\).
\end{remark}

We can calculate completions inside good extensions.

\begin{theorem}[{\cite[Theorem 3.15]{Neeman18}}]
\label{TheOnlyComputationalTool}
Let \(\mathcal{S}\) be a triangulated category with a~metric, and \(\mathcal{S}\xhookrightarrow{}\mathcal{T}\) a good extension.
Then \(\mathcal{Y}\restriction_{\mathfrak{S}'(\mathcal{S})}:\mathfrak{S}'(\mathcal{S})\rightarrow\mathfrak{S}(\mathcal{S}) \) is a well-defined triangulated equivalence.
\end{theorem}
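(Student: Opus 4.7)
The plan is to verify four properties of the restricted functor $\mathcal{Y}\restriction_{\mathfrak{S}'(\mathcal{S})}$ in order: well-definedness of the codomain, essential surjectivity, full faithfulness, and compatibility with distinguished triangles.

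Well-definedness and essential surjectivity should follow almost directly from the good extension hypothesis. If $T\in\mathfrak{L}'(\mathcal{S})$ with $T\simeq\hoco F(E_*)$, then the good extension property gives $\mathcal{Y}(T)\simeq\moco E_*\in\mathfrak{L}(\mathcal{S})$, while the definition $\mathfrak{C}'(\mathcal{S})=\mathcal{Y}^{-1}\big(\mathfrak{C}(\mathcal{S})\big)$ handles the compactly supported part, giving $\mathcal{Y}\big(\mathfrak{S}'(\mathcal{S})\big)\subseteq\mathfrak{S}(\mathcal{S})$. Conversely, given $F\in\mathfrak{S}(\mathcal{S})$, choose a good Cauchy sequence $E_*$ with $F\simeq\moco E_*$ and set $T:=\hoco F(E_*)\in\mathcal{T}$, which exists because $\mathcal{T}$ has countable coproducts. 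Then $T\in\mathfrak{L}'(\mathcal{S})$ by construction and $\mathcal{Y}(T)\simeq F$ by the good extension property, which also forces $T\in\mathfrak{C}'(\mathcal{S})$, so $T\in\mathfrak{S}'(\mathcal{S})$ with $\mathcal{Y}(T)\simeq F$.

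The main obstacle is full faithfulness. Fix $T_1,T_2\in\mathfrak{S}'(\mathcal{S})$ and write $T_1\simeq\hoco F(E_*)$ for a good Cauchy sequence $E_*$. The universal property of $\mathcal{Y}(T_1)\simeq\moco E_*$ together with Yoneda identifies $\Hom\big(\mathcal{Y}(T_1),\mathcal{Y}(T_2)\big)$ with the inverse limit $\varprojlim_n\Hom_{\mathcal{T}}\big(F(E_n),T_2\big)$ of compatible families of morphisms $F(E_n)\to T_2$. On the other hand, applying $\Hom_{\mathcal{T}}(\blank,T_2)$ to the Milnor triangle defining $\hoco F(E_*)$ yields a short exact sequence
\[
0\to \varprojlim{}^{1}\Hom_{\mathcal{T}}\big(F(E_n),\Sigma^{-1}T_2\big)\to \Hom_{\mathcal{T}}(T_1,T_2)\to \varprojlim_n\Hom_{\mathcal{T}}\big(F(E_n),T_2\big)\to 0,
\]
and a short chase identifies its second arrow with the map induced by $\mathcal{Y}$. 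Hence $\mathcal{Y}$ is automatically surjective on Hom sets, and is fully faithful precisely when the $\varprojlim{}^{1}$-term vanishes. To kill it, I will apply Lemma~\ref{BasicFactorisationProperty} to the compactly supported cohomological functor $\mathcal{Y}(\Sigma^{-1}T_2)$, which lies in $\mathfrak{C}(\mathcal{S})$ because $\mathfrak{C}(\mathcal{S})$ is shift-invariant by its very definition. The lemma says that for $n$ large the precomposition map $\Hom\big(\mathcal{Y}(T_1),\mathcal{Y}(\Sigma^{-1}T_2)\big)\to\Hom\big(Y(E_n),\mathcal{Y}(\Sigma^{-1}T_2)\big)$ is a bijection, so the transition maps in the tower $\big\{\Hom_{\mathcal{T}}\big(F(E_n),\Sigma^{-1}T_2\big)\big\}_n$ eventually become isomorphisms, and the $\varprojlim{}^{1}$ of such an eventually-constant tower vanishes.

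Finally, to upgrade $\mathcal{Y}\restriction_{\mathfrak{S}'(\mathcal{S})}$ to a triangulated equivalence I will verify that it sends distinguished triangles to distinguished triangles. Every distinguished triangle in $\mathfrak{S}'(\mathcal{S})$ is the homotopy colimit in $\mathcal{T}$ of the $F$-image of a good Cauchy sequence of triangles in $\mathcal{S}$, while every distinguished triangle in $\mathfrak{S}(\mathcal{S})$ is the module colimit of such a sequence by Definition~\ref{DefinitionCompletion}. The good extension property, applied termwise to each of the three good Cauchy sequences making up the rows of the triangle, together with the compatibility of $\mathcal{Y}$ with $\Sigma$ noted in the excerpt, converts one into the other. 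Combined with the previous three steps this yields the required triangulated equivalence.
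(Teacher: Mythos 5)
The paper does not prove this theorem; it quotes it directly from Neeman's paper and reproduces only one technical ingredient, Lemma~\ref{ExtensionFakeYonedaLemma}, so there is no proof in the paper to compare against line by line. Evaluating your proposal on its own terms: the arguments for well-definedness of the codomain and for essential surjectivity are correct and essentially formal. Your full-faithfulness argument is also correct and is the substantive part of the proposal: identifying \(\Hom\big(\mathcal{Y}(T_1),\mathcal{Y}(T_2)\big)\) with \(\varprojlim_n\Hom_{\mathcal{T}}\big(F(E_n),T_2\big)\), exhibiting the \(\varprojlim^1\)-term as the obstruction to injectivity via the Milnor sequence, and killing it by applying Lemma~\ref{BasicFactorisationProperty} to the compactly supported cohomological functor \(\mathcal{Y}(\Sigma^{-1}T_2)\) amounts to a proof of (the relevant case of) Lemma~\ref{ExtensionFakeYonedaLemma}, which is exactly the ingredient the paper flags as the key technical step in Neeman's proof.

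The gap is in the triangulatedness step. You assert that every distinguished triangle in \(\mathfrak{S}'(\mathcal{S})\) is the homotopy colimit in \(\mathcal{T}\) of the \(F\)-image of a good Cauchy sequence of triangles in \(\mathcal{S}\), and treat this as if it were a definition. But the triangulated structure on \(\mathfrak{S}'(\mathcal{S})\) that the theorem is about is the one inherited from \(\mathcal{T}\): a sequence in \(\mathfrak{S}'(\mathcal{S})\) is distinguished precisely when it is a distinguished triangle of \(\mathcal{T}\). Showing that every such triangle can be realised as a homotopy colimit of a Cauchy sequence of triangles is a genuine lifting problem. Given a triangle \(A\to B\to C\to\Sigma A\) in \(\mathcal{T}\) with all three vertices in \(\mathfrak{S}'(\mathcal{S})\), you know \(A\simeq\hoco F(A_*)\), \(B\simeq\hoco F(B_*)\), \(C\simeq\hoco F(C_*)\) for three separate Cauchy sequences, but you must assemble them into a single compatible Cauchy sequence of triangles \(A_n\to B_n\to C_n\to\Sigma A_n\) whose homotopy colimit reproduces the given triangle up to isomorphism, and nothing in your argument does this. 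If instead you intend to define the triangles of \(\mathfrak{S}'(\mathcal{S})\) by transport along \(\mathcal{Y}\), then the theorem becomes nearly tautological after the first three steps, but you would have proved a strictly weaker statement, since the point of the theorem is that the transported structure agrees with the one inherited from \(\mathcal{T}\). That agreement is the substantive content of the final step and is what needs to be supplied.
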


We mention a technical lemma from the proof of the theorem.

\begin{lemma}[{\cite[Lemma 3.13]{Neeman18}}]
\label{ExtensionFakeYonedaLemma}
Let \(\mathcal{S}\) be a triangulated category with a metric, and
\(\mathcal{S}\xhookrightarrow{}\mathcal{T}\) a good extension.
Let \(E\in\mathfrak{L}'(\mathcal{S})\) and \(X\in\mathfrak{C}'(\mathcal{S})\). Then the map
\[
\mathcal{Y}:\Hom_{\mathcal{T}}(E,X)\rightarrow\Hom_{\Mod\dashmodule\mathcal{S}}\big(\mathcal{Y}(E),\mathcal{Y}(X)\big)
\]
is an isomorphism.
\end{lemma}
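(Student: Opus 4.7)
The plan is to reduce both sides of the claimed isomorphism to the same inverse limit $\varprojlim_n \mathcal{Y}(X)(E_n)$. On the target side, the good-extension hypothesis gives $\mathcal{Y}(E) \simeq \moco E_* = \varinjlim_n Y(E_n)$ in $\Mod\dashmodule\mathcal{S}$, and Yoneda combined with the Hom--colim adjunction yields
\[
\Hom_{\Mod\dashmodule\mathcal{S}}\bigl(\mathcal{Y}(E),\mathcal{Y}(X)\bigr) \simeq \varprojlim_n \Hom_{\Mod\dashmodule\mathcal{S}}\bigl(Y(E_n),\mathcal{Y}(X)\bigr) \simeq \varprojlim_n \mathcal{Y}(X)(E_n).
\]

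On the source side, I would apply the cohomological functor $\Hom_{\mathcal{T}}(\blank, X)$ to the telescope triangle $\bigoplus_n F(E_n) \to \bigoplus_n F(E_n) \to E \to \Sigma\bigoplus_n F(E_n)$ (which exists because $\mathcal{T}$ has countable coproducts and $E \simeq \hoco F(E_*)$) and identify $\Hom_{\mathcal{T}}(F(E_n), X) = \mathcal{Y}(X)(E_n)$. The resulting long exact sequence collapses into the Milnor short exact sequence
\[
0 \to \varprojlim_n{}^1 \mathcal{Y}(X)(\Sigma E_n) \to \Hom_{\mathcal{T}}(E, X) \to \varprojlim_n \mathcal{Y}(X)(E_n) \to 0,
\]
and the right-hand surjection is precomposition with the canonical maps $F(E_n) \to E$, i.e.\ exactly the map induced by $\mathcal{Y}$.

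The key step is to show $\varprojlim^1 \mathcal{Y}(X)(\Sigma E_n) = 0$. Since $\mathcal{Y}(X) \in \mathfrak{C}(\mathcal{S}) = \bigcap_{j\in\Z} \bigcup_{i\in\N} Y(\Sigma^j B_i)^\perp$, I can pick $i \in \N$ large enough that $\mathcal{Y}(X) \in Y(B_i)^\perp \cap Y(\Sigma B_i)^\perp$ (take the maximum of the indices produced for $j = 0$ and $j = 1$, using that the chain $\{B_n\}$ is nested). The good Cauchy condition on $E_*$, applied with this $i$ at shifts $j = 0$ and $j = 1$, then yields $M$ such that for all $m' \geq m \geq M$ both $D_{m,m'}$ and $\Sigma D_{m,m'}$ lie in $B_i$. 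Applying the contravariant cohomological functor $\mathcal{Y}(X)$ to the distinguished triangle $\Sigma E_m \to \Sigma E_{m'} \to \Sigma D_{m,m'}$ in $\mathcal{S}$, the two neighbouring terms $\mathcal{Y}(X)(\Sigma D_{m,m'})$ and $\mathcal{Y}(X)(D_{m,m'})$ both vanish, forcing the transition $\mathcal{Y}(X)(\Sigma E_{m'}) \to \mathcal{Y}(X)(\Sigma E_m)$ to be an isomorphism. Thus the tower is eventually constant, its $\varprojlim^1$ vanishes, and the Milnor sequence collapses to the asserted isomorphism.

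The principal obstacle is coordinating the good Cauchy condition (which controls all integer shifts of the cones $D_{m,m'}$) with the compact-support condition on $\mathcal{Y}(X)$ (which only prescribes vanishing at a single shift per integer $j$), so that both required vanishings hold simultaneously beyond a common threshold. This is the single non-formal input of the proof and is precisely the harmony the two definitions were designed to provide; the remainder is a routine verification that the abstract isomorphism produced by the Milnor sequence coincides with the map induced by $\mathcal{Y}$.
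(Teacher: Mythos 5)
Your argument is correct: the Milnor sequence for $\hoco F(E_*)$, together with the identification of the target as $\varprojlim_n \mathcal{Y}(X)(E_n)$ via the good-extension isomorphism and Yoneda, reduces the claim to the vanishing of $\varprojlim^1 \mathcal{Y}(X)(\Sigma E_n)$, which you correctly extract by combining compact support (yielding a single $i$ with $\mathcal{Y}(X)(B_i) = 0$) with the good Cauchy condition at shifts $j=0,1$ to make the tower eventually isomorphic. The paper does not reprove this lemma but cites Neeman directly, and your argument follows the same standard $\lim^1$-vanishing strategy used there.
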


The isomorphism from Lemma~\ref{ExtensionFakeYonedaLemma} enables us to translate the unique factorisation property of Lemma~\ref{BasicFactorisationProperty} into the setting of the good extension. This will be useful again in Section~\ref{sec:CertainCompletions}.

\begin{lemma}
\label{BasicExtensionFactorisationProperty}
Let \(\mathcal{S}\) be a triangulated category with a metric, and \(F:\mathcal{S}\rightarrow{}\mathcal{T}\) a good extension.
Let \(X\in\mathfrak{C}'(\mathcal{S})\) and let \(E=\hoco F(E_*)\in\mathfrak{L}'(\mathcal{T})\) for some good Cauchy sequence \(E_*\).
Then there exists \(N\in\N\) such that for all \(n\geq N\) every map \(F(E_n)\rightarrow X\) in \(\mathcal{T}\) factors uniquely as \(F(E_n)\rightarrow E\rightarrow X\).
\end{lemma}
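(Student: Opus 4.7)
The strategy is to transport the question from \(\mathcal{T}\) into the module category \(\Mod\dashmodule\mathcal{S}\) via the restricted Yoneda functor \(\mathcal{Y}\), apply Lemma~\ref{BasicFactorisationProperty} there, and then translate the resulting factorisation back to \(\mathcal{T}\) using Lemma~\ref{ExtensionFakeYonedaLemma} together with the ordinary Yoneda lemma.

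First, I would collect the identifications needed for the translation. Since \(F\) is fully faithful and triangulated, the functor \(\mathcal{Y}(X) = \Hom_{\mathcal{T}}\big(F(\blank),X\big)\) is cohomological on \(\mathcal{S}\), and the hypothesis \(X\in\mathfrak{C}'(\mathcal{S}) = \mathcal{Y}^{-1}\big(\mathfrak{C}(\mathcal{S})\big)\) places \(\mathcal{Y}(X)\) in \(\mathfrak{C}(\mathcal{S})\). Full faithfulness of \(F\) also yields a natural isomorphism \(\mathcal{Y}\big(F(E_n)\big) \cong Y(E_n)\). Since \(F\) is a good extension, the canonical comparison map is an isomorphism \(\moco E_* \xrightarrow{\sim} \mathcal{Y}(E)\), so \(\mathcal{Y}(E)\in\mathfrak{L}(\mathcal{S})\); moreover, by naturality of this comparison, the colimit injection \(\psi_n : Y(E_n)\to\moco E_*\) in \(\Mod\dashmodule\mathcal{S}\) corresponds under the isomorphism to \(\mathcal{Y}(\varphi_n)\), where \(\varphi_n : F(E_n) \to E\) is the structural morphism of the homotopy colimit.

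Next, I would apply Lemma~\ref{BasicFactorisationProperty} to the compactly supported cohomological functor \(\mathcal{Y}(X)\) and the module colimit \(\moco E_* \cong \mathcal{Y}(E)\). This produces an index \(N\in\N\) such that for every \(n\geq N\), every morphism \(Y(E_n)\to\mathcal{Y}(X)\) in \(\Mod\dashmodule\mathcal{S}\) factors uniquely through \(\psi_n\). The ordinary Yoneda lemma gives the natural identification \(\Hom_{\Mod\dashmodule\mathcal{S}}\big(Y(E_n),\mathcal{Y}(X)\big) \cong \mathcal{Y}(X)(E_n) \cong \Hom_{\mathcal{T}}\big(F(E_n),X\big)\), and analogously with \(\mathcal{Y}(E)\) in place of \(\mathcal{Y}(X)\), while Lemma~\ref{ExtensionFakeYonedaLemma} supplies the bijection \(\mathcal{Y}:\Hom_{\mathcal{T}}(E,X)\xrightarrow{\sim}\Hom_{\Mod\dashmodule\mathcal{S}}\big(\mathcal{Y}(E),\mathcal{Y}(X)\big)\). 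Splicing these bijections together converts the unique factorisation through \(\psi_n\) in the module category into the desired unique factorisation \(F(E_n)\xrightarrow{\varphi_n}E\to X\) in \(\mathcal{T}\), carrying both existence and uniqueness along.

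\textbf{Main obstacle.} The essential content is already present in Lemma~\ref{BasicFactorisationProperty}; the only point requiring care is the identification of \(\psi_n\) with \(\mathcal{Y}(\varphi_n)\) under the good extension isomorphism \(\moco E_*\cong\mathcal{Y}(E)\), which amounts to a naturality check using the universal property of the module colimit against the cocone structure of the homotopy colimit.
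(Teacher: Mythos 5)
Your proposal is correct and follows essentially the same route as the paper's proof: apply Lemma~\ref{BasicFactorisationProperty} to the compactly supported cohomological functor \(\mathcal{Y}(X)\) and the module colimit \(\moco E_*\cong\mathcal{Y}(E)\), then lift the resulting factorisation back to \(\mathcal{T}\) via Lemma~\ref{ExtensionFakeYonedaLemma} (together with the ordinary Yoneda lemma for the first leg). You are more explicit than the paper about the naturality check identifying the colimit injection \(\psi_n\) with \(\mathcal{Y}(\varphi_n)\) under the good-extension isomorphism, but that point is implicit in the paper's argument and your treatment of it is sound.
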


\begin{proof}
Since \(\mathcal{Y}(X)\) is a cohomological functor, Lemma~\ref{BasicFactorisationProperty} gives us an index \(N\in\N\) and~for~each \(n\geq N\) a~unique factorisation of \(\mathcal{Y}\circ F(E_n)\simeq Y(E_n)\rightarrow \mathcal{Y}(X)\) as
\[
\mathcal{Y}\circ F(E_n)\simeq Y(E_n)\rightarrow \moco E_*\simeq\mathcal{Y}(E)\rightarrow \mathcal{Y}(X)
.\]
We then lift the factorisation to \(\mathcal{T}\) using Lemma~\ref{ExtensionFakeYonedaLemma}.
\end{proof}

We finish the section by mentioning the prototypical example of a good extension, i.e.\ the embedding of compact elements \(\mathcal{T}^c\xhookrightarrow{}\mathcal{T}\).

\begin{theorem}[{\cite[Example~3.6]{Neeman18}}]
\label{PrototypicalGoodExtension}
Let \(\mathcal{T}\) be a triangulated category with arbitrary coproducts. Then the inclusion \(\mathcal{T}^c\xhookrightarrow{}\mathcal{T}\) is a good extension with respect to~all metrics on \(\mathcal{T}^c\).
\end{theorem}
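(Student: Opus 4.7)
The plan is to verify each of the three conditions in Definition~\ref{DefinitionCompletionGoodExtension} for the inclusion \(F: \mathcal{T}^c \hookrightarrow \mathcal{T}\). The first two are essentially free: \(F\) is fully faithful and triangulated because \(\mathcal{T}^c\) is by definition a full triangulated subcategory of \(\mathcal{T}\), and \(\mathcal{T}\) has countable coproducts since it is assumed to have arbitrary ones. So the real content lies in showing that for every good Cauchy sequence \(E_*\) in \(\mathcal{T}^c\), the canonical comparison map
\[
\moco E_* \longrightarrow \mathcal{Y}\bigl(\hoco F(E_*)\bigr)
\]
is an isomorphism in \(\Mod\dashmodule\mathcal{T}^c\).

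Since isomorphisms in the functor category \(\Mod\dashmodule\mathcal{T}^c\) can be detected pointwise, I would fix an arbitrary \(C \in \mathcal{T}^c\) and compare the two evaluations. On the left, directed colimits in the module category are computed pointwise, giving
\[
(\moco E_*)(C) = \varinjlim_n \Hom_{\mathcal{T}^c}(C, E_n) = \varinjlim_n \Hom_{\mathcal{T}}(C, E_n),
\]
where the second equality uses fully faithfulness of the inclusion. On the right, \(\mathcal{Y}(\hoco F(E_*))(C) = \Hom_{\mathcal{T}}(C, \hoco F(E_*))\). So the task reduces to verifying that the natural map
\[
\varinjlim_n \Hom_{\mathcal{T}}(C, E_n) \longrightarrow \Hom_{\mathcal{T}}\bigl(C, \hoco F(E_*)\bigr)
\]
is an isomorphism for every compact object \(C\).

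This is the standard fact that compact objects see sequential homotopy colimits as ordinary directed colimits. I would prove it by unrolling the definition of \(\hoco\): it fits into a distinguished triangle
\[
\coprod_{n} E_n \xrightarrow{\; \mathrm{id} - \mathrm{shift}\; } \coprod_{n} E_n \longrightarrow \hoco F(E_*) \longrightarrow \Sigma \coprod_{n} E_n.
\]
Applying the cohomological functor \(\Hom_{\mathcal{T}}(C, -)\) and using compactness of \(C\) (so that \(\Hom_{\mathcal{T}}(C, -)\) commutes with the countable coproducts appearing above) yields an exact sequence of abelian groups in which \(\mathrm{id} - \mathrm{shift}\) on \(\bigoplus_n \Hom_{\mathcal{T}}(C, E_n)\) has trivial kernel (an elementary diagram-chase on finite supports). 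Therefore \(\Hom_{\mathcal{T}}(C, \hoco F(E_*))\) is identified with the cokernel of \(\mathrm{id} - \mathrm{shift}\), which is precisely \(\varinjlim_n \Hom_{\mathcal{T}}(C, E_n)\), and the comparison map is an isomorphism.

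The whole argument is quite short and the only mildly technical point is the injectivity of \(\mathrm{id} - \mathrm{shift}\) on the coproduct, which is a well-known lemma; there is no serious obstacle here beyond carefully unwinding the definitions. Also worth noting: the argument only ever uses that \(\Hom_{\mathcal{T}}(C, -)\) commutes with countable coproducts, so the hypothesis that \(\mathcal{T}\) have arbitrary coproducts is somewhat more than strictly needed, mirroring the corresponding clause in the definition of a good extension.
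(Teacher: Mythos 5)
Your proposal is correct and follows essentially the same route as the paper: the paper simply cites \cite[Lemma~2.8]{Neeman96} for the fact that compact objects take homotopy colimits of sequences to directed colimits of $\Hom$-groups, whereas you unwind that lemma in place by applying $\Hom_{\mathcal{T}}(C,-)$ to the defining triangle of the homotopy colimit and using compactness together with injectivity of $\mathrm{id}-\mathrm{shift}$ on the coproduct. The only further observation in the paper, which your argument also implicitly supports, is that the comparison map is an isomorphism for an \emph{arbitrary} directed diagram $E_*$ (not only a good Cauchy one), which is why the inclusion is a good extension for every metric.
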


\begin{proof}
Let \(E_*:= E_1\xrightarrow{} E_2 \xrightarrow{} \cdots\) be a directed diagram in \(\mathcal{T}^c\).
Then we can use \cite[Lemma~2.8]{Neeman96} to show that the canonical map \(\varinjlim Y(E_*)\rightarrow\mathcal{Y}( \hoco E_*)\) is an~isomorphism regardless~of whether the sequence \(E_*\) is good Cauchy or not. Hence the inclusion is a good extension for every possible metric on \(\mathcal{T}^c\).
\end{proof}

\section{Completions of certain derived categories}
\label{sec:CertainCompletions}

This section is dedicated to calculating all completions of the bounded derived category \(\derived^b(\modf\dashmodule A)\) of finitely generated modules over a hereditary, finite dimensional algebra of finite representation type. Such a category is equal to the category of \(\derived^c(A)\) of the compact elements (equivalently perfect complexes) of the derived category \(\derived(A)\). See for example \cite[Proposition~6.4]{BokstedtNeeman93}. We are using the fact that the subcategory of perfect complexes is equal to  \(\derived^b(\modf\dashmodule A)\) for $A$ of finite global dimension.

Throughout the section, 
we will be always implicitly using Theorem~\ref{TheOnlyComputationalTool} as our computation tool because embedding \(\derived^b(\modf\dashmodule A)\xhookrightarrow{}\derived(A)\) is a good extension for~every metric (see Theorem~\ref{PrototypicalGoodExtension}).
In other words, all the completions mentioned below will be of the form \(\mathfrak{S}'(\mathcal{T})\) in the sense of the notation from Definition~\ref{DefinitionCompletionGoodExtension}.

Firstly, we show that in the case of a hereditary category \(\mathcal{A}\) the completion of~\(\derived(\mathcal{A})^c\) is always bounded in cohomology.
Recall that any object of \(\derived(\mathcal{A})\) for~\(\mathcal{A}\) hereditary is quasi-isomorphic to its cohomology (see for example \cite[Proposition~4.4.15]{Krause21}).

\begin{proposition}
\label{BoundedHereditaryCompletion}
Let \(\mathcal{A}\) be a hereditary Abelian category with coproducts. Consider the~subcategory of perfect complexes \(\derived(\mathcal{A})^c\) of the derived category \(\derived(\mathcal{A})\) of~\(\mathcal{A}\). Let \(\mathcal{S}\) be a triangulated subcategory of~\(\derived(\mathcal{A})^c\).
Then for every metric on \(\mathcal{S}\), we have \(\mathfrak{S}'(\mathcal{S})\subseteq\derived^b(\mathcal{A})\).
\end{proposition}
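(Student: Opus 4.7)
The plan is to fix an arbitrary $T\in\mathfrak{S}'(\mathcal{S})$, write $T\simeq\hoco F(E_*)$ for a good Cauchy sequence $E_*$ in $\mathcal{S}$, and show that only finitely many cohomology groups $H^m(T)$ are non-zero by projecting $T$ onto its cohomology summands. Since $\mathcal{A}$ is hereditary, every object of $\derived(\mathcal{A})$ splits canonically as a direct sum of shifts of its cohomology. Each $E_n\in\mathcal{S}\subseteq\derived(\mathcal{A})^c$ is compact, and an infinite decomposition $E_n\simeq\bigoplus_l\Sigma^{-l}H^l(E_n)$ would force $\mathrm{id}_{E_n}$ to have infinitely many non-zero coordinates through itself, contradicting compactness. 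Hence each $E_n$ has bounded cohomology, concentrated in some finite range $[a_n,b_n]$.

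Next, since $T\in\mathfrak{C}'(\mathcal{S})\cap\mathfrak{L}'(\mathcal{S})$, I would apply Lemma~\ref{BasicExtensionFactorisationProperty} self-referentially, with $X:=T$ and $E:=T$. This yields an index $N\in\N$ such that for every $n\geq N$, precomposition with the canonical structure map $\alpha_n:F(E_n)\to T$ into the homotopy colimit gives a bijection
\[
\Hom(T,T)\longrightarrow\Hom\bigl(F(E_n),T\bigr),\qquad g\longmapsto g\circ\alpha_n.
\]

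Now fix $n=N$ and set $[a,b]:=[a_N,b_N]$. For each $m\in\Z$, the hereditary splitting $T\simeq\bigoplus_l\Sigma^{-l}H^l(T)$ produces a projection $\pi_m:T\to T$ onto the summand $\Sigma^{-m}H^m(T)$, which by construction factors through $\Sigma^{-m}H^m(T)$ itself. Consequently $\pi_m\circ\alpha_N$ factors through an element of $\Hom\bigl(F(E_N),\Sigma^{-m}H^m(T)\bigr)$, and a routine hereditary computation exploiting the finite decomposition of $E_N$ gives
\[
\Hom\bigl(F(E_N),\Sigma^{-m}H^m(T)\bigr)\simeq\Hom_{\mathcal{A}}\bigl(H^m(E_N),H^m(T)\bigr)\oplus\operatorname{Ext}^1_{\mathcal{A}}\bigl(H^{m+1}(E_N),H^m(T)\bigr),
\]
which vanishes whenever both $m$ and $m+1$ lie outside $[a,b]$, that is whenever $m>b$ or $m<a-1$. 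For such $m$ we obtain $\pi_m\circ\alpha_N=0$, and the bijection above forces $\pi_m=0$ in $\End(T)$, so $H^m(T)=0$. Thus the cohomology of $T$ is concentrated in the finite window $[a-1,b]$, giving $T\in\derived^b(\mathcal{A})$.

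The main obstacle is recognising that Lemma~\ref{BasicExtensionFactorisationProperty} is to be invoked self-referentially with $X=E=T$, so that compact support of $T$ translates into a uniform factorisation property for morphisms out of $F(E_N)$; once the single bounded object $E_N$ is in hand, the hereditary hom/ext dictionary bounds all of $T$. A more naive approach would try to identify $H^m(T)$ with a filtered colimit of $H^m(E_n)$ and argue stabilisation, but this would require exactness hypotheses (AB4/AB5) on the coproducts in $\mathcal{A}$ that are not assumed here, whereas the strategy above sidesteps them entirely.
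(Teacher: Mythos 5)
Your proof is correct and takes essentially the same approach as the paper: invoke Lemma~\ref{BasicExtensionFactorisationProperty} self-referentially with $X=E=T$ to obtain the bijection $\Hom(T,T)\simeq\Hom(F(E_N),T)$, then use the hereditary splitting of $T$ into shifted cohomologies and the bounded cohomology of $E_N$ to kill the projections onto far-away degrees. The only cosmetic difference is that the paper bundles the argument into a single idempotent $e$ projecting onto the finite chunk $Q:=\coprod_i\Sigma^{-i}H^i(T)$ and deduces $e=\id_T$ from the uniqueness of the factorisation, whereas you run the same argument coordinate-by-coordinate with one projection $\pi_m$ per degree; these are interchangeable.
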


\begin{proof}
Let \(E\in\mathfrak{S}'(\mathcal{S})\) where \(E=\hoco E_*\) for some Cauchy sequence \(E_*\).
By~Lemma~\ref{BasicExtensionFactorisationProperty}, for every \(X\in\mathfrak{C}'(\mathcal{S})\) there exists \(N\in\N\) such that every map \(E_N\rightarrow X\) in \(\derived(\mathcal{A})\) factors uniquely as \(E_N\rightarrow E\rightarrow X\).
We find such an \(N\in\N\) for~the~choice \(X:=E\in\mathfrak{C}'(\mathcal{S})\).

Suppose that \(E_N\) has cohomology concentrated in degrees \(-n,\ldots,n\) for some \(n\in\N\).
Denote \(Q:=\coprod_{i=-n}^{n+1}\Sigma^{-i}H^i(E)\in\derived^b(\mathcal{A})\).
Since \(\mathcal{A}\) is hereditary, $Q$ is a~direct summand of $E$, so we can consider the~idempotent map \(e:E\rightarrow E\) having $Q$ as its image.

Because there are no morphisms \(E_N\rightarrow \Sigma^iZ\) for every \(Z\in\mathcal{A}\) and every \(i\in\Z\), \(i\notin\{-n,\ldots,n+1\}\),
the canonical map \(E_N\rightarrow E\) factors through $Q$.
But then the~compositions \(E_N\rightarrow E \xrightarrow{\id_E} E\) and \(E_N\rightarrow E \xrightarrow{e} E\) are equal. By the uniqueness of such a factorisation, we get \(e=\id_E\) and \(E\simeq Q\in\derived^b(\mathcal{A})\).
\end{proof}

For the rest of the section, we narrow our attention to finite dimensional algebras.

\begin{lemma}
\label{FinitenessOfIndexingSet}
Let \(A\) be a hereditary finite dimensional $K$-algebra and \(\mathcal{S}\) a triangulated subcategory of~\(\derived^b(\modf\dashmodule A)\).
Let \(i\in\Z\).
Let \(E=\hoco E_*\in\mathfrak{S}'(\mathcal{S})\) for~some good Cauchy sequence \(E_*\) with respect to a fixed metric on~\(\mathcal{S}\).
If \(H^i(E)\) contains \(\bigoplus_{j\in J}M\) as a direct summand for some \(0\neq M\in \modf\dashmodule A\) and some indexing set $J$, then $J$ is finite.
\end{lemma}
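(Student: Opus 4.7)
The plan is to combine the cohomology splitting available in the hereditary setting with the uniqueness of factorisation supplied by Lemma~\ref{BasicExtensionFactorisationProperty} and the compactness of each $E_n$ in $\derived(A)$.

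First I would apply Proposition~\ref{BoundedHereditaryCompletion} to $\mathcal{A}=\Mod\dashmodule A$ to conclude that $E\in\derived^b(\Mod\dashmodule A)$, whence $E$ splits in $\derived(A)$ as $E\simeq\bigoplus_{k\in\Z}H^k(E)[-k]$ because $A$ is hereditary. Writing $H^i(E)=\bigl(\bigoplus_{j\in J}M\bigr)\oplus N$, for each $j\in J$ I would then form the idempotent $e_j\colon E\to E$ that factors through the $j$th copy of $M[-i]$ in the summand $\bigoplus_{j\in J}M[-i]$ of $E$; since $M\neq 0$ each $e_j$ is nonzero.

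Next I would apply Lemma~\ref{BasicExtensionFactorisationProperty} with $X:=E\in\mathfrak{C}'(\mathcal{S})$, obtaining an index $N\in\N$ such that every morphism $E_N\to E$ in $\derived(A)$ factors uniquely through the canonical map $\alpha_N\colon E_N\to E$. Because $A$ has finite global dimension, the opening paragraph of Section~\ref{sec:CertainCompletions} identifies $\derived^b(\modf\dashmodule A)$ with $\derived^c(A)$, so $E_N$ is compact in $\derived(A)$ and
\[
\Hom_{\derived(A)}\Bigl(E_N,\,\bigoplus_{j\in J}M[-i]\Bigr)\;\simeq\;\bigoplus_{j\in J}\Hom_{\derived(A)}\bigl(E_N,M[-i]\bigr).
\]
Composing $\alpha_N$ with the retraction $E\twoheadrightarrow\bigoplus_{j\in J}M[-i]$ therefore yields a map supported on only finitely many indices $j\in J$.

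Assuming for contradiction that $J$ is infinite, for all but finitely many $j\in J$ we would obtain $e_j\circ\alpha_N=0=0\circ\alpha_N$, and the uniqueness clause of Lemma~\ref{BasicExtensionFactorisationProperty} would then force $e_j=0$ as a morphism $E\to E$, contradicting $M\neq 0$. The main obstacle I expect is choosing the correct $X$ in Lemma~\ref{BasicExtensionFactorisationProperty}: taking $X:=E$ is the step that allows uniqueness to transport the vanishing from morphisms out of $E_N$ to morphisms out of $E$ itself, and this transport is exactly what bans infinitely many nontrivial summand-projecting idempotents.
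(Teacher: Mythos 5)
Your proof is correct, but it routes through Lemma~\ref{BasicExtensionFactorisationProperty} differently from the paper. You choose $X:=E$ and derive a uniqueness statement about endomorphisms of $E$; the paper instead observes that $\Sigma^{-i}M$ is itself a direct summand of $E$ (hence compactly supported) and applies the lemma with $X:=\Sigma^{-i}M$. This gives a $K$-linear isomorphism $\Hom_{\derived(A)}(E,\Sigma^{-i}M)\simeq\Hom_{\derived(A)}(E_N,\Sigma^{-i}M)$; the right-hand side is finite-dimensional because both arguments lie in $\derived^b(\modf\dashmodule A)$, while an infinite $J$ would make the left-hand side infinite-dimensional via the projections $\bigoplus_{j\in J}\Sigma^{-i}M\twoheadrightarrow\Sigma^{-i}M$. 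So the paper never needs the compactness-commutes-with-coproducts step nor the idempotents $e_j$ at all: it is a straight dimension count. Your idempotent argument is a valid alternative --- and it nicely isolates exactly why uniqueness of factorisation is the crucial ingredient --- but it introduces two extra moving parts (compactness of $E_N$ used to split $\Hom(E_N,\bigoplus_{j\in J}M[-i])$ as a direct sum, and the construction of the idempotents $e_j$) that the paper's shorter dimension-counting argument avoids. Both approaches rely equally on the hereditary splitting of $E$ into its cohomology, which makes $\bigoplus_{j\in J}\Sigma^{-i}M$ (and hence $\Sigma^{-i}M$) a direct summand of $E$.
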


\begin{proof}
Since $\Sigma^{-i}M$ is a direct summand of $E$, it has to be compactly supported (see~Definition~\ref{DefinitionCompletion}). 
By Lemma~\ref{BasicExtensionFactorisationProperty}, there is \(N\in\N\) such that the map
\[
\Hom_{\derived(A)}\left(\varphi,\Sigma^{-i}M\right):\Hom_{\derived(A)}\left(E,\Sigma^{-i}M\right)\rightarrow\Hom_{\derived(A)}\left(E_N,\Sigma^{-i}M\right)
\]
is a~$K$\=/linear isomorphism, where \(\varphi:E_N\rightarrow E\) is the canonical map.

Because \(E_N,\Sigma^{-i}M\in\derived^b(\modf\dashmodule A)\), the dimension of \(\Hom_{\derived(A)}(E_N,\Sigma^{-i}M)\) is finite.
As \(\bigoplus_{j\in J}\Sigma^{-i}M\) is a direct summand of $E$, we know
\[
\begin{split}
0&\neq\prod_{j\in J}\Hom_{\derived(A)}\left(\Sigma^{-i}M,\Sigma^{-i}M\right)\simeq
\Hom_{\derived(A)}\left(\bigoplus_{j\in J}\Sigma^{-i}M,\Sigma^{-i}M\right) \\
&\subseteq\Hom_{\derived(A)}\left(E,\Sigma^{-i}M\right)\simeq\Hom_{\derived(A)}\left(E_N,\Sigma^{-i}M\right).
\end{split}
\]
If $J$ was infinite, the dimension of \(\Hom_{\derived(A)}(E_N,\Sigma^{-i}M)\) would be infinite, but that is impossible. Hence, the set $J$ is finite.
\end{proof}

We recall the following result of Auslander about modules over artinian rings of~finite representation type. It applies, in particular, to path algebras over a~simply laced Dynkin quiver.

\begin{theorem}[{\cite[Corollary~4.8]{Auslander74}}]
\label{DynkinBigModuleDecomposition}
Let $R$ be an artinian ring of finite representation type. Then every (possibly infinitely generated) $R$-module decomposes as a direct sum of finitely generated indecomposables, i.e.\ \(\Mod\dashmodule R=\Coprod(\ind\dashmodule R)\).    
\end{theorem}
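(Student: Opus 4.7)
My plan is to transport the statement to a decomposition theorem for modules over the Auslander algebra of $R$. First, I would fix a complete set of representatives $\{M_1,\dots,M_n\}$ of $\ind\dashmodule R$, set $U := \bigoplus_{i=1}^n M_i$, and let $\Lambda := \End_R(U)^{\mathrm{op}}$; since each $M_i$ has local endomorphism ring (Fitting's lemma for finite-length modules over the artinian $R$), $\Lambda$ is semiperfect.

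Next, I would invoke the classical projectivization theorem: $\Hom_R(U,-) : \modf\dashmodule R \to \Mod\dashmodule\Lambda$ is fully faithful with essential image the finitely generated projective $\Lambda$-modules, carrying $M_i$ to the indecomposable projective $P_i := \Hom_R(U, M_i)$. Because $U$ is finitely presented over the noetherian ring $R$, the functor $F := \Hom_R(U,-)$ preserves filtered colimits; and since every $R$-module is the filtered colimit of its finitely generated submodules, $F(M)$ is a filtered colimit of finitely generated projective $\Lambda$-modules, hence flat. Using Lazard's theorem on the $\Lambda$-side, $F$ extends to an equivalence between $\Mod\dashmodule R$ and the full subcategory of flat $\Lambda$-modules.

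The theorem then reduces to showing that every flat $\Lambda$-module is a direct sum of the indecomposable projectives $P_1,\dots,P_n$. Granted that every flat $\Lambda$-module is projective, Kaplansky's theorem splits any projective into countably generated summands, and semiperfectness of $\Lambda$ together with uniqueness of projective covers further splits these into direct sums of the $P_i$. Transporting back through $F$ gives $M \cong \bigoplus_{\alpha\in A} M_{i_\alpha}$ as required.

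\textbf{Main obstacle.} The hard part will be establishing that every flat $\Lambda$-module is projective; unwound, this is essentially a reformulation of the theorem itself on $\Lambda$, so the reduction alone is not enough. The decisive new input must come from the combinatorics of $\modf\dashmodule R$: finiteness of its Auslander-Reiten quiver forces $\Lambda$ to be pure-semisimple, and over a pure-semisimple ring flat modules are projective. Verifying pure-semisimplicity directly from finite representation type — for instance by producing, for any $R$-module, a nonzero finitely generated indecomposable direct summand via a minimal right almost split map ending in one of the $M_i$, then iterating transfinitely — is the genuine work that this plan defers to Auslander's original argument.
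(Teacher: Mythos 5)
The paper does not prove this statement; it cites Auslander's original paper and treats it as a black box, so there is no internal proof to compare against.

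Your reduction to the Auslander algebra $\Lambda := \End_R(U)^{\mathrm{op}}$ is sound: $U$ is a generator and finitely presented over the artinian (hence noetherian) $R$, so $\Hom_R(U,-)$ is fully faithful on all of $\Mod\dashmodule R$, commutes with filtered colimits, and together with Lazard's theorem identifies $\Mod\dashmodule R$ with the flat right $\Lambda$-modules. The problem is your proposed ``decisive new input.'' It is \emph{not} true in general that the Auslander algebra of a representation-finite artinian ring is pure-semisimple --- for instance, Auslander algebras of $K[x]/(x^n)$ for moderately large $n$ are of infinite representation type. Pure-semisimplicity of $\Lambda$ would assert that \emph{all} right $\Lambda$-modules decompose into finitely generated ones, which is both stronger than what you need and false; and the AR-theoretic iteration you sketch to verify it actually proves the decomposition for $R$-modules directly, making the detour through $\Lambda$ redundant and the appeal to pure-semisimplicity circular.

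What the reduction actually requires is much weaker and is available: since $U$ has finite length, Fitting's lemma makes $\Lambda/J(\Lambda)$ semisimple, and the Harada--Sai lemma makes $J(\Lambda)$ nilpotent, so $\Lambda$ is \emph{semiprimary}, hence left and right perfect. Bass's Theorem P then gives both that flat right $\Lambda$-modules are projective and that every projective right $\Lambda$-module is a direct sum of the indecomposable projectives $P_i = \Hom_R(U, M_i)$; Kaplansky's theorem is not even needed. Transporting back through the equivalence finishes the proof. This is essentially Auslander's argument, and it is exactly at the step you flagged as ``genuine work'' that your plan reaches for the wrong property ($\Lambda$ pure-semisimple) rather than the right, attainable one ($\Lambda$ perfect).
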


\begin{lemma}
\label{BoundedDynkinCompletion}
Let \(A\) be a hereditary finite dimensional $K$-algebra of finite representation type.
Then for every metric on~\(\derived^b(\modf\dashmodule A)\) we have
\[
\mathfrak{S}'\big(\derived^b(\modf\dashmodule A)\big)\subseteq\derived^b(\modf\dashmodule A)
.\]
\end{lemma}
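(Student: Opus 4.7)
The plan is to combine Proposition~\ref{BoundedHereditaryCompletion} with Auslander's decomposition (Theorem~\ref{DynkinBigModuleDecomposition}) and the finiteness result from Lemma~\ref{FinitenessOfIndexingSet}. The first already gives boundedness of cohomology, and it remains only to verify that each cohomology module is finitely generated.

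Fix \(E \in \mathfrak{S}'(\derived^b(\modf\dashmodule A))\). Since \(A\) is hereditary, Proposition~\ref{BoundedHereditaryCompletion} yields \(E \in \derived^b(\Mod\dashmodule A)\). Using again that \(A\) is hereditary, \(E\) is moreover quasi-isomorphic to \(\bigoplus_{i \in \Z} \Sigma^{-i} H^i(E)\) (as recalled in the opening paragraph of the section), so it suffices to show \(H^i(E) \in \modf\dashmodule A\) for each \(i \in \Z\).

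As \(A\) is an artinian \(K\)-algebra of finite representation type, Theorem~\ref{DynkinBigModuleDecomposition} provides a decomposition
\[
H^i(E) \;\simeq\; \bigoplus_{M \in \ind\dashmodule A} M^{(J_M)}
\]
indexed by (isomorphism classes of) finitely generated indecomposables, of which there are only finitely many by the assumption of finite representation type. For each individual \(M\) with \(J_M \neq \emptyset\), the submodule \(\bigoplus_{j \in J_M} M\) is a direct summand of \(H^i(E)\), so Lemma~\ref{FinitenessOfIndexingSet} forces \(J_M\) to be finite. Hence \(H^i(E)\) is a finite direct sum of finitely generated indecomposables, thus itself finitely generated, and consequently \(E \in \derived^b(\modf\dashmodule A)\).

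There is essentially no obstacle beyond assembling the three earlier results in the right order; the substantive work was done in Proposition~\ref{BoundedHereditaryCompletion} (boundedness via unique factorisation and heredity) and Lemma~\ref{FinitenessOfIndexingSet} (finite multiplicity of a summand via a \(K\)-dimension argument on $\Hom$-spaces). Finite representation type is used only to reduce the \emph{a priori} infinite Auslander index set to a finite one, at which point finite generation is immediate.
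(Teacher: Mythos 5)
Your proof is correct and follows exactly the same route as the paper: Proposition~\ref{BoundedHereditaryCompletion} for boundedness, Theorem~\ref{DynkinBigModuleDecomposition} to decompose each cohomology module into finitely generated indecomposables, and Lemma~\ref{FinitenessOfIndexingSet} to bound the multiplicities. The aside about \(E\) being quasi-isomorphic to the sum of its shifted cohomologies is harmless but unnecessary, since once the cohomology is known to be bounded and finitely generated in each degree, membership in \(\derived^b(\modf\dashmodule A)\) is immediate.
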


\begin{proof}
Since \(A\) is a hereditary ring, Proposition~\ref{BoundedHereditaryCompletion} gives us the inclusion of~categories \(\mathfrak{S}'\big(\derived^b(\modf\dashmodule A)\big)\subseteq\derived^b(\Mod\dashmodule A)\).
Let \(X\in\mathfrak{S}'\big(\derived^b(\modf\dashmodule A)\big)\) and let \(i\in\Z\).
By Theorem~\ref{DynkinBigModuleDecomposition}, we have \(H^i(X)\simeq \bigoplus_{M\in\ind\dashmodule A}M^{(J_M)}\) for some suitable indexing sets \(J_M\).
By Lemma~\ref{FinitenessOfIndexingSet}, the set \(J_M\) is finite for all \(M\in\ind\dashmodule A\), and~\(\ind\dashmodule A\) is also finite.
Hence \(H^i(X)\in\modf\dashmodule A\) and \(X\in\derived^b(\modf\dashmodule A)\).
\end{proof}

\begin{lemma}
\label{IntersectionIsTriangulated}
Let \(\mathcal{T}\) be a triangulated category with a good metric \(\{B_n\}_{n\in\N}\). Then \(\mathcal{B}:=\bigcap_{n=1}^\infty B_n\) is
a~triangulated subcategory of \(\mathcal{T}\).
\end{lemma}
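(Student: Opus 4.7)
The plan is to verify the three standard axioms that characterize a triangulated subcategory of $\mathcal{T}$: it contains the zero object, it is closed under $\Sigma$ and $\Sigma^{-1}$, and it is closed under the third vertex of a distinguished triangle whenever the other two vertices lie in $\mathcal{B}$. All three will follow directly from the two defining properties of a good metric, namely $0\in B_n$ and $B_n*B_n=B_n$ (which make each $B_n$ extension-closed and pointed), together with the rapid decrease condition $\Sigma^{-1}B_{n+1}\cup B_{n+1}\cup\Sigma B_{n+1}\subseteq B_n$.

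First, since $0\in B_n$ for every $n\in\N$, the zero object lies in the intersection $\mathcal{B}$. Next, for closure under $\Sigma^{\pm 1}$, take $X\in\mathcal{B}$; then $X\in B_{n+1}$ for all $n\in\N$, and the rapid decrease condition gives $\Sigma X\in B_n$ and $\Sigma^{-1}X\in B_n$ for every $n$, so $\Sigma^{\pm 1}X\in\mathcal{B}$. This step is where the good metric assumption is genuinely needed; without the rapid decrease condition there would be no reason for $\mathcal{B}$ to be stable under the shift at all.

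For extension closure, suppose $A\to X\to B\to\Sigma A$ is a distinguished triangle in $\mathcal{T}$ with two of its three vertices in $\mathcal{B}$. Fix $n\in\N$. If $A,B\in\mathcal{B}$, then in particular $A,B\in B_n$, and the identity $B_n*B_n=B_n$ forces $X\in B_n$. If instead $A,X\in\mathcal{B}$, I rotate to $X\to B\to\Sigma A\to\Sigma X$; by the closure under $\Sigma$ already established, $\Sigma A\in\mathcal{B}\subseteq B_n$, and together with $X\in B_n$ this places $B$ in $B_n*B_n=B_n$. Symmetrically, if $X,B\in\mathcal{B}$, the rotation $\Sigma^{-1}B\to A\to X\to B$ together with closure under $\Sigma^{-1}$ puts $A$ in $B_n*B_n=B_n$. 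Since $n$ was arbitrary, the third vertex lies in $\mathcal{B}$.

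The argument is essentially mechanical; the only conceptual point to flag is the indispensability of the rapid decrease condition for closure under shifts, which is exactly the feature that distinguishes good metrics from arbitrary ones. This is consistent with the observation in Remark~\ref{EquivalenceDoesntPreserveGoodness} that goodness is a genuine extra requirement, not an invariant of equivalence classes of metrics.
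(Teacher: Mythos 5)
Your proof is correct and follows essentially the same route as the paper: closure under shifts is deduced from the rapid decrease condition, and closure under extensions from $B_n * B_n = B_n$, together yielding a triangulated subcategory. The paper leaves the extension-closure step implicit (each $B_n$ is extension-closed and intersections of extension-closed full subcategories are extension-closed), whereas you spell out the rotation argument case by case; both are fine and amount to the same observation.
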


\begin{proof}
Let \(T\in\mathcal{B}\) and let \(n\in\N\). Then \(T\in B_{n+1}\). Because the metric is good, we have the~inclusion \(\Sigma^{-1}B_{n+1}\cup B_{n+1}\cup \Sigma B_{n+1}\subseteq B_n\). Then \(\Sigma^{\pm1}T\in B_n\). Since $n$ was arbitrary, we get that \(\mathcal{B}\) is closed under shifts.
The subcategory \(\mathcal{B}\) then has to be a~triangulated subcategory as a subcategory closed under shifts and~extensions.
\end{proof}

Recall that a pair \((\mathcal{U},\mathcal{V})\) of subcategories of a triangulated category \(\mathcal{T}\) is called \textit{a~torsion pair} if \(\mathcal{V}\subseteq\mathcal{U}^{\perp}\) and \(\mathcal{U}*\mathcal{V}=\mathcal{T}\) (see \cite[Definition~2.2]{IyamaYoshino08}). In that case, it automatically holds that \(\mathcal{V}=\mathcal{U}^{\perp}\) and \(\mathcal{U}={}^{\perp}\mathcal{V}\).
We will make use of the~following well-known fact that, in our setting of derived categories of representation-finite algebras, every thick subcategory is a part of a torsion pair. 

\begin{lemma}
\label{OrthogonalTriangulatedSubcategory}
Let \(A\) be a hereditary finite dimensional $K$-algebra of finite representation type.
Let \(\mathcal{S}\) be a thick subcategory of \(\derived^b(\modf\dashmodule A)\).
Then there exists a~triangulated subcategory \(\mathcal{U}\) of \(\derived^b(\modf\dashmodule A)\), such that \((\mathcal{U},\mathcal{S})\) is a torsion pair.
\end{lemma}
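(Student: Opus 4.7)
The plan is to take $\mathcal{U}:={}^{\perp}\mathcal{S}$, the left orthogonal of $\mathcal{S}$ computed inside $\derived^{b}(\modf\dashmodule A)$. Left orthogonals of classes of objects are automatically closed under shifts, extensions, and direct summands, so $\mathcal{U}$ is a triangulated (indeed thick) subcategory, as required. The inclusion $\mathcal{S}\subseteq\mathcal{U}^{\perp}$ is immediate, and the content of the lemma is the reverse containment. To establish it, I would show that every $X\in\mathcal{U}^{\perp}$ fits into a distinguished triangle $C\to X\to S\to\Sigma C$ with $S\in\mathcal{S}$ and $C\in{}^{\perp}\mathcal{S}=\mathcal{U}$, from which $X\simeq S\in\mathcal{S}$ will follow.

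First, I would construct a left $\mathcal{S}$-envelope $g\colon X\to S$ explicitly. Since $A$ has finite representation type, only finitely many indecomposable modules $M_{1},\dots,M_{r}$ of $\modf\dashmodule A$ belong to $\mathcal{S}$. Since $A$ is hereditary and $X$ has bounded cohomology, the derived Hom-groups $\Hom_{\derived^{b}(\modf\dashmodule A)}(X,\Sigma^{i}M_{j})$ are nonzero for only finitely many pairs $(j,i)$, and each of them is finite dimensional over $K$ by Hom-finiteness. I would then set
\[S:=\bigoplus_{M_{j}\in\mathcal{S}}\bigoplus_{i\in\mathbb{Z}}(\Sigma^{i}M_{j})^{\dim_{K}\Hom(X,\Sigma^{i}M_{j})},\]
a finite direct sum in $\mathcal{S}$, and let $g\colon X\to S$ evaluate all the available morphisms $X\to\Sigma^{i}M_{j}$ simultaneously. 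Using Krull-Schmidt to decompose an arbitrary $T\in\mathcal{S}$ into shifts of the $M_{j}$'s, every morphism $X\to T$ then factors through $g$.

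Completing $g$ to a distinguished triangle $C\to X\xrightarrow{g}S\to\Sigma C$ and applying $\Hom(-,T)$ for $T\in\mathcal{S}$, the envelope property at all shifts translates to surjectivity of $\Hom(S,\Sigma^{i}T)\to\Hom(X,\Sigma^{i}T)$ for every $i$, and the long exact sequence then forces $\Hom(C,T)=0$; thus $C\in{}^{\perp}\mathcal{S}=\mathcal{U}$. Next, applying $\Hom(C,-)$ to the same triangle and using exactness in the stretch $\Hom(C,\Sigma^{-1}S)\to\Hom(C,C)\to\Hom(C,X)$, both outer terms vanish (the first because $\Sigma^{-1}S\in\mathcal{S}$ and $C\in{}^{\perp}\mathcal{S}$; the second because $X\in\mathcal{U}^{\perp}$ and $C\in\mathcal{U}$), which yields $\Hom(C,C)=0$. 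Hence $\id_{C}=0$, so $C=0$, and the triangle collapses to give $X\simeq S\in\mathcal{S}$.

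The main obstacle will be the construction of the envelope $g$ as a finite direct sum in $\mathcal{S}$. It depends on combining all three hypotheses in the statement: finite representation type of $A$ (which bounds the set of indecomposables in $\mathcal{S}$), the hereditary structure of $A$ (which concentrates the nonvanishing derived Hom-groups into a finite range of shifts for bounded $X$), and Hom-finiteness of $\derived^{b}(\modf\dashmodule A)$ (which makes each individual Hom-group finite dimensional). Without any one of these ingredients the sum defining $S$ could fail to be finite and hence fail to represent an object of $\derived^{b}(\modf\dashmodule A)$, breaking the argument.
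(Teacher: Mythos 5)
Your overall strategy (take $\mathcal{U}:={}^{\perp}\mathcal{S}$, build an $\mathcal{S}$-approximation $g\colon X\to S$ for $X\in\mathcal{U}^{\perp}$, pass to the triangle $C\to X\xrightarrow{g}S\to\Sigma C$, and show $C=0$) is exactly the route the paper takes, just unpacked by hand instead of citing results on preenveloping classes, envelopes, and torsion pairs. The preenvelope construction itself is fine: finite representation type, hereditarity, and Hom-finiteness do combine to make $S$ a finite sum, and Krull--Schmidt gives the factorisation property. The final step (applying $\Hom(C,-)$ to extract $\Hom(C,C)=0$) is also correct once $C\in{}^{\perp}\mathcal{S}$ is known.

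The gap is in the claim that surjectivity of $g^{*}\colon\Hom(S,\Sigma^{i}T)\to\Hom(X,\Sigma^{i}T)$ for all $i$ forces $\Hom(C,T)=0$. From the long exact sequence
\[
\Hom(S,T)\xrightarrow{g^{*}}\Hom(X,T)\to\Hom(C,T)\to\Hom(\Sigma^{-1}S,T)\xrightarrow{(\Sigma^{-1}g)^{*}}\Hom(\Sigma^{-1}X,T)
\]
surjectivity of $g^{*}$ only kills the map $\Hom(X,T)\to\Hom(C,T)$, giving an \emph{injection} $\Hom(C,T)\hookrightarrow\Hom(\Sigma^{-1}S,T)$, and the image of this injection is $\ker\big((\Sigma^{-1}g)^{*}\big)$. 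Surjectivity of $(\Sigma^{-1}g)^{*}$ says nothing about its kernel, so you cannot conclude that this image is zero. And indeed it need not be: your $g$ is only a \emph{pre}envelope, and for a non-minimal preenvelope $C$ will typically have a shift of an object of $\mathcal{S}$ as a direct summand (e.g.\ if one of the chosen basis maps $X\to\Sigma^{i}M_{j}$ factors through another summand of $S$, that copy of $\Sigma^{i}M_{j}$ splits off into $\Sigma^{-1}(\Sigma^{i}M_{j})\subseteq C$), so $\Hom(C,T)\neq0$ for a suitable $T\in\mathcal{S}$.

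What is missing is a passage from the preenvelope $g$ to a minimal preenvelope (an $\mathcal{S}$-envelope), which exists here because $\derived^{b}(\modf\dashmodule A)$ is Hom-finite Krull--Schmidt, followed by a triangulated Wakamatsu-type lemma: for an extension-closed class, the fibre of a (minimal) envelope lies in the left orthogonal. This is precisely what the paper delegates to \cite[Proposition~1.4]{Neeman10} and \cite[Theorem~2.9]{LakingVitoria20} (preenveloping $\Rightarrow$ enveloping) and \cite[Proposition~2.4]{IyamaYoshino08} (envelope gives a torsion pair, hence $C\in{}^{\perp}\mathcal{S}$). With that repair, your argument closes up and agrees with the paper's.
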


\begin{proof}
As $A$ is hereditary and of finite representation type, the thick subcategory \(\mathcal{S}\) is a preenveloping class (even functorially finite) in \(\mathcal{T}\).
By \cite[Proposition~1.4]{Neeman10} and \cite[Theorem~2.9]{LakingVitoria20}, the subcategory \(\mathcal{S}\) is even enveloping.

Then by \cite[Proposition~2.4]{IyamaYoshino08}, the category \(\mathcal{S}\) is a right class of a torsion pair \((\mathcal{U},\mathcal{S})\) in~\(\derived^b(\modf\dashmodule A)\). 
\end{proof}

Finally, we get to the explicit computation of all completions of \(\derived^b(\modf\dashmodule A)\) for~$A$ a hereditary finite dimensional $K$-algebra of finite representation type.
If the~field $K$ is algebraically closed, every such a (connected) algebra $A$ is Morita equivalent to~$KQ$ for some simply laced Dynkin quiver $Q$
(see e.g.\ \cite[pg.\ 243]{AssemSimsonSkowronski06}).

\begin{theorem}
\label{DynkinCompletions}
Let \(A\) be a hereditary finite dimensional $K$-algebra of finite representation type.
Let \(\{B_n\}_{n\in\N}\) be a metric on~the~category \(\derived^b(\modf\dashmodule A)\).
Denote \(\mathcal{B}:=\bigcap_{n=1}^\infty B_n\).
Then
\[
\mathfrak{S}'\big(\derived^b(\modf\dashmodule A)\big)=\derived^b(\modf\dashmodule A)\cap\mathcal{B}[-\infty,\infty]^\perp\subseteq\derived(A)
.\]
Moreover, if \(\{B_n\}_{n\in\N}\) is good, then \(\mathfrak{S}'\big(\derived^b(\modf\dashmodule A)\big)=\derived^b(\modf\dashmodule A)\cap\mathcal{B}^\perp\).
\end{theorem}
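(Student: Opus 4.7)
My plan is to prove the main equality by a pair of inclusions and then derive the ``moreover'' clause as an immediate corollary of Lemma~\ref{IntersectionIsTriangulated}: if $\{B_n\}_{n\in\N}$ is good then $\mathcal{B}$ is already triangulated, hence closed under shifts, so $\mathcal{B}[-\infty,\infty]=\mathcal{B}$ and the two perpendiculars coincide. Throughout I work inside the good extension $\derived^b(\modf\dashmodule A)\hookrightarrow\derived(A)$ from Theorems~\ref{PrototypicalGoodExtension} and~\ref{TheOnlyComputationalTool}.

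\textbf{Forward inclusion.} Any $X\in\mathfrak{S}'\big(\derived^b(\modf\dashmodule A)\big)$ lies in $\derived^b(\modf\dashmodule A)$ by Lemma~\ref{BoundedDynkinCompletion}. Being in $\mathfrak{C}'$ means that for every $j\in\Z$ there is $i_j\in\N$ with $\Hom(\Sigma^j B_{i_j},X)=0$; since $\mathcal{B}\subseteq B_{i_j}$ this immediately gives $\Hom(\Sigma^j B,X)=0$ for every $B\in\mathcal{B}$, and letting $j$ range places $X$ in $\mathcal{B}[-\infty,\infty]^\perp$.

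\textbf{Reverse inclusion.} Take $X\in\derived^b(\modf\dashmodule A)\cap\mathcal{B}[-\infty,\infty]^\perp$. The constant diagram $X\xrightarrow{\id}X\xrightarrow{\id}\cdots$ is a good Cauchy sequence (its cones are zero, which lies in every $B_i$) with homotopy colimit $X$, so $X\in\mathfrak{L}'$. It remains to verify $X\in\mathfrak{C}'$: for each $j\in\Z$ I must find $i$ with $\Hom(\Sigma^j B_i,X)=0$. The key is that finite representation type forces
\[
\mathcal{I}_j(X):=\bigl\{\Sigma^k M_l : k\in\Z,\ l\in\{1,\ldots,r\},\ \Hom(\Sigma^{j+k}M_l,X)\neq 0\bigr\}
\]
to be finite, where $M_1,\ldots,M_r$ enumerate $\ind\dashmodule A$ (via Theorem~\ref{DynkinBigModuleDecomposition}); indeed, $A$ being hereditary with $X$ of bounded cohomology restricts the admissible $k$ to a finite interval. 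Assume toward a contradiction that for every $i$ some $E_i\in B_i$ satisfies $\Hom(\Sigma^j E_i,X)\neq 0$. Krull--Schmidt decomposes $E_i$ into finitely many indecomposables, one of which must lie in $\mathcal{I}_j(X)$; pigeonhole over the finite set $\mathcal{I}_j(X)$ yields a fixed $N^*\in\mathcal{I}_j(X)$ appearing as a direct summand of $E_i\in B_i$ for arbitrarily large $i$. Promoting this to $N^*\in\bigcap_n B_n=\mathcal{B}$ produces a contradiction, since $\Hom(\Sigma^j N^*,X)\neq 0$ violates $X\in\mathcal{B}[-\infty,\infty]^\perp$.

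\textbf{The hardest step.} The delicate point is this final promotion: passing from ``$N^*$ is a direct summand of some $E_i\in B_i$ for infinitely many $i$'' to ``$N^*\in B_i$ for every $i$'' and hence $N^*\in\mathcal{B}$. Definition~\ref{DefinitionMetric} does not require summand-closure of the $B_n$, so this transition is not automatic and is where the representation-theoretic structure of $\derived^b(\modf\dashmodule A)$ must be fully exploited - most cleanly by first replacing $\{B_n\}_{n\in\N}$ by an equivalent summand-closed chain (which produces the same completion) or by recognising $X^\perp\cap\derived^b(\modf\dashmodule A)$ as a thick subcategory via Lemma~\ref{OrthogonalTriangulatedSubcategory} and transferring the summand along the $*$-closure of the $B_i$.
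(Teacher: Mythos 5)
Your forward inclusion, the membership in $\mathfrak{L}'$ via the constant sequence, and the derivation of the ``moreover'' clause from Lemma~\ref{IntersectionIsTriangulated} all match the paper's argument exactly. The contradiction--pigeonhole set-up for $\mathfrak{C}'$ is also a reasonable repackaging of the paper's direct argument, and you have correctly isolated where the genuine content lies. The problem is that you have overshot the target in the hard step: after pigeonholing you do \emph{not} need $N^*\in\mathcal{B}=\bigcap_n B_n$. It suffices to show $N^*\in\Summ\big(\mathcal{B}[-\infty,\infty]\big)$, i.e.\ that some shift of $N^*$ is a direct summand of some object of $\mathcal{B}$, because $\mathcal{B}[-\infty,\infty]^\perp=\Add\big(\mathcal{B}[-\infty,\infty]\big)^\perp$ (a functor killing an object kills its summands), so $\Hom(\Sigma^j N^*,X)\neq 0$ already contradicts $X\in\mathcal{B}[-\infty,\infty]^\perp$. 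This is exactly how the paper arranges the proof: it chooses $n$ (using the finiteness of $\ind\dashmodule A$) so that every indecomposable in $\Summ(B_n)[-k-j,k+j]$ lies in $\Summ\big(\mathcal{B}[-\infty,\infty]\big)$, decomposes $Z\in B_n$ as $P\oplus(\text{irrelevant shifts})$ with $P\in\Add\big(\mathcal{B}[-\infty,\infty]\big)$, and concludes $\Hom(\Sigma^{\pm j}Z,X)=0$ directly, rather than by contradiction. Aiming for $N^*\in\mathcal{B}$ is strictly stronger and is not what is needed.

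Your two proposed fixes for the remaining step do not repair the argument as stated. Replacing $\{B_n\}_{n\in\N}$ by the chain $\{\Summ(B_n)\}_{n\in\N}$ is not innocuous: the intersection changes from $\mathcal{B}=\bigcap_n B_n$ to $\bigcap_n\Summ(B_n)$, and since $\Summ(\bigcap_n B_n)$ and $\bigcap_n\Summ(B_n)$ need not have the same perpendicular, you would be proving the theorem for a \emph{different} right-hand side; the identification you want, namely $\bigcap_n\Summ(B_n)\cap\ind\dashmodule A\subseteq\Summ\big(\mathcal{B}[-\infty,\infty]\big)$, is precisely the content of the step, so the substitution is circular. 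The appeal to Lemma~\ref{OrthogonalTriangulatedSubcategory} is likewise not an argument: that lemma produces a $\mathcal{U}$ with $\mathcal{S}=\mathcal{U}^\perp$ for a thick $\mathcal{S}$, but it gives no mechanism for moving a direct summand of an object of $B_i$ into $\mathcal{B}$ or into $\Summ\big(\mathcal{B}[-\infty,\infty]\big)$. If you reformulate the hard step with the correct (weaker) target $N^*\in\Summ\big(\mathcal{B}[-\infty,\infty]\big)$ and supply a justification for the stabilisation of the finite sets $\Summ(B_n)[-k-j,k+j]\cap\ind\dashmodule A$ inside $\Summ\big(\mathcal{B}[-\infty,\infty]\big)$, your contradiction argument becomes a faithful variant of the paper's proof.
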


\begin{proof}
``$\subseteq$''
The inclusion \(\mathfrak{S}'\big(\derived^b(\modf\dashmodule A)\big)\subseteq\derived^b(\modf\dashmodule A)\) follows from Lem\-ma~\ref{BoundedDynkinCompletion}. If \(X\in\mathfrak{S}'\big(\derived^b(\modf\dashmodule A)\big)\), then $X$ is compactly supported, so consequently for every \(j\in\Z\) there exists \(i\in\N\) with \(X\in \Sigma^jB_i^\perp\). Since \(\Sigma^j\mathcal{B}\subseteq \Sigma^jB_i\), we get that \(X\in\mathcal{B}[-\infty,\infty]^\perp\).

``$\supseteq$''
Let
\(X\in\derived^b(\modf\dashmodule A)\cap\mathcal{B}[-\infty,\infty]^\perp\). Then \(X\) is the homotopy colimit of~the~constant Cauchy sequence \(X\xrightarrow{\id_X}X\xrightarrow{\id_X}\cdots\) of elements of \(\derived^b(\modf\dashmodule A)\), so \(X\in\mathfrak{L}'\big(\derived^b(\modf\dashmodule A)\big)\).

Suppose that the cohomology of $X$ is concentrated in degrees \(-k+1,\ldots,k\) for~some \(k\in\N\).
Fix \(j\in\N_0\).
For any \(M\in\ind\dashmodule A\setminus \Summ(\mathcal{B}[-\infty,\infty])\) and any \(i\in\Z\), there exists \(n\in\N\) satisfying \(M\notin \Sigma^i\Summ(B_n)\).
Since \(j\) and \(k\) are fixed, and there are (up to isomorphism) only finitely many finitely generated indecomposable $A$\=/modules, we can actually find \(n\in\N\) big enough that
\[
M\notin \Summ(B_n)[-k-j,k+j]
\]
holds for~every \(M\in\ind\dashmodule A\setminus \Summ(\mathcal{B}[-\infty,\infty])\).

Let \(Z\in B_n\), and
denote \(P:=\coprod_{i=-k-j}^{k+j}\Sigma^{-i}H^i(Z)\).
By Theorem~\ref{DynkinBigModuleDecomposition}, all cohomologies of $P$ decompose as direct sums of indecomposable finitely generated $A$-modules. This yields \(P\in\Add(\mathcal{B}[-\infty,\infty])\) due to the choice of $n$.
But then
\[
\Hom_{\derived(A)}\left(\Sigma^jZ,X\right)=\Hom_{\derived(A)}\left(\Sigma^jP,X\right)=0
\]
because
\(X\in \mathcal{B}[-\infty,\infty]^{\perp}=\Add(\mathcal{B}[-\infty,\infty])^\perp.\)
Using \(X\in\Add(\mathcal{B}[-\infty,\infty])^\perp\) again, we also get
\[\Hom_{\derived(A)}\left(\Sigma^{-j}Z,X\right)=\Hom_{\derived(A)}\left(\Sigma^{-j}P,X\right)=0
.\]
Hence, we obtain \(X\in \Sigma^{\pm j}B_n^\perp\).

Since $j$ was picked arbitrarily, we conclude
\[
X\in\mathfrak{L}'\big(\derived^b(\modf\dashmodule A)\big)\cap\mathfrak{C}'\big(\derived^b(\modf\dashmodule A)\big)=\mathfrak{S}'\big(\derived^b(\modf\dashmodule A)\big)
.\]

For the proof of the final statement, we further assume that the metric \(\{B_n\}_{n\in\N}\) is good. Then \(\mathcal{B}\) is a triangulated subcategory of \(\derived^b(\modf\dashmodule A)\) by Lemma~\ref{IntersectionIsTriangulated}, so \(\mathcal{B}=\mathcal{B}[-\infty,\infty]\).
The claim now directly follows.
\end{proof}

The main message of Theorem~\ref{DynkinCompletions} is that for any (no matter how chaotic) metric \(\{B_n\}_{n\in\N}\) on~the~category \(\derived^b(\modf\dashmodule A)\), the completion \(\mathfrak{S}'\big(\derived^b(\modf\dashmodule A)\big)\) depends only on the intersection \(\mathcal{B}:=\bigcap_{n=1}^\infty B_n\) of all balls.
To the author's knowledge, other instances of classification results for completions in the literature involve much more well-behaved metrics; e.g.\ metrics determined by aisles (\cite[Theorem~1.1]{CummingsGratz24}), metrics determined by coaisles on discrete cluster categories (\cite[Theorem~1.2]{CummingsGratz24}), or metrics \(\left\{\langle G\rangle^{[-\infty,-n]}\right\}_{n\in\N}\) determined by shifts of a classical generator $G$ of a~triangulated category \(\mathcal{S}\) under a finiteness condition on the~relative finitistic dimension \(\FinitisticDimension(\mathcal{S}^{\opposite},G)<\infty\) (\cite[Theorem~3.11]{BiswasChenRahulParkerZheng24}).

Here, the relative finitistic dimension of a triangulated category \(\mathcal{S}\) at an object \(G\), defined in \cite[Definition~1.3]{BiswasChenRahulParkerZheng24} as
\[
\FinitisticDimension(\mathcal{S},G):=\inf\left\{n\in\N:G[-\infty,-1]^{\perp}\subseteq\Sigma^n\langle G\rangle^{[0,\infty]}\right\}
,\]
is a generalisation of notion (due to Bass~\cite{Bass60})
of the little finitistic dimension
\[
\FinitisticDimension(R):=\sup\left\{\ProjectiveDimension M:M\in\modf\dashmodule R, \ProjectiveDimension M<\infty\right\}
\]
of a ring $R$  in a sense that \(\FinitisticDimension\big(\derived(R)^c,R\big)=\FinitisticDimension(R)\) (see~\cite[Lemma~4.1.5]{BiswasChenRahulParkerZheng24}).

In this context, while the~algebra $A$ from Theorem~\ref{DynkinCompletions} itself serves as a classical (even strong) generator of the~category \(\derived^b(\modf\dashmodule A)\) and satisfies
\[
\FinitisticDimension\left(\derived^b(\modf\dashmodule A)^{\opposite},A\right)<\infty
\text{\quad and\quad}
\mathfrak{S}'\big(\derived^b(\modf\dashmodule A)\big)=\derived^b(\modf\dashmodule A)
\]
with respect to the metric \(\left\{\langle A\rangle^{[-\infty,-n]}\right\}_{n\in\N}\) as in \cite[Theorem~3.11]{BiswasChenRahulParkerZheng24}, our theorem also provides a description of completions with respect to metrics not related to the strong generator - beyond other things, metrics determined by shifts of any (non-generating) object $M$ with \(\FinitisticDimension\left(\derived^b(\modf\dashmodule A)^{\opposite},M\right)=\infty\).

There exist, however, cases where the methods from Theorem~\ref{DynkinCompletions}, \cite{BiswasChenRahulParkerZheng24} and \cite{CummingsGratz24} all apply at the same time. We offer an insight in the following remark.

\begin{remark}
Let \(A\) be a hereditary finite dimensional $K$-algebra of finite representation type, and let \((\mathcal{U},\mathcal{V})\) be a $t$-structure on \(\derived^b(\modf\dashmodule A)\).
The $t$-structure determines an aisle metric \(\{\Sigma^n\mathcal{U}\}_{n\in\N}\) (see \cite[Definition~4.2]{CummingsGratz24}) on \(\derived^b(\modf\dashmodule A)\), and~the~metric is good.
Theorem~\ref{DynkinCompletions} then yields
\[
\mathfrak{S}'\big(\derived^b(\modf\dashmodule A)\big)=\left(\bigcap_{n=1}^\infty \Sigma^n\mathcal{U}\right)^\perp=\bigcup_{n=1}^\infty\Sigma^{-n}\mathcal{V}\subseteq\derived^b(\modf\dashmodule A)
\]
giving us a specific instance of a general result \cite[Theorem~1.1]{CummingsGratz24} about completions with respect to aisle metrics being unions of shifts of~coaisles.

In particular, if the $t$-structure \((\mathcal{U},\mathcal{V})\) is bounded below, we have \(\bigcap_{n=1}^\infty \Sigma^n\mathcal{U}=0\) and~\(\mathfrak{S}'\big(\derived^b(\modf\dashmodule A)\big)=\bigcup_{n=1}^\infty\Sigma^{-n}\mathcal{V}=\derived^b(\modf\dashmodule A)\) as noticed in \cite[Corollary~3.6]{BiswasChenRahulParkerZheng24} and implicitly proven in \cite[Theorem~5.1]{BiswasChenRahulParkerZheng23} in the first version of said paper \cite{BiswasChenRahulParkerZheng24}.
\end{remark}

\begin{corollary}
\label{ThickSubcategories}
Let \(A\) be a hereditary finite dimensional $K$-algebra of finite representation type.
Then completions of \(\derived^b(\modf\dashmodule A)\) are precisely the thick subcategories of \(\derived^b(\modf\dashmodule A)\).
\end{corollary}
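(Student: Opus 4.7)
The plan is to prove the two inclusions separately, relying almost entirely on Theorem~\ref{DynkinCompletions} and Lemma~\ref{OrthogonalTriangulatedSubcategory}. Recall a thick subcategory is a triangulated subcategory closed under direct summands.

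For the inclusion ``completions are thick'', I would take an arbitrary metric \(\{B_n\}_{n\in\N}\) on \(\derived^b(\modf\dashmodule A)\) and set \(\mathcal{B}:=\bigcap_{n=1}^\infty B_n\). Theorem~\ref{DynkinCompletions} identifies the completion with \(\derived^b(\modf\dashmodule A)\cap\mathcal{B}[-\infty,\infty]^\perp\). The class \(\mathcal{B}[-\infty,\infty]\) is closed under shifts by construction, so its right orthogonal in \(\derived(A)\) is a triangulated subcategory closed under summands (standard fact about orthogonals to shift-stable classes). Intersecting with the thick subcategory \(\derived^b(\modf\dashmodule A)\) therefore still yields a thick subcategory.

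For the converse ``thick subcategories are completions'', let \(\mathcal{S}\subseteq\derived^b(\modf\dashmodule A)\) be thick. By Lemma~\ref{OrthogonalTriangulatedSubcategory}, I can pick a triangulated subcategory \(\mathcal{U}\subseteq\derived^b(\modf\dashmodule A)\) with \(\mathcal{S}=\mathcal{U}^\perp\). My candidate metric is the constant one, \(B_n:=\mathcal{U}\) for all \(n\in\N\). Since \(\mathcal{U}\) is triangulated, \(0\in\mathcal{U}\), \(\mathcal{U}*\mathcal{U}=\mathcal{U}\), and \(\Sigma^{\pm1}\mathcal{U}\subseteq\mathcal{U}\), so this is a good metric. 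Obviously \(\mathcal{B}=\bigcap_{n=1}^\infty B_n=\mathcal{U}\), and the good-metric clause of Theorem~\ref{DynkinCompletions} gives
\[
\mathfrak{S}'\bigl(\derived^b(\modf\dashmodule A)\bigr)=\derived^b(\modf\dashmodule A)\cap\mathcal{U}^\perp=\mathcal{S},
\]
where the final equality uses \(\mathcal{S}=\mathcal{U}^\perp\subseteq\derived^b(\modf\dashmodule A)\) already.

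There is essentially no hard step here; the entire content of the corollary is packaged into the preceding lemmas. The only thing to be slightly careful about is verifying that the constant-sequence metric satisfies the definition (including the rapid decrease condition for goodness), which is immediate from \(\mathcal{U}\) being triangulated. Everything else is bookkeeping with the formula from Theorem~\ref{DynkinCompletions}.
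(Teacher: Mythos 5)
Your proof is correct and follows essentially the same path as the paper: the forward inclusion is read off from the explicit orthogonal formula in Theorem~\ref{DynkinCompletions} (noting that $\mathcal{B}[-\infty,\infty]$ is shift-stable, so its right orthogonal is thick), and the reverse inclusion uses Lemma~\ref{OrthogonalTriangulatedSubcategory} together with the constant good metric $\{\mathcal{U}\}_{n\in\N}$. Your extra remark that the constant sequence satisfies the rapid decrease condition because $\mathcal{U}$ is shift-closed is a helpful detail the paper leaves implicit.
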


\begin{proof}
Let \(\mathfrak{S}'\big(\derived^b(\modf\dashmodule A)\big)\) be a completion with respect to a metric \(\{B_n\}_{n\in\N}\).
If we denote \(\mathcal{B}:=\bigcap_{n=1}^\infty B_n\), by Theorem~\ref{DynkinCompletions} we get
\[
\mathfrak{S}'\big(\derived^b(\modf\dashmodule A)\big)=\derived^b(\modf\dashmodule A)\cap\mathcal{B}[-\infty,\infty]^\perp
.\]
In other words, the completion \(\mathfrak{S}'\big(\derived^b(\modf\dashmodule A)\big)\) is Hom-orthogonal to the sus\-pen\-sion closed subcategory \(\mathcal{B}[-\infty,\infty]\) of \(\derived^b(\modf\dashmodule A)\). Then \(\mathfrak{S}'\big(\derived^b(\modf\dashmodule A)\big)\) must necessarily be a triangulated subcategory of~\(\derived^b(\modf\dashmodule A)\), which is closed under direct summands. Hence, it is a thick subcategory.

Conversely, let \(\mathcal{S}\) be a thick subcategory of \(\derived^b(\modf\dashmodule A)\).
Lemma~\ref{OrthogonalTriangulatedSubcategory} gives us a~triangulated subcategory \(\mathcal{U}\subseteq\derived^b(\modf\dashmodule A)\) such that \((\mathcal{U},\mathcal{S})\) is a torsion pair, implying \(\mathcal{S}=\mathcal{U}^{\perp}\).
The constant sequence of subcategories \(\{\mathcal{U}\}_{n\in N}\) is a good metric on \(\derived^b(\modf\dashmodule A)\).
By Theorem~\ref{DynkinCompletions}, the~completion with respect to this metric is \(\mathcal{S}\).
\end{proof}

\begin{example}
Let $K$ be a field. There are only two completions of \(\derived^b(\modf\dashmodule K)\), namely \(\derived^b(\modf\dashmodule K)\) and $0$.

There are only five completions of \(\derived^b(\modf\dashmodule KA_2)\), namely all the thick subcategories \(\derived^b(\modf\dashmodule KA_2)\), \(\langle\simple(1)\rangle\), \(\langle\simple(2)\rangle\), \(\langle\PP(2)\rangle\), and $0$ (see e.g.\ \cite[Example~5.1.3]{GratzStevenson23}).
\end{example}

We finish the section by a non-example.
\begin{example}
\label{HereditaryNonExample}
Theorem~\ref{DynkinCompletions} does not hold for a general hereditary ring. Let us set \(\mathcal{T}:=\derived^b(\modf\dashmodule\Z)\) and let \(\mathcal{S}\) be the triangulated subcategory of \(\mathcal{T}\) generated by finitely generated torsion abelian groups. We consider the constant metric \(\{\mathcal{S}\}_{n\in\N}\) on \(\mathcal{T}\). We claim that \(\mathfrak{S}'(\mathcal{T})=\langle\Q\rangle\subseteq\derived(\Mod\dashmodule\Z)\).
Thanks to Proposition~\ref{BoundedHereditaryCompletion}, we know so far that \(\mathfrak{S}'(\mathcal{T})\subseteq\derived^b(\Mod\dashmodule\Z)\).

Let \(E=\hoco E_*\in\mathfrak{S}'(\mathcal{T})\) for some good Cauchy sequence $E_*$ in~\(\mathcal{T}\). Let \(i\in\Z\) and \(M:=H^i(E)\in\Mod\dashmodule\Z\). Then \(\Sigma^{-i}M\) is compactly supported as a~direct summand of \(E\in\mathfrak{C}'(\mathcal{T})\). Hence \(\Hom_{\derived(\Mod\dashmodule\Z)}(\mathcal{S},\Sigma^{-i}M)=0\). We infer that \(\Hom_{\Z}\big(\bigoplus_{n=2}^\infty\Z/n\Z,M\big)\simeq0\simeq\Ext^1_{\Z}\big(\bigoplus_{n=2}^\infty\Z/n\Z,M\big)\), so $M$ is torsion-free and~injective (by Baer's criterion \cite[Theorem~1]{Baer40}, see also \cite[Lemma~18.3]{AndersonFuller92} or \cite[Proposition~1.2]{Trlifaj96}). Therefore \(M\simeq \Q^{(J)}\) for some indexing set $J$.

To show that $J$ is finite, we use an argument analogous to the proof of Lemma~\ref{FinitenessOfIndexingSet}.
Since \(\Sigma^{-i}\Q\in\derived(\Mod\dashmodule\Z)\) is compactly supported, Lemma~\ref{BasicExtensionFactorisationProperty} gives us \(N\in\N\) such that the map
\[
\Hom_{\derived(\Mod\dashmodule\Z)}\left(\varphi,\Sigma^{-i}\Q\right):\Hom_{\derived(\Mod\dashmodule\Z)}\left(E,\Sigma^{-i}\Q\right)\rightarrow\Hom_{\derived(\Mod\dashmodule\Z)}\left(E_N,\Sigma^{-i}\Q\right)
\]
is an abelian group isomorphism (so a bijection), where \(\varphi:E_N\rightarrow E\) is the canonical map.
But \(\Hom_{\derived(\Mod\dashmodule\Z)}\left(E_N,\Sigma^{-i}\Q\right)\) is at most countable as a set. If $J$ was infinite, we would get that
\[\Hom_{\derived(\Mod\dashmodule\Z)}\left(E,\Sigma^{-i}\Q\right)\simeq\Hom_{\derived(\Mod\dashmodule\Z)}\left(\Sigma^{-i}\Q^{(J)},\Sigma^{-i}\Q\right)\simeq \Q^J\]
is uncountable, which is impossible. Hence $J$ is finite.

So far we have shown that \(\mathfrak{S}'(\mathcal{T})\subseteq\langle\Q\rangle\). To prove the equality, it is enough to~observe that \(\Q\in\mathfrak{L}'(\mathcal{T})\).
We look at the following diagram in \(\modf\dashmodule\Z\)
\[
C_*:= \Z \xhookrightarrow{2\cdot\blank} \Z \xhookrightarrow{3\cdot\blank} \Z \xhookrightarrow{4\cdot\blank} \Z \xhookrightarrow{} \cdots
\]
with \(\varinjlim C_*=\Q\) in \(\Mod\dashmodule\Z\).
We have for all \(i\in\N\) that \(\Coker(i\cdot\blank)\) is a torsion group, so if we view \(\Z\) as a complex concentrated in degree $0$ and interpret $C_*$ as a~diagram in~\(\mathbf{D}^b(\modf\dashmodule \Z)\), then \(C_*\) is Cauchy with \(\hoco C_*=\Q\) in \(\derived(\Mod\dashmodule\Z)\). We conclude \(\mathfrak{S}'(\mathcal{T})=\langle\Q\rangle\).
\end{example}

\section{Functors between completions}
\label{sec:InducedFunctors}

This section is divided into three parts. In Subsection~\ref{sub:InducedFunctorsOnCompletions}, we elaborate on the~theory of induced functors on completions from \cite{SunZhang21}.
Subsection~\ref{sec:PreimagesImages} contains the~main new result of this section (Theorem~\ref{CalculatingCompletionsElsewhere}) - we introduce the preimage and image metrics on triangulated categories and use them to induce equivalences between their completions.
We give an application of the result for categories with Serre functors in Subsection~\ref{sub:SerreFunctors}.

\subsection{Induced functors on completions}
\label{sub:InducedFunctorsOnCompletions}

In this subsection, we recall one of the~main results from \cite{SunZhang21} by Sun and Zhang, namely Theorem~1.1 (later stated as Theorem~4.3). They prove that an adjoint pair of triangulated functors between triangulated categories equipped with metrics induces a triangulated functor between the respective completions, assuming that the adjoint functors are, in some sense, ``continuous'' with respect to the metrics. The proper term used by Sun and Zhang is that of a compression functor.

Since the original theorem deals with good metrics only, we reprove it here in~the~full generality of (potentially) non-good metrics. 
The proofs in \cite{SunZhang21} require only minimal adjustments to include the stronger hypothesis, but we repeat them for the~convenience of~the~reader.

\begin{definition}[{\cite[Definition~4.1]{SunZhang21}}]
Let $\mathcal{T}$, \(\mathcal{S}\) be triangulated categories with metrics \(\{B_n\}_{n\in\N}\) and \(\{C_n\}_{n\in\N}\), respectively. We say that a triangulated functor \(F:\mathcal{T}\rightarrow\mathcal{S}\) is \textit{a compression} if for every \(n\in\N\) there exists \(m\in\N\) such that \(F(B_m)\subseteq C_n\).    
\end{definition}

\begin{remark}
Let \(\{B_n\}_{n\in\N}\) and \(\{C_n\}_{n\in\N}\) be metrics on a triangulated category \(\mathcal{T}\). Notice that \(\{B_n\}_{n\in\N}\leq\{C_n\}_{n\in\N}\) if and only if the identity functor \(\id:\mathcal{T}\rightarrow\mathcal{T}\) from \(\mathcal{T}\) equipped with the metric \(\{B_n\}_{n\in\N}\) to \(\mathcal{T}\) equipped with the metric \(\{C_n\}_{n\in\N}\) is a compression.
\end{remark}

\begin{lemma}[{\cite[Lemma~4.2]{SunZhang21}}]
\label{CompressionsPreserveCauchySequences}
Let $\mathcal{T}$, \(\mathcal{S}\) be triangulated categories with metrics \(\{B_n\}_{n\in\N}\) and \(\{C_n\}_{n\in\N}\), respectively, and let \(F:\mathcal{T}\rightarrow\mathcal{S}\) be a compression. Then $F$ preserves (good) Cauchy sequences. 
\end{lemma}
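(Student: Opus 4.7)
The plan is to unpack the definitions of Cauchy sequence, good Cauchy sequence, and compression, and combine them via the fact that a triangulated functor sends distinguished triangles to distinguished triangles and commutes (up to natural isomorphism) with shifts.

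Let $E_*: E_1 \xrightarrow{f_{1,2}} E_2 \xrightarrow{f_{2,3}} \cdots$ be a Cauchy sequence in $\mathcal{T}$ and consider the diagram $F(E_*)$ in $\mathcal{S}$. First I would fix $i \in \mathbb{N}$. Since $F$ is a compression, I pick some $k \in \mathbb{N}$ with $F(B_k) \subseteq C_i$. Applying the Cauchy condition for $E_*$ to this chosen index $k$, I obtain $M \in \mathbb{N}$ such that for every $m' \geq m \geq M$ the cone $D_{m,m'}$ of $f_{m,m'} : E_m \rightarrow E_{m'}$ lies in $B_k$. Since $F$ is triangulated, $F(D_{m,m'})$ is a cone of $F(f_{m,m'})$, hence the corresponding cone in $\mathcal{S}$ is isomorphic to $F(D_{m,m'}) \in F(B_k) \subseteq C_i$. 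Using that each $C_n$ is closed under isomorphism (which follows from $C_n * C_n = C_n$ together with $0 \in C_n$), this cone lies in $C_i$, so $F(E_*)$ is Cauchy.

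For the good case, I would fix both $i \in \mathbb{N}$ and $j \in \mathbb{Z}$. Picking $k$ as before with $F(B_k) \subseteq C_i$, the good Cauchy property applied to the pair $(k, j)$ supplies $M \in \mathbb{N}$ so that $\Sigma^j D_{m,m'} \in B_k$ for all $m' \geq m \geq M$. Since $F$ commutes with shifts up to natural isomorphism, $\Sigma^j F(D_{m,m'}) \simeq F(\Sigma^j D_{m,m'}) \in F(B_k) \subseteq C_i$, and closure of $C_i$ under isomorphism finishes the argument.

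There is no substantive obstacle; the only subtlety is observing that the $C_n$ are isomorphism-closed (as a consequence of $C_n = C_n * C_n$ and $0 \in C_n$), which is needed because the cone of a morphism is defined only up to isomorphism and because $F$ commutes with $\Sigma$ only up to natural isomorphism. Everything else is a direct chase through the definitions.
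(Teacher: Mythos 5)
Your proof is correct and follows essentially the same approach as the paper: pick $k$ with $F(B_k)\subseteq C_i$, apply the (good) Cauchy condition of $E_*$ at index $k$, and use that $F$ is triangulated and commutes with shifts to conclude. The paper only writes out the good case and notes the other is analogous, whereas you spell out both and explicitly flag the isomorphism-closure of the $C_n$, but the argument is identical in substance.
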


\begin{proof}
We prove the statement only for good Cauchy sequences, because the other case is analogous.

Let \(E_*:= E_1\rightarrow E_2 \rightarrow \cdots\) be a good Cauchy sequence in \(\mathcal{T}\).
We check that its image \(F(E_*):= F(E_1)\rightarrow F(E_2) \rightarrow \cdots\) is a good Cauchy sequence in~\(\mathcal{S}\).
Let \(i\in\N\) and \(j\in\Z\). We find \(k\in\N\) such that \(F(B_k)\subseteq C_i\). We find \(M\in\N\) such that for~every \(m'\geq m\geq M\) the triangle \(E_m\rightarrow E_{m'}\rightarrow D_{m,m'}\rightarrow\Sigma E_m\) satisfies \(\Sigma^jD_{m,m'}\in B_k\).
We have that \(F(E_m)\rightarrow F(E_m')\rightarrow F(D_{m,m'})\rightarrow\Sigma F(E_m)\) is also a~triangle because $F$ is a triangulated functor. Since \(\Sigma^jD_{m,m'}\in B_k\), it holds that \(F(\Sigma^jD_{m,m'})\simeq\Sigma^jF(D_{m,m'})\in C_i\). Thus, the sequence \(F(E_*)\) is good Cauchy.
\end{proof}

\begin{lemma}[{\cite[Theorem~4.3, pt.\ 2]{SunZhang21}}]
\label{CompressionsPreserveCompactlySupportedElements}
Let $\mathcal{T}$, \(\mathcal{S}\) be triangulated categories with metrics \(\{B_n\}_{n\in\N}\) and \(\{C_n\}_{n\in\N}\), respectively, and let \(F:\mathcal{T}\rightarrow\mathcal{S}\) be a compression.
Then \(\blank\circ F:\Mod\dashmodule\mathcal{S}\rightarrow\Mod\dashmodule\mathcal{T}\) induces functors
\[
\blank\circ F:\mathfrak{C}({\mathcal{S}})\rightarrow\mathfrak{C}(\mathcal{T})
\text{\ \ \ \ and \ \ \ }
\blank\circ F:\mathfrak{WC}({\mathcal{S}})\rightarrow\mathfrak{WC}(\mathcal{T})
.\]
\end{lemma}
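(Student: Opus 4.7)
The plan is to unfold the definitions of $\mathfrak{C}$ and $\mathfrak{WC}$ as intersections and unions of Yoneda-orthogonals, translate the orthogonality condition pointwise via Yoneda's lemma, and then exploit the compression hypothesis together with the fact that a triangulated functor commutes with $\Sigma$.

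First, I would record the following observation for the sake of clarity: for any class of objects $\mathcal{D}\subseteq\mathcal{S}$ and any $G\in\Mod\dashmodule\mathcal{S}$, Yoneda's lemma gives $G\in Y(\mathcal{D})^\perp$ if and only if $G(D)=0$ for every $D\in\mathcal{D}$. Note also that the precomposition $\blank\circ F$ is well defined as a functor $\Mod\dashmodule\mathcal{S}\rightarrow\Mod\dashmodule\mathcal{T}$ because $F$ is additive.

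For the case of weakly compactly supported elements, I would fix $G\in\mathfrak{WC}(\mathcal{S})=\bigcup_{i\in\N} Y(C_i)^\perp$ and pick $i\in\N$ with $G(C_i)=0$. Since $F$ is a compression, there exists $k\in\N$ with $F(B_k)\subseteq C_i$, and then $(G\circ F)(B_k)\subseteq G(C_i)=0$. This shows $G\circ F\in Y(B_k)^\perp\subseteq\mathfrak{WC}(\mathcal{T})$.

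For the case of compactly supported elements, I would fix $G\in\mathfrak{C}(\mathcal{S})=\bigcap_{j\in\Z}\bigcup_{i\in\N} Y(\Sigma^j C_i)^\perp$ and an arbitrary $j\in\Z$. Choose $i\in\N$ such that $G(\Sigma^j C_i)=0$, and then use the compression hypothesis to find $k\in\N$ with $F(B_k)\subseteq C_i$. Since $F$ is triangulated, $F$ commutes with $\Sigma$ up to a natural isomorphism, hence $F(\Sigma^j B_k)\subseteq \Sigma^j C_i$, and therefore $(G\circ F)(\Sigma^j B_k)=0$. As $j\in\Z$ was arbitrary, $G\circ F\in \bigcap_{j\in\Z}\bigcup_{k\in\N} Y(\Sigma^j B_k)^\perp=\mathfrak{C}(\mathcal{T})$.

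There is no real obstacle here; the statement is essentially definitional once one observes that compression transports the vanishing conditions in the correct direction and that shifts are preserved by the triangulated functor $F$. The only mild subtlety is keeping track of the fact that $\blank\circ F$ naturally contravariantly transports vanishing on subcategories from $\mathcal{S}$ to $\mathcal{T}$, so that a single index $k$ suffices to witness the vanishing for $\Sigma^j B_k$ for each fixed $j$.
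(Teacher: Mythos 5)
Your proof is correct and follows essentially the same route as the paper: fix $j$, use compact support of $G$ to find $i$, use the compression hypothesis to find $k$, and use that $F$ commutes with $\Sigma$ to conclude. The only cosmetic differences are that you translate the orthogonality condition into pointwise vanishing via Yoneda up front rather than writing out the chain of $\Hom$-isomorphisms inline, and you spell out both the $\mathfrak{C}$ and $\mathfrak{WC}$ cases, whereas the paper does $\mathfrak{C}$ and notes the other is analogous.
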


\begin{proof}
We prove the statement only for compactly supported elements, because the~other case is analogous.

Let \(G\in\mathfrak{C}(\mathcal{S})=\bigcap_{j\in\Z}\bigcup_{i\in\N}Y(\Sigma^j C_i)^{\perp}\). Let \(j\in\Z\). We find \(i,k\in\N\) such that \(G\in Y(\Sigma^j C_i)^{\perp}\) and \(F(B_k)\subseteq C_i\). Let \(X\in B_k\).
By Yoneda's lemma, we have
\[
\begin{split}
\Hom\left(Y(\Sigma^j X),G\circ F\right)&\simeq G\left(F(\Sigma^jX)\right)\simeq\Hom\Big(Y\left(F(\Sigma^jX)\right),G\Big)\\
 &\simeq\Hom\Big(Y\big(\Sigma^jF(X)\big),G\Big)=0
\end{split}
\]
since \(F(X)\in C_i\) and \(G\in Y(\Sigma^j C_i)^{\perp}\).

Because $j$ was arbitrary, we obtain \(G\circ F\in\mathfrak{C}(\mathcal{T})=\bigcap_{j\in\Z}\bigcup_{i\in\N}Y(\Sigma^j B_i)^{\perp}\).
\end{proof}

\begin{lemma}[{\cite[Theorem~4.3, pt.\ 1]{SunZhang21}}]
\label{CompressionsPreservePrecompletion}
Let \(F:\mathcal{T}\rightleftarrows \mathcal{S}:G\) be adjoint triangulated functors between triangulated categories with metrics \(\{B_n\}_{n\in\N}\) and \(\{C_n\}_{n\in\N}\), respectively.
If $G$ is a compression, then \(\blank\circ F:\Mod\dashmodule\mathcal{S}\rightarrow\Mod\dashmodule\mathcal{T}\) induces a functor \(\blank\circ F:\mathfrak{L}({\mathcal{S}})\rightarrow\mathfrak{L}(\mathcal{T})\).

In particular, if \(E=\moco E_*\in\mathfrak{L}({\mathcal{S}})\) for some good Cauchy sequence \(E_*\) in~\(\mathcal{S}\), then \(E\circ F\simeq\moco G(E_*)\).
\end{lemma}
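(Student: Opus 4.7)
The plan is to show directly that for a good Cauchy sequence $E_*$ in $\mathcal{S}$ the precomposition $(\moco E_*)\circ F$ is isomorphic in $\Mod\dashmodule\mathcal{T}$ to the module colimit of the image sequence $G(E_*)$. Since $G$ is a compression, Lemma~\ref{CompressionsPreserveCauchySequences} immediately tells us that $G(E_*)$ is itself a good Cauchy sequence in $\mathcal{T}$, whence $\moco G(E_*)\in\mathfrak{L}(\mathcal{T})$. So once the isomorphism $(\moco E_*)\circ F \simeq \moco G(E_*)$ is established, the statement follows: the functor $\blank\circ F$ restricts to a map $\mathfrak{L}(\mathcal{S})\to\mathfrak{L}(\mathcal{T})$, and in addition we get the ``in particular'' clause for free.

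The key computation is a pointwise one. Colimits in $\Mod\dashmodule\mathcal{T}$ are computed pointwise in $\Ab$, so for every $T\in\mathcal{T}$,
\[
\bigl((\moco E_*)\circ F\bigr)(T)
= \bigl(\varinjlim Y_{\mathcal{S}}(E_*)\bigr)\bigl(F(T)\bigr)
= \varinjlim \Hom_{\mathcal{S}}\bigl(F(T),E_n\bigr).
\]
Now the adjunction $F\dashv G$ gives natural isomorphisms $\Hom_{\mathcal{S}}(F(T),E_n)\simeq \Hom_{\mathcal{T}}(T,G(E_n))$. These are natural both in $T$ and in the index $n$, so they assemble into an isomorphism of directed systems of presheaves $Y_{\mathcal{S}}(E_*)\circ F \simeq Y_{\mathcal{T}}(G(E_*))$. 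Taking directed colimits and evaluating at $T$ yields
\[
\bigl((\moco E_*)\circ F\bigr)(T)
\simeq \varinjlim \Hom_{\mathcal{T}}\bigl(T,G(E_n)\bigr)
= \bigl(\moco G(E_*)\bigr)(T),
\]
and naturality in $T$ upgrades this to an isomorphism of functors $(\moco E_*)\circ F \simeq \moco G(E_*)$ in $\Mod\dashmodule\mathcal{T}$.

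There is no serious obstacle; the argument is essentially bookkeeping around two ingredients: (i) the pointwise nature of colimits in the functor category $\Mod\dashmodule\mathcal{T}$, which ensures that $\blank\circ F$ preserves directed colimits, and (ii) the adjunction isomorphism $\Hom_{\mathcal{S}}(F(\blank),\blank)\simeq \Hom_{\mathcal{T}}(\blank,G(\blank))$, which converts the diagram of representables $Y_{\mathcal{S}}(E_*)\circ F$ into the diagram $Y_{\mathcal{T}}(G(E_*))$. The mildly delicate point is to verify the naturality of the adjunction isomorphism with respect to the transition maps $E_n\to E_{n+1}$ of the Cauchy sequence, but this is automatic since the unit and counit of the adjunction are natural transformations. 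Invoking Lemma~\ref{CompressionsPreserveCauchySequences} at the end then certifies that $G(E_*)$ is good Cauchy, so that $\moco G(E_*)$ genuinely lies in $\mathfrak{L}(\mathcal{T})$, completing the proof.
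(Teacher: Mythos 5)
Your proof is correct and follows essentially the same route as the paper's: rewrite $(\moco E_*)\circ F$ via the adjunction isomorphism $\Hom_{\mathcal{S}}(F(\blank),E_n)\simeq\Hom_{\mathcal{T}}(\blank,G(E_n))$ to identify it with $\moco G(E_*)$, then invoke Lemma~\ref{CompressionsPreserveCauchySequences} to conclude that $G(E_*)$ is good Cauchy and hence that the result lies in $\mathfrak{L}(\mathcal{T})$. The only difference is that you spell out the pointwise colimit computation and the naturality check more explicitly, which the paper leaves implicit.
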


\begin{proof}
Let \(E\simeq \moco E_*\in\mathfrak{L}({\mathcal{S}})\) be as in the statement of the lemma.
Then by~adjunction, we obtain
\[
\begin{split}
E\circ F&= \left(\moco E_*\right)\circ F\simeq\varinjlim \Hom_{\mathcal{S}}\big(F(\blank),E_*\big)\simeq\varinjlim\Hom_{\mathcal{T}}\big(\blank,G(E_*)\big) \\
&=\moco G(E_*),
\end{split}
\]
where \(\moco G(E_*)\in\mathfrak{L}(\mathcal{T})\) because the compression functor $G$ preserves good Cauchy sequences by Lemma~\ref{CompressionsPreserveCauchySequences}.
\end{proof}

We will heavily rely on the following well-known category-theoretical fact and~its consequences.

\begin{proposition}[{\cite[Theorem~IV.3.1]{Lane98}}]
\label{FullyFaithfulAdjoints}
Let \(F:\mathcal{A}\rightleftarrows \mathcal{B}:G\) be adjoint functors. Let \(\eta:1_{\mathcal{A}}\Rightarrow GF\) and~\(\varepsilon:FG\Rightarrow 1_{\mathcal{B}}\) be the unit and the counit of the adjunction, respectively.
Then $F$ is fully faithful if and only if $\eta$ is a natural isomorphism. In this case, $G$ is essentially surjective.

Dually, $G$ is fully faithful if and only if \(\varepsilon\) is a natural isomorphism. And in this case, $F$ is essentially surjective.
\end{proposition}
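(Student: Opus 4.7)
My plan is to deduce the first biconditional directly from the Yoneda lemma, via the standard identification of the action of $F$ on hom-sets with post-composition by the unit, and then to read off the essential-surjectivity claim and the dual statement as immediate corollaries.

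First I would recall that the adjunction bijection $\Phi_{A,B}\colon \Hom_{\mathcal{B}}(F(A), B) \xrightarrow{\sim} \Hom_{\mathcal{A}}(A, G(B))$ has the explicit form $g \mapsto G(g) \circ \eta_A$. Setting $B := F(A')$ and composing with the action $F_{A,A'}\colon \Hom_{\mathcal{A}}(A, A') \rightarrow \Hom_{\mathcal{B}}(F(A), F(A'))$, the formula for $\Phi$ combined with naturality of $\eta$ at $f\colon A\rightarrow A'$ gives $\Phi_{A, F(A')}\bigl(F_{A,A'}(f)\bigr) = G(F(f)) \circ \eta_A = \eta_{A'} \circ f$. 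Hence the composite $\Phi_{A,F(A')} \circ F_{A,A'}$ is precisely post-composition with $\eta_{A'}$. Since each $\Phi_{A, F(A')}$ is already bijective, $F$ is fully faithful if and only if post-composition with $\eta_{A'}$ is bijective for every $A, A' \in \mathcal{A}$, which by the Yoneda lemma is equivalent to $\eta_{A'}$ being an isomorphism for every $A' \in \mathcal{A}$, i.e.\ to $\eta$ being a natural isomorphism.

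Essential surjectivity of $G$ under the assumption that $\eta$ is a natural isomorphism is then immediate: every $A \in \mathcal{A}$ satisfies $A \simeq GF(A) = G\bigl(F(A)\bigr)$, exhibiting $A$ as an object in the essential image of $G$. The dual statement for $G$ and $\varepsilon$ follows by passing to opposite categories: the adjunction $F \dashv G$ induces an adjunction $G^{\mathrm{op}} \dashv F^{\mathrm{op}}$ between $\mathcal{B}^{\mathrm{op}}$ and $\mathcal{A}^{\mathrm{op}}$ whose unit is $\varepsilon^{\mathrm{op}}$, and a functor is fully faithful (respectively essentially surjective) exactly when its opposite is. No step is a real obstacle; the only non-formal ingredient is the identification $\Phi_{A,F(A')} \circ F_{A,A'} = \eta_{A'} \circ \blank$, which is just the naturality square for $\eta$ evaluated at $f$.
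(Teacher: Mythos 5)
The paper states this proposition as a well-known category-theoretical fact and does not supply its own proof, so there is nothing to compare against. Your argument is correct and is the standard textbook proof (essentially Mac Lane's Theorem IV.3.1): the identification $\Phi_{A,F(A')} \circ F_{A,A'} = \eta_{A'} \circ \blank$ via naturality of $\eta$, the reduction to Yoneda, the direct read-off of essential surjectivity from $A \simeq GF(A)$, and the dualisation through opposite categories are all exactly as expected and contain no gaps.
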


For the next corollary, we recall that a functor \(F:\mathcal{A}\rightarrow\mathcal{B}\) is \textit{conservative} provided that
whenever \(F(f)\) is an~isomorphism in \(\mathcal{B}\) for a morphism $f$ in \(\mathcal{A}\), then $f$ has to be an isomorphism as well.

\begin{corollary}
\label{FullyFaithfulAdjointConservative}
Let \(F:\mathcal{A}\rightleftarrows \mathcal{B}:G\) be adjoint functors.
If one of the functors $F$, \(G\) is conservative and the other one is fully faithful, then \(F\) and $G$ are mutually inverse equivalences of categories.
\end{corollary}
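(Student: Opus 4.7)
The plan is to bootstrap the hypotheses via the triangle identities of the adjunction, invoking Proposition~\ref{FullyFaithfulAdjoints} to conclude that both the unit $\eta:1_{\mathcal{A}}\Rightarrow GF$ and the counit $\varepsilon:FG\Rightarrow 1_{\mathcal{B}}$ are natural isomorphisms. Once this is achieved, $F$ and $G$ are automatically quasi-inverse equivalences of categories.

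I split into the two symmetric cases. First, suppose $F$ is fully faithful and $G$ is conservative. By Proposition~\ref{FullyFaithfulAdjoints}, $\eta$ is already a natural isomorphism, so only $\varepsilon$ needs attention. For an arbitrary object $B\in\mathcal{B}$, the triangle identity $G(\varepsilon_B)\circ\eta_{GB}=\id_{GB}$ together with $\eta_{GB}$ being an isomorphism forces $G(\varepsilon_B)$ to be an isomorphism. Since $G$ is conservative, $\varepsilon_B$ itself is an isomorphism, and as $B$ was arbitrary, $\varepsilon$ is a natural isomorphism.

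The second case, where $G$ is fully faithful and $F$ is conservative, is formally dual. Here Proposition~\ref{FullyFaithfulAdjoints} supplies that $\varepsilon$ is a natural isomorphism. For $A\in\mathcal{A}$, the other triangle identity $\varepsilon_{FA}\circ F(\eta_A)=\id_{FA}$ and the fact that $\varepsilon_{FA}$ is an isomorphism imply $F(\eta_A)$ is an isomorphism; conservativity of $F$ then yields $\eta_A$ is an isomorphism.

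There is no substantial obstacle: the argument is a routine diagram chase once the right triangle identity is written down. The only subtlety is to apply the correct identity in each case (the one that ends up inside the conservative functor), and this is a matter of bookkeeping. After both $\eta$ and $\varepsilon$ are shown to be natural isomorphisms, $F$ and $G$ are mutually inverse equivalences by definition.
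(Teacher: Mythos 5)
Your proof is correct and takes essentially the same approach as the paper: both exploit the relevant triangle identity, invoke Proposition~\ref{FullyFaithfulAdjoints} to identify one factor as an isomorphism, and then apply conservativity to force the remaining factor (the image of the unit or counit under the conservative functor) to be an isomorphism. The paper reduces to a single case by symmetry and concludes via essential surjectivity of $G$ plus uniqueness of adjoints, whereas you treat both cases and conclude directly from the unit and counit both being natural isomorphisms, but this is a purely cosmetic difference in presentation.
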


\begin{proof}
By symmetry, it is sufficient to assume that $F$ is conservative and $G$ is fully faithful.
Let \(\eta:1_{\mathcal{A}}\Rightarrow GF\) and \(\varepsilon:FG\Rightarrow 1_{\mathcal{B}}\) be the unit and the counit of~the~adjunction, respectively.

Let \(A\in\mathcal{A}\).
By the adjunction, the identity map \(1_{F(A)}\) decomposes as
\[
F(A)\xrightarrow{F(\eta_A)}FGF(A)\xrightarrow{\varepsilon_{F(A)}}F(A)
.\]
By Proposition~\ref{FullyFaithfulAdjoints}, the map \(\varepsilon_{F(A)}\) is an isomorphism, so \(F(\eta_A)\) is an isomorphism as well. 
Since $F$ is conservative, we have that \(\eta_A:A\rightarrow GF(A)\) is an isomorphism.
As $A$ was arbitrary, we obtain that $G$ is essentially surjective.

We conclude that $G$ is an equivalence of categories. By the uniqueness of adjoints, $F$ is its inverse.
\end{proof}

\begin{corollary}[{\cite[Corollary~3.2]{SunZhang21}}]
\label{InheritanceOfFullFaithfulness}
Let \(F:\mathcal{A}\rightleftarrows \mathcal{B}:G\) be adjoint additive functors between additive categories.
Then \(\blank\circ F:\Mod\dashmodule\mathcal{B}\rightleftarrows \Mod\dashmodule\mathcal{A}:\blank\circ G\) is an~adjoint pair.

Furthermore,
if $G$ is fully faithful, then \(\blank\circ F\) is fully faithful.
Conversely, if $F$ is fully faithful, then \(\blank\circ G\) is fully faithful.
\end{corollary}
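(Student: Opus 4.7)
The plan is to first produce the induced adjunction \(\blank\circ F\dashv \blank\circ G\) explicitly, identify its unit and counit in terms of the unit \(\eta\) and counit \(\varepsilon\) of the original adjunction \(F\dashv G\), and then read off both full-faithfulness statements from Proposition~\ref{FullyFaithfulAdjoints}, applied twice.

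First I would build a natural bijection
\[
\Hom_{\Mod\dashmodule\mathcal{A}}(M\circ F, N)\rightleftarrows \Hom_{\Mod\dashmodule\mathcal{B}}(M, N\circ G)
\]
for \(M\in\Mod\dashmodule\mathcal{B}\) and \(N\in\Mod\dashmodule\mathcal{A}\) by sending a natural transformation \(\alpha:M\circ F\Rightarrow N\) to the transformation with components \(\beta_B := \alpha_{G(B)}\circ M(\varepsilon_B)\), and inversely sending \(\beta:M\Rightarrow N\circ G\) to \(\alpha_A := N(\eta_A)\circ \beta_{F(A)}\). Verifying that these are mutually inverse is a routine diagram chase using the naturality of \(\alpha\), \(\beta\), \(\eta\), \(\varepsilon\) together with the two triangle identities of the original adjunction; naturality of the bijection in \(M\) and \(N\) is automatic from the construction. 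Transporting \(1_{M\circ F}\) through this bijection identifies the unit of the induced adjunction as the natural transformation \(\widetilde{\eta}_M:M\Rightarrow M\circ FG\) with components \(\widetilde{\eta}_{M,B}=M(\varepsilon_B)\), and dually the counit as \(\widetilde{\varepsilon}_{N,A}=N(\eta_A)\). Since objects of \(\Mod\dashmodule\mathcal{A}\) are \emph{contravariant} functors, \(M(\varepsilon_B)\) is an arrow \(M(B)\rightarrow M(FG(B))\), matching the domain and codomain required of \(\widetilde{\eta}_M\).

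With these identifications the two full-faithfulness claims become immediate. If \(G\) is fully faithful, Proposition~\ref{FullyFaithfulAdjoints} gives that \(\varepsilon\) is a natural isomorphism; since every functor preserves isomorphisms, each \(M(\varepsilon_B)\) is then an isomorphism, so the induced unit \(\widetilde{\eta}\) is a natural isomorphism, and applying Proposition~\ref{FullyFaithfulAdjoints} once more yields that the left adjoint \(\blank\circ F\) is fully faithful. Symmetrically, if \(F\) is fully faithful then \(\eta\) is a natural isomorphism, so \(\widetilde{\varepsilon}_{N,A}=N(\eta_A)\) is a natural isomorphism, and hence \(\blank\circ G\) is fully faithful. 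The chief obstacle is purely bookkeeping: one must keep careful track of which precomposition functor is the left adjoint, and which of \(\eta\), \(\varepsilon\) provides the unit versus the counit of the induced adjunction. Once variance and directionality are settled, the argument reduces to two invocations of Proposition~\ref{FullyFaithfulAdjoints} after the formula for the induced unit and counit has been identified.
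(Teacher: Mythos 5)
Your proof is correct. The paper itself does not spell out an argument for this corollary but merely cites \cite[Corollary~3.2]{SunZhang21}, so there is no "paper proof" to compare against; what you give is the standard mate construction. Your unit/counit formulas check out: $\widetilde{\eta}_{M,B}=M(\varepsilon_B)\colon M(B)\to M(FG(B))$ and $\widetilde{\varepsilon}_{N,A}=N(\eta_A)\colon N(GF(A))\to N(A)$ have the right domains and codomains precisely because objects of $\Mod\dashmodule(\blank)$ are contravariant, and transporting identities through your bijection produces exactly these. The "routine diagram chase" you defer does indeed use only naturality of $\alpha,\beta,\eta,\varepsilon$ and the triangle identities, so the sketch is sound. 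Two tiny points worth including for completeness: (1) you should remark that $M\circ F$ and $N\circ G$ are again additive functors, so that precomposition is well-defined on $\Mod\dashmodule\mathcal{A}$ and $\Mod\dashmodule\mathcal{B}$ — this is immediate from the additivity hypothesis on $F$ and $G$; (2) when you invoke Proposition~\ref{FullyFaithfulAdjoints} twice, it is worth flagging explicitly that the roles flip: the original $G$ is the \emph{right} adjoint, so its full faithfulness controls the counit $\varepsilon$, whereas $\blank\circ F$ is the \emph{left} adjoint of the induced pair, so its full faithfulness is read off the induced unit $\widetilde{\eta}$ — you do get this right, but stating it avoids exactly the bookkeeping trap you identify.
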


Finally, we can present the theorem by Sun and Zhang about induced functors.
As mentioned above, the statement presented here is more general than in~the~original article because it also concerns metrics which are not good.

\begin{theorem}[{\cite[Theorem~4.3, pt.\ 3]{SunZhang21}}]
\label{SunZhangTheorem}
Let \(F:\mathcal{T}\rightleftarrows \mathcal{S}:G\) be adjoint triangulated functors between triangulated categories with metrics \(\{B_n\}_{n\in\N}\) and \(\{C_n\}_{n\in\N}\), respectively.
If $F$ and $G$ are compressions, then the functor \(\blank\circ F:\Mod\dashmodule\mathcal{S}\rightarrow\Mod\dashmodule\mathcal{T}\) induces a triangulated functor \(\blank\circ F:\mathfrak{S}({\mathcal{S}})\rightarrow\mathfrak{S}(\mathcal{T})\).
\end{theorem}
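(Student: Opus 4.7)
The plan is to decompose the statement into three claims: that $\blank\circ F$ restricts to a functor $\mathfrak{S}(\mathcal{S})\to\mathfrak{S}(\mathcal{T})$, that the restriction commutes with the autoequivalence $\Sigma$, and that it sends distinguished triangles to distinguished triangles. Since $\mathfrak{S}(\mathcal{S})=\mathfrak{C}(\mathcal{S})\cap\mathfrak{L}(\mathcal{S})$, the content of Theorem~\ref{SunZhangTheorem} beyond what was already proved in Lemmas~\ref{CompressionsPreserveCompactlySupportedElements} and~\ref{CompressionsPreservePrecompletion} is essentially the triangulatedness.

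First I would use Lemma~\ref{CompressionsPreserveCompactlySupportedElements}, applied to the compression $F$, to send $\mathfrak{C}(\mathcal{S})$ into $\mathfrak{C}(\mathcal{T})$, and Lemma~\ref{CompressionsPreservePrecompletion}, applied to the adjunction $F\dashv G$ with $G$ a compression, to send $\mathfrak{L}(\mathcal{S})$ into $\mathfrak{L}(\mathcal{T})$. Intersecting the two statements yields the desired restriction $\blank\circ F\colon\mathfrak{S}(\mathcal{S})\to\mathfrak{S}(\mathcal{T})$. I would record here the crucial ``in particular'' formula \((\moco E_*)\circ F\simeq \moco G(E_*)\) to use below.

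Second, compatibility with $\Sigma$ is a direct calculation: for any $H\in\Mod\dashmodule\mathcal{S}$ and $T\in\mathcal{T}$,
\[
[\Sigma(H\circ F)](T)=H\bigl(F(\Sigma^{-1}T)\bigr)\simeq H\bigl(\Sigma^{-1}F(T)\bigr)=[(\Sigma H)\circ F](T),
\]
and this isomorphism is natural in $H$ because $F$ is triangulated. For triangle preservation, take a distinguished triangle $A\to B\to C\to \Sigma A$ in $\mathfrak{S}(\mathcal{S})$, which by Definition~\ref{DefinitionCompletion} is isomorphic to $\moco$ of a good Cauchy sequence of triangles $a_*\to b_*\to c_*\to \Sigma a_*$ in $\mathcal{S}$. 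Applying the triangulated compression $G$ termwise yields, by Lemma~\ref{CompressionsPreserveCauchySequences}, a good Cauchy sequence of triangles $G(a_*)\to G(b_*)\to G(c_*)\to \Sigma G(a_*)$ in $\mathcal{T}$. The formula from Lemma~\ref{CompressionsPreservePrecompletion} applied componentwise then identifies $A\circ F\to B\circ F\to C\circ F\to \Sigma(A\circ F)$ with $\moco$ of this good Cauchy sequence of triangles in $\mathcal{T}$, which is by definition a distinguished triangle in $\mathfrak{S}(\mathcal{T})$.

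The main obstacle I expect is a naturality bookkeeping issue in the last step: Lemma~\ref{CompressionsPreservePrecompletion} gives a \emph{pointwise} isomorphism $(\moco E_*)\circ F\simeq \moco G(E_*)$, whereas for triangle preservation one needs these isomorphisms to assemble into a morphism of triangles. I would address this by tracking through the proof of Lemma~\ref{CompressionsPreservePrecompletion} to verify that the chain of isomorphisms using the Yoneda embedding, the commutation of $\Hom$ with directed colimits in the second argument, and the adjunction $F\dashv G$ is natural in the Cauchy sequence (in particular in morphisms of Cauchy sequences $a_*\to b_*$), which then produces the required isomorphism of triangles in $\Mod\dashmodule\mathcal{T}$ and completes the argument.
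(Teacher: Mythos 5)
Your proposal follows essentially the same route as the paper's proof: well-definedness via Lemmas~\ref{CompressionsPreserveCompactlySupportedElements} and~\ref{CompressionsPreservePrecompletion}, the natural isomorphism with the shift $\Sigma$ coming from $F$ being triangulated, and triangle preservation by applying $G$ termwise to a good Cauchy sequence of triangles and invoking the $(\moco E_*)\circ F\simeq\moco G(E_*)$ identification. You also explicitly flag the naturality of that identification as a point needing verification, which the paper treats implicitly; this is a reasonable observation and your proposed resolution (chasing naturality through the Yoneda/colimit/adjunction chain in Lemma~\ref{CompressionsPreservePrecompletion}) is the right one.
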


\begin{proof}
By Lemma~\ref{CompressionsPreserveCompactlySupportedElements} and Lemma~\ref{CompressionsPreservePrecompletion}, we get that the functor \(\blank\circ F:\mathfrak{S}({\mathcal{S}})\rightarrow\mathfrak{S}(\mathcal{T})\) is well-defined. We only need to check that it is triangulated.

By Theorem~\ref{NeemansCompletionTheorem}, we know that \(\mathfrak{S}({\mathcal{S}}),\mathfrak{S}(\mathcal{T})\) are triangulated categories with shifts given by 
\(\mathbf{\Sigma} E:=E\circ{\Sigma}^{-1}\)
for either \(E\in\mathfrak{S}({\mathcal{S}})\), or \(E\in\mathfrak{S}({\mathcal{T}})\), respectively.
(For~clarity, in~this proof we denote shifts in completions by bold \(\mathbf{\Sigma}\) instead of \(\Sigma\).).

We define a natural isomorphism \(\psi:(\blank\circ F) \circ \mathbf{\Sigma}\Rightarrow\mathbf{\Sigma}\circ(\blank\circ F)\) with components \(\psi_E\) given for each \(E\in\mathfrak{S}(\mathcal{S})\) by~the~equation
\[
\begin{split}
[(\blank\circ F) \circ \mathbf{\Sigma}](E)&=(\blank\circ F)\big(E\circ{\Sigma}^{-1}\big)= E\circ{\Sigma}^{-1}\circ F\simeq E\circ F\circ{\Sigma}^{-1}\\
&=\mathbf{\Sigma}(E\circ F)=[\mathbf{\Sigma}\circ(\blank\circ F)](E)
\end{split}
\]
where the existence of the natural isomorphism in the middle follows from the fact that $F$ is a~triangulated functor.

We fix a triangle in \(\mathfrak{S}({\mathcal{S}})\), which is necessarily isomorphic to
\[
\moco A_*\xrightarrow{a} \moco B_*\xrightarrow{b}\moco C_*\xrightarrow{c}\mathbf{\Sigma}\left(\moco A_*\right)
\]
for a good Cauchy sequence of triangles \(A_*\rightarrow B_*\rightarrow C_* \rightarrow \Sigma A_*\) in \(\mathcal{S}\).

Under the notation for natural transformations from Subsection~\ref{GeneralCategoryTheory} and after setting \(\delta:=\psi_{\begin{picture}(40,10)
\put(19,3){\makebox(0,0)[b]{\text{\rm \footnotesize Mocolim}}}
\put(4,1){\ \vector(1,0){25}}
\put(37,4.5){\({}_{A_*}\)}
\end{picture}\,\,}\circ c_F\)
, the candidate triangle 
\[
\begin{split}
\left(\moco A_*\right)\circ F&\xrightarrow{a_F} \left(\moco B_*\right)\circ F\xrightarrow{b_F}\\
&\xrightarrow{b_F}\left(\moco C_*\right)\circ F\xrightarrow{\delta} \mathbf{\Sigma}\left[\left(\moco A_*\right)\circ F\right]
\end{split}
\]
is isomorphic (as a candidate triangle) to the distinguished triangle
\[
\moco G(A_*)\rightarrow \moco G(B_*)\rightarrow\moco G(C_*)\rightarrow\mathbf{\Sigma}\big(\moco G(X_*)\big),
\]
by Lemma~\ref{CompressionsPreservePrecompletion} and the fact that $G$ is a compression. Hence, it is a distinguished triangle itself. Consequently, the functor \(\blank\circ F\) is triangulated.
\end{proof}

\begin{corollary}[{\cite[Corollary on pg.\ 102]{SunZhang21}}]
Let \(F:\mathcal{T}\rightleftarrows \mathcal{S}:G\) be adjoint triangulated functors between triangulated categories with metrics \(\{B_n\}_{n\in\N}\) and~\(\{C_n\}_{n\in\N}\), respectively.
If $F$ and $G$ are compressions and $G$ has a right adjoint which is a~compression, then the triangulated functors \(\blank\circ F:\mathfrak{S}(\mathcal{S})\rightleftarrows \mathfrak{S}(\mathcal{T}):\blank\circ G\) form an~adjoint pair.
\end{corollary}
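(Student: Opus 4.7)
The plan is to first ensure that both precomposition functors restrict to well-defined (triangulated) functors between the completions, and then argue that the ambient adjunction on the module categories restricts to an adjunction between the completions because the latter are full subcategories.

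First I would verify that $\blank\circ F:\mathfrak{S}(\mathcal{S})\rightarrow\mathfrak{S}(\mathcal{T})$ is a well-defined triangulated functor, which is immediate from Theorem~\ref{SunZhangTheorem} applied to the adjoint pair $F\dashv G$, since both $F$ and $G$ are compressions by hypothesis. Next, denoting the right adjoint of $G$ by $H:\mathcal{S}\rightarrow\mathcal{T}$ (which by assumption is also a compression), I would apply Theorem~\ref{SunZhangTheorem} a second time, this time to the adjoint pair $G\dashv H$, to obtain that $\blank\circ G:\mathfrak{S}(\mathcal{T})\rightarrow\mathfrak{S}(\mathcal{S})$ is likewise a well-defined triangulated functor.

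Having both functors in place, I would invoke Corollary~\ref{InheritanceOfFullFaithfulness} applied to the original adjunction $F\dashv G$ to obtain the ambient adjoint pair
\[
\blank\circ F\ :\ \Mod\dashmodule\mathcal{S}\ \rightleftarrows\ \Mod\dashmodule\mathcal{T}\ :\ \blank\circ G,
\]
together with the natural bijection $\Hom_{\Mod\dashmodule\mathcal{T}}(E\circ F, F')\simeq\Hom_{\Mod\dashmodule\mathcal{S}}(E, F'\circ G)$ for all $E\in\Mod\dashmodule\mathcal{S}$ and $F'\in\Mod\dashmodule\mathcal{T}$.

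The final step is to restrict this bijection to the completions. Since $\mathfrak{S}(\mathcal{S})$ and $\mathfrak{S}(\mathcal{T})$ are by definition full subcategories of $\Mod\dashmodule\mathcal{S}$ and $\Mod\dashmodule\mathcal{T}$ respectively, the Hom-groups computed in the completions coincide with those computed in the module categories. Combined with the fact (verified in the first two paragraphs) that $E\circ F\in\mathfrak{S}(\mathcal{T})$ whenever $E\in\mathfrak{S}(\mathcal{S})$ and $F'\circ G\in\mathfrak{S}(\mathcal{S})$ whenever $F'\in\mathfrak{S}(\mathcal{T})$, the ambient Hom-isomorphism directly restricts to the desired natural bijection between $\Hom_{\mathfrak{S}(\mathcal{T})}(E\circ F, F')$ and $\Hom_{\mathfrak{S}(\mathcal{S})}(E, F'\circ G)$. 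I do not anticipate a genuine obstacle here; the only point that needs a quick check is this standard fact that an adjunction restricts along full subcategories once both functors are verified to preserve them, and both preservation statements are provided by Theorem~\ref{SunZhangTheorem}.
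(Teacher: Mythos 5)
Your proposal is correct and takes essentially the same route as the paper: apply Theorem~\ref{SunZhangTheorem} to both adjoint pairs $F\dashv G$ and $G\dashv H$, invoke Corollary~\ref{InheritanceOfFullFaithfulness} for the ambient adjunction on module categories, then restrict along the full subcategories. Two minor remarks: the right adjoint $H$ of $G:\mathcal{S}\to\mathcal{T}$ goes $\mathcal{T}\to\mathcal{S}$, not $\mathcal{S}\to\mathcal{T}$ as you wrote (a typo that does not affect your subsequent use); and the paper's proof opens by explicitly citing \cite[Lemma~5.3.6]{Neeman01} to note that $H$ is automatically triangulated, a point you leave implicit because the hypothesis already calls $H$ a compression and compressions are by definition triangulated functors.
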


\begin{proof}
The right adjoint to the triangulated functor $G$ is automatically triangulated by \cite[Lemma~5.3.6]{Neeman01}.
The rest then straightforwardly follows from Theorem~\ref{SunZhangTheorem} and~Corollary~\ref{InheritanceOfFullFaithfulness}.
\end{proof}

\subsection{Preimages and images of metrics}
\label{sec:PreimagesImages}
This subsection begins with a brief overview. Precise definitions and statements follow.

Assume we have a triangulated functor \(F:\mathcal{T}\rightarrow\mathcal{S}\). Given a metric on \(\mathcal{S}\), we construct a so-called preimage metric on \(\mathcal{T}\) rendering the functor $F$ a compression with respect to these two metrics. Conversely, we want to define an image of a~metric on \(\mathcal{T}\) under \(\mathcal{S}\) - however, this construction is only possible while $F$ is full.

We then use the image and preimage metrics together with the theory of induced functors between completions from \cite{SunZhang21} recalled in Subsection~\ref{sub:InducedFunctorsOnCompletions} to prove that under certain condition on an existence of sufficiently nice adjoints we can calculate a~completion of a triangulated category as a completion of a different triangulated category with respect to a preimage metric (see Theorem~\ref{CalculatingCompletionsElsewhere}).
The conditions for~the~proposition to work are satisfied in the presence of a Serre functor (see Subsection~\ref{sub:SerreFunctors}).

\begin{definition}
Let $\mathcal{T}$, \(\mathcal{S}\) be triangulated categories and \(F:\mathcal{T}\rightarrow\mathcal{S}\) be a functor.
Let \(\{B_n\}_{n\in\N}\) be a metric on \(\mathcal{S}\). We define \textit{the preimage of} \(\{B_n\}_{n\in\N}\) \textit{under} \(F\) as the sequence \(\{F^{-1}(B_n)\}_{n\in\N}\) of subcategories of \(\mathcal{T}\).
\end{definition}

\begin{lemma}
\label{PreimageIsAMetric}
Let $\mathcal{T}$, \(\mathcal{S}\) be triangulated categories and \(F:\mathcal{T}\rightarrow\mathcal{S}\) be a triangulated functor with its natural isomorphism \(\varphi:F\Sigma\Rightarrow\Sigma F\).
If \(\{B_n\}_{n\in\N}\) is a metric on~\(\mathcal{S}\), then \(\{F^{-1}(B_n)\}_{n\in\N}\) is a metric on \(\mathcal{T}\).
Furthermore, if \(\{B_n\}_{n\in\N}\) is good, so is \(\{F^{-1}(B_n)\}_{n\in\N}\).
\end{lemma}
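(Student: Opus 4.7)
The plan is to verify the three defining properties of a metric directly, exploiting the fact that $F$ is triangulated (and in particular sends triangles to triangles and commutes with $\Sigma$ up to the natural isomorphism $\varphi$).

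First I would check the easy conditions. The chain $B_1 \supseteq B_2 \supseteq \cdots$ pulls back to the chain $F^{-1}(B_1) \supseteq F^{-1}(B_2) \supseteq \cdots$, and since $F$ is additive we have $F(0) \simeq 0 \in B_n$ for every $n$, so $0 \in F^{-1}(B_n)$. These two are immediate from the definitions.

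The one substantive condition is extension-closedness, i.e.\ $F^{-1}(B_n) * F^{-1}(B_n) = F^{-1}(B_n)$. The inclusion ``$\supseteq$'' follows by embedding any $X \in F^{-1}(B_n)$ into the trivial triangle $0 \to X \to X \to 0$. For ``$\subseteq$'', take $X \in F^{-1}(B_n) * F^{-1}(B_n)$, so there is a triangle $A \to X \to B \to \Sigma A$ in $\mathcal{T}$ with $F(A), F(B) \in B_n$. Applying the triangulated functor $F$ and using $\varphi_A : F(\Sigma A) \xrightarrow{\sim} \Sigma F(A)$ yields a distinguished triangle $F(A) \to F(X) \to F(B) \to \Sigma F(A)$ in $\mathcal{S}$. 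Since $B_n = B_n * B_n$, this forces $F(X) \in B_n$, hence $X \in F^{-1}(B_n)$.

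For the final claim about goodness, suppose $\{B_n\}_{n\in\N}$ satisfies the rapid decrease condition. Let $X \in F^{-1}(B_{n+1})$; then $F(\Sigma X) \simeq \Sigma F(X) \in \Sigma B_{n+1} \subseteq B_n$ via $\varphi_X$, so $\Sigma X \in F^{-1}(B_n)$. Symmetrically, $F(\Sigma^{-1} X) \simeq \Sigma^{-1} F(X) \in \Sigma^{-1} B_{n+1} \subseteq B_n$ (applying $\varphi^{-1}_{\Sigma^{-1}X}$), so $\Sigma^{-1} X \in F^{-1}(B_n)$. Together with $F^{-1}(B_{n+1}) \subseteq F^{-1}(B_n)$ from the chain property, this gives $\Sigma^{-1} F^{-1}(B_{n+1}) \cup F^{-1}(B_{n+1}) \cup \Sigma F^{-1}(B_{n+1}) \subseteq F^{-1}(B_n)$, as required. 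No step presents a real obstacle; the only thing to remain attentive to is invoking $\varphi$ at the correct spot so that the shift can be moved across $F$.
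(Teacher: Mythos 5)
Your proof is correct and follows essentially the same route as the paper's: pull back the chain and $0$-object conditions directly from additivity, use that $F$ sends triangles to triangles (via $\varphi$) to pull back extension-closedness from $B_n * B_n = B_n$, and use $\varphi$ again to move shifts across $F$ for rapid decrease. The only superficial difference is that you spell out both inclusions of $F^{-1}(B_n) * F^{-1}(B_n) = F^{-1}(B_n)$, whereas the paper treats the trivial direction implicitly.
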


\begin{proof}
Since \(0\in B_n\) for all \(n\in\N\) and $F$ is additive, we have \(0\in F^{-1}(B_n)\).
Also \(F^{-1}(B_1)\supseteq F^{-1}(B_2)\supseteq F^{-1}(B_3)\supseteq\ldots\) is a non-increasing chain of subcategories.

We need to check that \(F^{-1}(B_n)* F^{-1}(B_n)= F^{-1}(B_n)\) for all \(n\in\N\).
Let \(n\in\N\). We fix a~triangle \(U\xrightarrow{f}V\xrightarrow{g}W\xrightarrow{h}\Sigma U\) in \(\mathcal{T}\) with \(U,W\in F^{-1}(B_n)\).
We know that
\[
F(U)\xrightarrow{F(f)}F(V)\xrightarrow{F(g)}F(W)\xrightarrow{\varphi_U\circ F(h)}\Sigma F(U)
\]
is a triangle
because \(F\) is triangulated.
As \(F(U),F(W)\in B_n\), so is \(F(V)\) since \(B_n\) is extension-closed. Hence \(V\in F^{-1}(B_n)\), and \(F^{-1}(B_n)\) is also extension-closed.
This finishes the proof of~the~fact that the preimage of a metric is a metric.

Now for the proof of the second statement, assume that the metric \(\{B_n\}_{n\in\N}\) is good.
In order to verify that the metric \(\{F^{-1}(B_n)\}_{n\in\N}\) is also good, we need to~check
\[
\Sigma^{-1}F^{-1}(B_{n+1})\cup F^{-1}(B_{n+1})\cup \Sigma F^{-1}(B_{n+1})\subseteq F^{-1}(B_n)
\]
for all \(n\in\N\). Fix \(n\in\N\).
We already know \(F^{-1}(B_{n+1})\subseteq F^{-1}(B_n)\).
Let us have \(X\in F^{-1}(B_{n+1})\). Because \(F(X)\in B_{n+1}\), we get \(\Sigma F(X)\in B_n\). However, there is an isomorphism \(\varphi_X:F\Sigma(X)\rightarrow \Sigma F(X)\), and since \(B_n\) is extension-closed (thus closed under isomorphisms), it holds that \(\Sigma X\in  F^{-1}(B_n)\).
The proof that
\(\Sigma^{-1} X\in  F^{-1}(B_n)\) is analogous.
\end{proof}

\begin{definition}
Let $\mathcal{T}$, \(\mathcal{S}\) be triangulated categories and \(F:\mathcal{T}\rightarrow\mathcal{S}\) be a functor.
Let \(\{B_n\}_{n\in\N}\) be a metric on \(\mathcal{T}\). We define \textit{the image of} \(\{B_n\}_{n\in\N}\) \textit{under} \(F\) as the~sequence \(\{F(B_n)\}_{n\in\N}\), where \(F(B_n)\) denotes the subcategory
\[
F(B_n)=\{Z\in\mathcal{S}:Z\simeq F(X),X\in B_n\} \text{\ \ \ for any\ \ \ } n\in\N
.\]
\end{definition}

\begin{lemma}
\label{ImageIsSometimesAMetric}
Let $\mathcal{T}$, \(\mathcal{S}\) be triangulated categories and \(F:\mathcal{T}\rightarrow\mathcal{S}\) be a full triangulated functor with its natural isomorphism \(\varphi:F\Sigma\Rightarrow\Sigma F\).
If \(\{B_n\}_{n\in\N}\) is a~metric
on \(\mathcal{S}\), then \(\left\{{F(B_n)}\right\}_{n\in\N}\) is a~metric
on \(\mathcal{T}\).

Furthermore, if \(\{B_n\}_{n\in\N}\) is a~good metric
on \(\mathcal{S}\), then \(\left\{{F(B_n)}\right\}_{n\in\N}\) is a~good metric
on \(\mathcal{T}\).
\end{lemma}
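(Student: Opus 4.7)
The plan is to verify the defining properties of a metric for $\{F(B_n)\}_{n\in\N}$ one by one, with the extension-closure property being the one where fullness of $F$ genuinely enters. Fullness of the subcategories, the non-increasing chain condition, and containment of $0$ are immediate: they follow from the corresponding facts for $\{B_n\}_{n\in\N}$ together with $F$ being additive (so $F(0)\simeq 0$) and the fact that $X\in B_{n+1}\subseteq B_n$ gives $F(X)\in F(B_{n+1})\cap F(B_n)$.

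The heart of the argument is to show $F(B_n)*F(B_n)=F(B_n)$. Given a triangle
\[
Z\longrightarrow Y\longrightarrow Z'\xrightarrow{h}\Sigma Z
\]
in $\mathcal{S}$ with $Z,Z'\in F(B_n)$, pick $X,X'\in B_n$ with $Z\simeq F(X)$ and $Z'\simeq F(X')$. Transport $h$ along these isomorphisms and along the natural isomorphism $\varphi_X\colon F(\Sigma X)\xrightarrow{\sim}\Sigma F(X)$ to obtain a morphism $F(X')\rightarrow F(\Sigma X)$ in $\mathcal{S}$. Here is where fullness matters: this morphism lifts to some $h'\colon X'\rightarrow\Sigma X$ in $\mathcal{T}$. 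Completing $h'$ to a distinguished triangle $X\rightarrow W\rightarrow X'\xrightarrow{h'}\Sigma X$ in $\mathcal{T}$ and using $B_n*B_n=B_n$ gives $W\in B_n$. Applying $F$ and patching with $\varphi_X$ yields a distinguished triangle in $\mathcal{S}$ whose first, third and connecting data agree, up to the chosen isomorphisms, with $Z\rightarrow Y\rightarrow Z'\xrightarrow{h}\Sigma Z$; hence $Y\simeq F(W)\in F(B_n)$ by the standard morphism-of-triangles argument (the $3\times 3$ or $(TR3)$ trick).

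For the second statement, assume $\{B_n\}_{n\in\N}$ is good. The inclusion $F(B_{n+1})\subseteq F(B_n)$ has already been noted. For $\Sigma F(B_{n+1})\subseteq F(B_n)$, take $Z\simeq F(X)$ with $X\in B_{n+1}$; using the natural isomorphism $\varphi_X$, we have $\Sigma Z\simeq\Sigma F(X)\simeq F(\Sigma X)$, and $\Sigma X\in B_n$ by the rapid decrease condition for $\{B_n\}_{n\in\N}$, so $\Sigma Z\in F(B_n)$. The case $\Sigma^{-1}$ is entirely analogous, using $\varphi^{-1}$.

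I expect the extension-closure step to be the only subtle point: the fullness hypothesis is exactly what permits the connecting morphism to be lifted from $\mathcal{S}$ back to $\mathcal{T}$, and without it one cannot build the required triangle inside $\mathcal{T}$ to witness the extension. The remaining verifications are bookkeeping with $\varphi$, and should proceed in parallel to the analogous parts of the proof of Lemma~\ref{PreimageIsAMetric}.
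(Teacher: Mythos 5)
Your proof is correct and follows essentially the same approach as the paper's: using fullness to lift the connecting morphism to $\mathcal{T}$, completing it to a triangle there, and invoking extension-closure of $B_n$ before pushing forward with $F$ and $\varphi$. The only cosmetic difference is that you explicitly carry the isomorphisms $Z\simeq F(X)$, $Z'\simeq F(X')$ around, whereas the paper fixes the triangle with its outer terms already of the form $F(X)$, $F(Z)$; both are standard.
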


\begin{proof}
Since \(0\in B_n\) for all \(n\in\N\) and $F$ is additive, we have \(0\in {F(B_n)}\).
Also \(\mathcal{S}\supseteq F(B_2)\supseteq F(B_3)\supseteq\ldots\) is a~non-increasing chain of subcategories.

We need to check that \(F(B_n)* F(B_n)= F(B_n)\) for all \(n\in\N\).
Let \(n\in\N\). We fix a~triangle in \(\mathcal{S}\) of the form
\[
F(X)\xrightarrow{}V\xrightarrow{}F(Z)\xrightarrow{w}\Sigma F(X)
\]
for some \(X,Z\in B_n\).
Using the fullness of $F$, we find a map
\[
h:Z\rightarrow\Sigma X \text{ satisfying } F(h)=\varphi^{-1}_X\circ w:F(Z)\rightarrow F(\Sigma X)
.\]
We complete $h$ to a triangle 
\(X\xrightarrow{f}U\xrightarrow{g}Z\xrightarrow{h}\Sigma X\).
Since \(B_n*B_n=B_n\), we have \(U\in B_n\).
We know that
\[
F(X)\xrightarrow{F(f)}F(U)\xrightarrow{F(g)}F(Z)\xrightarrow{\varphi_X\circ F(h)}\Sigma F(X)
\]
is a triangle
because \(F\) is triangulated.
And since \(\varphi_X\circ F(h)=w\), this triangle is isomorphic to the triangle \(F(X)\xrightarrow{}V\xrightarrow{}F(Z)\xrightarrow{w}\Sigma F(X)\).
In particular, it holds that \(V\simeq F(U)\in F(B_n)\), so \(F(B_n)\) is extension closed.

Now furthermore assume that the metric \(\{B_n\}_{n\in\N}\) is good.
In order to verify that the metric \(\left\{{F(B_n)}\right\}_{n\in\N}\) is also good, we need to check
\[
\Sigma^{-1}F(B_{n+1})\cup F(B_{n+1})\cup \Sigma F(B_{n+1})\subseteq F(B_n)
\]
for all \(n\in\N\). Fix \(n\in\N\).
We already know that it holds that \(F(B_{n+1})\subseteq F(B_n)\). Let \(F(X)\in F(B_{n+1})\) for some \(X\in B_{n+1}\). We get \(\Sigma X\in B_n\). However, there is an~isomorphism \(\varphi_X:F\Sigma(X)\rightarrow \Sigma F(X)\).
Thus \(\Sigma F(X)\simeq F\Sigma(X)\in F(B_n)\).
The~proof that
\(\Sigma^{-1} X\in  F^{-1}(B_n)\) is analogous.
\end{proof}

\begin{example}
The statement of Lemma~\ref{ImageIsSometimesAMetric} does hold in general without the~assumption on the functor \(F:\mathcal{T}\rightarrow\mathcal{S}\) being full.

Let \(K_1\) and \(K_2\) be two copies of the same field $K$ and consider the coproduct category \(\mathcal{T}:=\derived^b(\modf\dashmodule K_1)\coprod\derived^b(\modf\dashmodule K_2)\).
We set \(\mathcal{S}:=\derived^b(\modf\dashmodule KA_2)\) to be the~bounded derived category of the path algebra of the Dynkin quiver \(A_2\). The~notation \(S(1)\) and \(S(2)\) will stand for the two non-isomorphic simple modules in the module category \(\modf\dashmodule KA_2\).

Then the triangulated functor \(F:\mathcal{T}\rightarrow\mathcal{S}\) given by \(K_1\mapsto S(1)\) and \(K_2\mapsto S(2)\) is not full, and \(F(\mathcal{T})\subsetneq\mathcal{S}\), but \(F(\mathcal{T})*F(\mathcal{T})=\mathcal{S}\).
If we interpret the category \(\mathcal{T}\) as a constant metric \(\{\mathcal{T}\}_{n\in\N}\) on itself, then the image \(\{F(\mathcal{T})\}_{n\in\N}\) of \(\{\mathcal{T}\}_{n\in\N}\) is not a~metric on \(\mathcal{S}\) at all because \(F(\mathcal{T})\) is not extension-closed.
\end{example}

We prove a relation between a preimage metric under a functor and an image metric under a fully faithful adjoint.

\begin{lemma}
\label{AdjointsImageLiesInPreimage}
Let \(F:\mathcal{T}\rightleftarrows \mathcal{S}:G\) be adjoint triangulated functors between triangulated categories.
Let \(\{B_n\}_{n\in\N}\) be a metric on \(\mathcal{T}\).
If $F$ is fully faithful, then \(\left\{{F(B_n)}\right\}_{n\in\N}\leq\{G^{-1}(B_n)\}_{n\in\N}\).
\end{lemma}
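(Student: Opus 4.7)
The plan is to unwind the definition and verify the refinement inclusion directly via the unit of the adjunction. By the definition of the partial order on metrics, $\{F(B_n)\}_{n\in\N}\leq\{G^{-1}(B_n)\}_{n\in\N}$ amounts to showing that for every $n\in\N$ there exists an $m\in\N$ such that $F(B_m)\subseteq G^{-1}(B_n)$. I expect that the naive choice $m=n$ already works, so the entire content of the proof is to check the single inclusion $F(B_n)\subseteq G^{-1}(B_n)$ for all $n$.

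The key input is Proposition~\ref{FullyFaithfulAdjoints}: since $F$ is fully faithful, the unit $\eta:1_{\mathcal{T}}\Rightarrow GF$ of the adjunction is a natural isomorphism, so for every $X\in\mathcal{T}$ the component $\eta_X:X\to GF(X)$ is an isomorphism. A small auxiliary remark I would make first is that each $B_n$ is closed under isomorphisms: if $X\simeq X'$ with $X\in B_n$, then applying $B_n*B_n=B_n$ to the triangle $X\to X'\to 0\to \Sigma X$ (with $X,0\in B_n$) yields $X'\in B_n$.

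The main step is then the following one-line verification. Fix $n\in\N$ and take any $Z\in F(B_n)$, so by definition $Z\simeq F(X)$ for some $X\in B_n$. Applying $G$ and using the unit isomorphism gives $G(Z)\simeq GF(X)\simeq X\in B_n$, and by the closure under isomorphisms observed above we conclude $G(Z)\in B_n$, i.e.\ $Z\in G^{-1}(B_n)$. Hence $F(B_n)\subseteq G^{-1}(B_n)$, which establishes the required refinement. I do not anticipate any real obstacle; the statement is essentially an immediate consequence of the fact that the unit of a fully faithful left adjoint is invertible.
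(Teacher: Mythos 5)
Your proof is correct and follows essentially the same route as the paper: both use Proposition~\ref{FullyFaithfulAdjoints} to get the unit isomorphism $1_{\mathcal{T}}\simeq GF$ and the fact that $B_n$ is closed under isomorphisms to conclude $F(B_n)\subseteq G^{-1}(B_n)$. Your explicit justification that $B_n$ is closed under isomorphisms (via the split triangle $X\to X'\to 0\to\Sigma X$) is a detail the paper leaves implicit.
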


\begin{proof}
We know that \(\left\{{F(B_n)}\right\}_{n\in\N}\) and \(\{G^{-1}(B_n)\}_{n\in\N}\) are metrics by Lemma~\ref{ImageIsSometimesAMetric} and Lemma~\ref{PreimageIsAMetric}, respectively.
For all \(n\in\N\) we have \(GF(B_n)=B_n\) because of~the~natural isomorphism \(1_{\mathcal{T}}\simeq GF\) from Proposition~\ref{FullyFaithfulAdjoints} and the fact that both subcategories are closed under isomorphisms.
Hence \(F(B_n)\subseteq G^{-1}(B_n)\), so  \(\left\{{F(B_n)}\right\}_{n\in\N}\leq\{G^{-1}(B_n)\}_{n\in\N}\).
\end{proof}

\begin{lemma}
\label{DualAdjointsImageLiesInPreimage}
Let \(F:\mathcal{T}\rightleftarrows \mathcal{S}:G\) be adjoint triangulated functors between triangulated categories.
Let \(\{B_n\}_{n\in\N}\) be a metric on \(\mathcal{S}\).
If $G$ is fully faithful, then \(\left\{{G(B_n)}\right\}_{n\in\N}\leq\{F^{-1}(B_n)\}_{n\in\N}\).
\end{lemma}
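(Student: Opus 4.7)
The plan is to proceed by dualizing the argument given for Lemma~\ref{AdjointsImageLiesInPreimage}, swapping the roles of $F$ and $G$, of image and preimage, and of unit and counit. Since $G$ is fully faithful, we will invoke Proposition~\ref{FullyFaithfulAdjoints} to obtain that the counit $\varepsilon:FG\Rightarrow 1_{\mathcal{S}}$ of the adjunction is a natural isomorphism.

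First, I would record that both sides of the claimed inequality actually are metrics: the image $\{G(B_n)\}_{n\in\N}$ is a metric on $\mathcal{T}$ by Lemma~\ref{ImageIsSometimesAMetric}, since $G$ is a full triangulated functor (being fully faithful), while $\{F^{-1}(B_n)\}_{n\in\N}$ is a metric on $\mathcal{T}$ by Lemma~\ref{PreimageIsAMetric}. This is what makes the comparison $\leq$ meaningful.

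Next, for an arbitrary $n\in\N$, I would argue that $FG(B_n)=B_n$ as subcategories of $\mathcal{S}$: the natural isomorphism $\varepsilon$ gives $FG(X)\simeq X$ for every $X\in B_n$, and both $FG(B_n)$ and $B_n$ are closed under isomorphism (the latter because $B_n$ is a full, extension-closed subcategory containing $0$). Consequently, every object $G(X)$ with $X\in B_n$ satisfies $F\bigl(G(X)\bigr)\simeq X\in B_n$, so $G(X)\in F^{-1}(B_n)$, which gives the inclusion
\[
G(B_n)\subseteq F^{-1}(B_n).
\]
Taking $m:=n$ in the definition of $\leq$ yields $\{G(B_n)\}_{n\in\N}\leq\{F^{-1}(B_n)\}_{n\in\N}$, finishing the argument.

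I do not expect a genuine obstacle here: the proof is a direct dualization of the preceding lemma, and the only point to be careful about is verifying that $FG(B_n)$ and $B_n$ really coincide on the nose (not merely up to essential image), which is handled by the closure of $B_n$ under isomorphisms together with the counit being a natural isomorphism.
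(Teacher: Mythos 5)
Your proof is correct and is exactly the dualization the paper intends: the paper's own proof of this lemma consists of the single line ``Dual to the proof of Lemma~\ref{AdjointsImageLiesInPreimage},'' and you have carried out that dualization faithfully, replacing the unit isomorphism $1_{\mathcal{T}}\simeq GF$ with the counit isomorphism $FG\simeq 1_{\mathcal{S}}$ from Proposition~\ref{FullyFaithfulAdjoints}. Nothing to add.
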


\begin{proof}
Dual to the proof of Lemma~\ref{AdjointsImageLiesInPreimage}.
\end{proof}

The following two lemmas help us identify which Cauchy sequences have a~trivial module colimit.
To prove Lemma~\ref{ZeroDirectLimitOfBalls}, we use an argument from \cite[Remark~3.14]{CummingsGratz24}, where it is shown that \(\moco E_*\simeq0\) if and only if \(E_*:= E_1\xrightarrow{f_{1,2}} E_2 \xrightarrow{f_{2,3}} \cdots\) is a so-called \textit{null sequence}, i.e.\ a sequence where for every \(m\in\N\) we can find \(n\geq m\) satisfying \(f_{m,n}=0\).

\begin{lemma}
\label{ZeroDirectLimitOfBalls}
Let $\mathcal{T}$ be a triangulated category with a metric \(\{B_n\}_{n\in\N}\). Let
\[
E_*:= E_1\xrightarrow{f_{1,2}} E_2 \xrightarrow{f_{2,3}} \cdots
\]
be a good Cauchy sequence. If \(\moco E_*\simeq 0\) in \(\mathfrak{L}(\mathcal{T})\), then for every \(i\in\N\) and~\(j\in\Z\) there exists \(M\in\N\) such that for all \(m\geq M\) we have \(\Sigma^j E_m\in \Summ(B_i)\).
\end{lemma}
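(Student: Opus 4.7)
The plan is to reduce the claim to the so-called null sequence condition: for every index $n \in \N$ there exists $m \geq n$ with $f_{n,m} = 0$ in $\mathcal{T}$. Once this is available, the conclusion will follow by combining it with the good Cauchy property, using the fact that a distinguished triangle whose first map vanishes necessarily splits.

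First I would extract the null sequence property from $\moco E_* \simeq 0$. Since filtered colimits in $\Mod\dashmodule\mathcal{T}$ are computed object-wise, the vanishing hypothesis says in particular that $\varinjlim_m \Hom_{\mathcal{T}}(E_n, E_m) = 0$ for every $n$. Applied to the element $\id_{E_n}$, the standard description of filtered colimits of abelian groups then produces some $m \geq n$ such that $f_{n,m} = f_{n,m} \circ \id_{E_n} = 0$ already in $\mathcal{T}$.

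Now I fix $i \in \N$ and $j \in \Z$, and invoke the good Cauchy property with shift parameter $j-1$: there is $M \in \N$ such that for all $m' \geq m \geq M$ the triangle $E_m \to E_{m'} \to D_{m,m'} \to \Sigma E_m$ satisfies $\Sigma^{j-1} D_{m,m'} \in B_i$. Given any $m \geq M$, the null sequence property supplies an index $m'' \geq m$ with $f_{m,m''} = 0$; the corresponding distinguished triangle then splits, so $D_{m,m''} \simeq E_{m''} \oplus \Sigma E_m$, and therefore $\Sigma^{j-1} D_{m,m''} \simeq \Sigma^{j-1} E_{m''} \oplus \Sigma^j E_m$ lies in $B_i$. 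This forces $\Sigma^j E_m \in \Summ(B_i)$, which is the required conclusion. The only real subtlety is the shift bookkeeping: to land precisely on $\Sigma^j E_m$ one must apply the good Cauchy condition at the shift $j-1$, since the cone of a zero map $E_m \to E_{m''}$ contributes an extra copy of $\Sigma E_m$. No deeper machinery is needed beyond Yoneda-style reasoning for filtered colimits and the splitting of triangles with zero map.
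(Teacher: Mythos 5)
Your proof is correct and follows essentially the same route as the paper's: extract a zero morphism $f_{m,n}=0$ from the vanishing of $\varinjlim\Hom_{\mathcal{T}}(E_m,E_*)$, apply the good Cauchy condition at shift $j-1$, split the triangle on the zero map to obtain $D_{m,n}\simeq E_n\oplus\Sigma E_m$, and read off $\Sigma^j E_m\in\Summ(B_i)$. The only cosmetic difference is that the paper cites a lemma of G\"obel--Trlifaj for the filtered-colimit step where you invoke the standard description directly; the content is identical.
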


\begin{proof}
Fix \(i\in\N\) and \(j\in\Z\). We find \(M\in\N\) such that for every \(m'\geq m\geq M\) the~triangle \(E_m\xrightarrow{f_{m,m'}} E_{m'}\rightarrow D_{m,m'}\rightarrow\Sigma E_m\) satisfies \(\Sigma^{j-1} D_{m,m'} \in B_i\).
Let us have \(m\geq M\).
Since
\(\moco E_*\simeq 0\) in \(\mathfrak{L}(\mathcal{T})\), we have, in particular, that \(\varinjlim_{n\in\N} \Hom_{\mathcal{T}}(E_m,E_n)\simeq 0\) in \(\Ab\). Then \cite[Lemma~2.2]{GobelTrlifaj12} implies that there exists some \(n\geq m\) satisfying \(f_{m,n}=0\).

Consider the triangle 
\(E_m\xrightarrow{f_{m,n}} E_{n}\rightarrow D_{m,n}\rightarrow \Sigma E_m\).
Using \(n\geq m\geq M\), we obtain \(\Sigma^{j-1} D_{m,n} \in B_i\).
Since \(f_{m,n}=0\), this triangle splits, so \(D_{m,n}\simeq E_n\oplus \Sigma E_m\).
But then
\[
\Sigma^{j-1} D_{m,n}\simeq \Sigma^{j-1} E_n\oplus \Sigma^j E_m \in B_i
.\]
Hence, we get \(\Sigma^j E_m \in \Summ(B_i)\).
\end{proof}

\begin{lemma}
\label{CharacterisationOfCompactlySupportedZeros}
Let $\mathcal{T}$ be a triangulated category with a metric \(\{B_n\}_{n\in\N}\). Let \(E_*\) be a good Cauchy sequence satisfying \(\moco E_*\in\mathfrak{WC}(\mathcal{T})\).
Then the following statements are equivalent:
\begin{enumerate}[label=(\roman*)]
\item \(\moco E_*\simeq 0\).
\item For every \(i\in\N\) and \(j\in\Z\) there exists \(M\in\N\) such that for all \(m\geq M\) we have that \(\Sigma^j E_m\in \Summ(B_i)\).
\item For every \(i\in\N\) there exists \(M\in\N\) such that for all \(m\geq M\) we have that \(E_m\in \Summ(B_i)\).
\end{enumerate}
\end{lemma}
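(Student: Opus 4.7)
My plan is to prove the lemma cyclically via the implications (i) $\Rightarrow$ (ii) $\Rightarrow$ (iii) $\Rightarrow$ (i). The first two implications come essentially for free: (i) $\Rightarrow$ (ii) is precisely the content of Lemma~\ref{ZeroDirectLimitOfBalls} applied verbatim, and (ii) $\Rightarrow$ (iii) is the special case $j=0$. Consequently, the substantive work lies in (iii) $\Rightarrow$ (i), and this is exactly the step where the hypothesis \(\moco E_*\in\mathfrak{WC}(\mathcal{T})\), which has not been used so far, must enter the picture.

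For (iii) $\Rightarrow$ (i), I would begin by unfolding the hypothesis \(\moco E_*\in\mathfrak{WC}(\mathcal{T})=\bigcup_{i\in\N}Y(B_i)^{\perp}\): there exists some \(k\in\N\) such that \(\Hom_{\Mod\dashmodule\mathcal{T}}(Y(B),\moco E_*)=0\) for every \(B\in B_k\). Applying (iii) with \(i=k\) then produces an index \(M\in\N\) such that \(E_m\in\Summ(B_k)\) for all \(m\geq M\). Because the Yoneda embedding is additive, each \(Y(E_m)\) is then a direct summand of some \(Y(B)\) with \(B\in B_k\), so \(\Hom(Y(E_m),\moco E_*)\) is a direct summand of \(0\), hence vanishes for every \(m\geq M\).

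From here I would finish by exploiting the fact that the canonical colimit injections \(\varphi_m\colon Y(E_m)\to\moco E_*\) live inside these vanishing Hom-groups, forcing \(\varphi_m=0\) for every \(m\geq M\). Since \(\moco E_*=\varinjlim Y(E_*)\) is computed pointwise in \(\Ab\) as \([\moco E_*](T)\simeq\varinjlim\Hom_{\mathcal{T}}(T,E_*)\) for every \(T\in\mathcal{T}\), any class in \([\moco E_*](T)\) is represented by some morphism \(T\to E_m\), and by cofinality we may assume \(m\geq M\); such a class then lies in the image of \(\varphi_m(T)=0\), hence is itself zero. Therefore \(\moco E_*\simeq 0\).

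The main delicate point is the last step, where careful bookkeeping of colimit injections is needed: one must use that a cofinal family of vanishing colimit injections into a directed colimit of abelian groups forces the colimit itself to be zero. Everything else is a routine assembly of already-established facts, and the weak compact support hypothesis is used solely to upgrade the pointwise summand information from (iii) into an honest vanishing statement about \(\moco E_*\).
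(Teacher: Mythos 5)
Your proposal is correct and follows essentially the same route as the paper: the same two easy implications, and for (iii) $\Rightarrow$ (i) the same key move of using weak compact support to produce an index $k$ with $\moco E_*\in Y(B_k)^\perp$, then splitting off $Y(E_m)$ as a summand of some $Y(B)$ with $B\in B_k$ to deduce $\Hom(Y(E_m),\moco E_*)=0$ for all large $m$. The only cosmetic difference is the final step: the paper applies $\Hom(\blank,\moco E_*)$ to the colimit $\varinjlim Y(E_*)$ and concludes $\End(\moco E_*)\simeq\varprojlim 0=0$, whereas you argue pointwise that the colimit injections $\varphi_m$ vanish and hence the directed colimit of abelian groups is zero; these are interchangeable formulations of the same observation.
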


\begin{proof}
``i) $\Rightarrow$ ii)'' By Lemma~\ref{ZeroDirectLimitOfBalls}.

``ii) $\Rightarrow$ iii)'' Condition iii) is a special case of ii) when \(j=1\).

``iii) $\Rightarrow$ i)''
Since \(\moco E_*\in\mathfrak{WC}(\mathcal{T})\), we know that there exists an index \(i\in \N\) such that \(\moco E_*\in Y(B_i)^{\perp}\).
Using the condition on \(E_*\) from the~statement of~the~lemma, we may, without loss of generality, assume that for every \(n\in\N\) it holds that \(E_n\in \Summ(B_i)\) because otherwise we can find \(M\in\N\) such that for every \(m\geq M\) it holds that \(E_m\in \Summ(B_i)\) and then cut off objects $E_k$ for small indices \(0\leq k<M\).

Let \(n\in\N\) and let \(F_n\in\mathcal{T}\) be such that \(E_n\oplus F_n\in B_i\). Then after applying Yoneda's embedding we get \(Y(E_n\oplus F_n)=Y(E_n)\oplus Y(F_n)\), so
\[
\begin{split}
0&\simeq\Hom\left(Y(E_n\oplus F_n),\moco E_*\right)\\
&\simeq\Hom\left(Y(E_n),\moco E_*\right)\oplus\Hom\left(Y(F_n),\moco E_*\right).
\end{split}
\]
In~particular, we have \(0\simeq \Hom\left(Y(E_n),\moco E_*\right)\).

Finally, we obtain
\[
\begin{split}
\End\left(\moco E_*\right)&=
\Hom\left(\varinjlim_{n\in\N} Y(E_n),\moco E_*\right)\\
&\simeq
\varprojlim_{n\in\N}\Hom\left(Y(E_n),\moco E_*\right)\simeq
\varprojlim_{n\in\N}0\simeq 0,
\end{split}
\]
so the identity on \(\moco E_*\) is equal to the zero morphism.
Hence, we get that \(\moco E_*\simeq0\).
\end{proof}

We recall a well-know characterisation of conservative triangulated functors (see for example \cite[Example~4.11]{BerghSchnurer20}).

\begin{lemma}
\label{ConservativeZeroKernel}
Let $\mathcal{T}$, \(\mathcal{S}\) be triangulated categories and let \(F:\mathcal{T}\rightarrow\mathcal{S}\) be a~triangulated functor.
Then $F$ is is conservative if and only if \(\Ker F =0\).
\end{lemma}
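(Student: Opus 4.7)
The plan is to reduce the statement to the standard fact that a morphism in a triangulated category is an isomorphism if and only if its cone is zero, and then exploit the compatibility of $F$ with triangles to translate between ``reflecting isomorphisms'' and ``reflecting zero objects''.

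First, for the forward direction, I would assume $F$ is conservative and pick $X \in \Ker F$, meaning $F(X) \simeq 0$. The zero morphism $0\colon X \to 0$ is sent by $F$ to a morphism $F(X) \to 0$ between two zero objects, which is trivially an isomorphism. Since $F$ reflects isomorphisms, $0\colon X \to 0$ must be an isomorphism in $\mathcal{T}$, so $X \simeq 0$. Hence $\Ker F = 0$.

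For the converse, assume $\Ker F = 0$ and take any morphism $f\colon X \to Y$ in $\mathcal{T}$ with $F(f)$ an isomorphism in $\mathcal{S}$. I would complete $f$ to a distinguished triangle $X \xrightarrow{f} Y \to Z \to \Sigma X$ in $\mathcal{T}$. Applying the triangulated functor $F$ yields a distinguished triangle $F(X) \xrightarrow{F(f)} F(Y) \to F(Z) \to \Sigma F(X)$ in $\mathcal{S}$. Because $F(f)$ is an isomorphism, its cone $F(Z)$ must vanish, so $Z \in \Ker F = 0$, giving $Z \simeq 0$. But then the original triangle in $\mathcal{T}$ forces $f$ to be an isomorphism, as desired.

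No real obstacle is expected here; the only subtlety is to make sure we invoke the correct definition of $\Ker F$ (the full subcategory of objects sent to a zero object) and the standard fact that a morphism whose cone is zero is an isomorphism, which holds in any triangulated category.
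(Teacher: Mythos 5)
Your proof is correct, and it is the standard argument. Note that the paper itself does not supply a proof of this lemma; it merely records it as a well-known fact with a citation to Bergh--Schn\"urer, and the argument you give (using that a morphism is an isomorphism if and only if its cone vanishes, together with exactness of $F$) is precisely the expected one.
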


We prove two more technical lemmas and then we proceed with the main result of this section. 

\begin{lemma}
\label{PreimageUnderEssentiallySurjectiveFunctorCommutesWithDirectSummands}
Let \(F:\mathcal{A}\rightarrow\mathcal{B}\) be an essentially surjective additive functor between additive categories.
Let \(\mathcal{C}\subseteq\Ob(\mathcal{B})\) be a class of objects.
Then
\[
F^{-1}\big(\Summ(\mathcal{C})\big)=\Summ\big(F^{-1}(\mathcal{C})\big)
.\]
\end{lemma}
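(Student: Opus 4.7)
The plan is to prove the two inclusions separately. The inclusion $\supseteq$ will be an elementary consequence of the additivity of $F$, requiring no extra hypothesis, while the inclusion $\subseteq$ is where essential surjectivity enters the picture.

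For $\supseteq$, I would take an arbitrary $A\in\Summ\bigl(F^{-1}(\mathcal{C})\bigr)$, so that there exists $A'\in F^{-1}(\mathcal{C})$ and $B\in\mathcal{A}$ with $A\oplus B\simeq A'$ and $F(A')$ isomorphic to some object of $\mathcal{C}$. Since $F$ is additive, $F(A)\oplus F(B)\simeq F(A\oplus B)\simeq F(A')$ lies in (the isomorphism closure of) $\mathcal{C}$, so $F(A)$ is a direct summand of an object of $\mathcal{C}$ and hence $A\in F^{-1}\bigl(\Summ(\mathcal{C})\bigr)$.

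For $\subseteq$, I would start with $A\in F^{-1}\bigl(\Summ(\mathcal{C})\bigr)$, that is, an object $A\in\mathcal{A}$ such that $F(A)$ is a direct summand of some $C\in\mathcal{C}$; write this as $F(A)\oplus Z\simeq C$ for some $Z\in\mathcal{B}$. Here I would invoke the essential surjectivity of $F$ to pick a preimage $A''\in\mathcal{A}$ with $F(A'')\simeq Z$. Setting $A':=A\oplus A''\in\mathcal{A}$, additivity gives $F(A')\simeq F(A)\oplus F(A'')\simeq F(A)\oplus Z\simeq C\in\mathcal{C}$, so $A'\in F^{-1}(\mathcal{C})$, and since $A$ is manifestly a direct summand of $A'$, we conclude $A\in\Summ\bigl(F^{-1}(\mathcal{C})\bigr)$.

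The only potential subtlety I would flag is the convention on how $F^{-1}(\mathcal{C})$ treats objects of $\mathcal{A}$ whose image under $F$ is merely isomorphic (rather than equal) to an element of $\mathcal{C}$; this is consistent with the treatment of $F^{-1}(B_n)$ elsewhere in the paper (e.g.\ Lemma~\ref{PreimageIsAMetric}), where the preimage of a full subcategory is naturally taken to be closed under isomorphism. Beyond this bookkeeping, the argument is completely straightforward, and essential surjectivity of $F$ is used in precisely one place, namely to lift the complementary summand $Z$ from $\mathcal{B}$ up to $\mathcal{A}$.
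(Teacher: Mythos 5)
Your proof is correct and follows essentially the same argument as the paper: both inclusions are handled in the same way, with essential surjectivity used only to lift the complementary summand from $\mathcal{B}$ to $\mathcal{A}$, and the paper addresses the isomorphism-closure subtlety you flag by stipulating up front that $\mathcal{C}$ may be assumed closed under isomorphisms.
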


\begin{proof}
Without loss of generality assume that \(\mathcal{C}\) is closed under isomorphisms.

``$\subseteq$''
Let  \(X\in F^{-1}\big(\Summ(\mathcal{C})\big)\). Then \(F(X)\in\Summ(\mathcal{C})\), so there is some \(Y\in\mathcal{B}\) such that \(F(X)\oplus Y\in\mathcal{C}\).
Since $F$ is essentially surjective, we have \(Y\simeq F(Z)\) for~some \(Z\in\mathcal{A}\). Then \(F(X\oplus Z)\simeq F(X)\oplus Y\in\mathcal{C}\), so \(X\oplus Z\in F^{-1}(\mathcal{C})\).
Hence \(X\in\Summ\big(F^{-1}(\mathcal{C})\big)\).

``$\supseteq$''
Let \(X\in\Summ\big(F^{-1}(\mathcal{C})\big)\). Then there is \(Y\in\mathcal{A}\) such that \(X\oplus Y\in F^{-1}(\mathcal{C})\).
It follows that \(F(X)\oplus F(Y)\simeq F(X\oplus Y)\in\mathcal{C}\), so \(F(X)\in\Summ(\mathcal{C})\).
In other words, we have \(X\in F^{-1}\big(\Summ(\mathcal{C})\big)\).
\end{proof}

\begin{lemma}
\label{ConservatismWithRespectToPreimageMetric}
Let \(F:\mathcal{T}\rightleftarrows \mathcal{S}:G\) be adjoint triangulated functors between triangulated categories.
Let \(\{B_n\}_{n\in\N}\) be a metric  on \(\mathcal{T}\).
Denote the completion of \(\mathcal{T}\) with respect to \(\{B_n\}_{n\in\N}\) as \(\mathfrak{S}(\mathcal{T})\) and
the completion of \(\mathcal{S}\) with respect to~the~preimage metric \(\{G^{-1}(B_n)\}_{n\in\N}\) as \(\mathfrak{S}(\mathcal{S})\).
If $F$ is fully faithful, then the~induced functor
\(\blank\circ F:\mathfrak{S}({\mathcal{S}})\rightarrow\mathfrak{S}(\mathcal{T})\) is a well-defined conservative triangulated functor.
\end{lemma}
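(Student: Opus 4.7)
The plan is first to check that the induced functor is well-defined and triangulated by invoking Theorem~\ref{SunZhangTheorem}, and then to establish conservativity via Lemma~\ref{ConservativeZeroKernel} by showing that any element of $\mathfrak{S}(\mathcal{S})$ which $\blank\circ F$ sends to zero must itself have been zero.

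For the first part, I would verify that both $F$ and $G$ are compressions with respect to $\{B_n\}_{n\in\N}$ and $\{G^{-1}(B_n)\}_{n\in\N}$. The functor $G$ is trivially a compression, since $G(G^{-1}(B_n))\subseteq B_n$ by construction. For $F$, the fact that $F$ is fully faithful gives us a natural isomorphism $1_{\mathcal{T}}\simeq GF$ by Proposition~\ref{FullyFaithfulAdjoints}, and since every $B_n$ is closed under isomorphisms, we obtain $GF(B_n)=B_n$, i.e.\ $F(B_n)\subseteq G^{-1}(B_n)$. Theorem~\ref{SunZhangTheorem} then yields that $\blank\circ F:\mathfrak{S}(\mathcal{S})\rightarrow\mathfrak{S}(\mathcal{T})$ is well-defined and triangulated.

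For conservativity, by Lemma~\ref{ConservativeZeroKernel} it suffices to prove $\Ker(\blank\circ F)=0$. So take $E\in\mathfrak{S}(\mathcal{S})$ with $E\circ F\simeq 0$. Since $E\in\mathfrak{L}(\mathcal{S})$ we can write $E\simeq\moco E_*$ for some good Cauchy sequence $E_*$ in $\mathcal{S}$. Lemma~\ref{CompressionsPreservePrecompletion} gives $E\circ F\simeq\moco G(E_*)$, and by Lemma~\ref{CompressionsPreserveCauchySequences} the sequence $G(E_*)$ is good Cauchy in $\mathcal{T}$. So $\moco G(E_*)\simeq 0$ in $\mathfrak{L}(\mathcal{T})$, and Lemma~\ref{ZeroDirectLimitOfBalls} applied to this sequence (with $j=0$) tells us that for every $i\in\N$ there is an index $M\in\N$ such that $G(E_m)\in\Summ(B_i)$ for every $m\geq M$.

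The key step is to translate this back to the metric $\{G^{-1}(B_n)\}_{n\in\N}$ on $\mathcal{S}$. Here I would exploit the fact that $F$ fully faithful forces $G$ to be essentially surjective (Proposition~\ref{FullyFaithfulAdjoints}), and apply Lemma~\ref{PreimageUnderEssentiallySurjectiveFunctorCommutesWithDirectSummands} to obtain $G^{-1}(\Summ(B_i))=\Summ(G^{-1}(B_i))$. Consequently, for every $i\in\N$ there exists $M\in\N$ with $E_m\in\Summ(G^{-1}(B_i))$ for every $m\geq M$. Since $E\in\mathfrak{S}(\mathcal{S})\subseteq\mathfrak{C}(\mathcal{S})\subseteq\mathfrak{WC}(\mathcal{S})$, we may now invoke Lemma~\ref{CharacterisationOfCompactlySupportedZeros} (condition (iii)) to conclude that $E\simeq 0$. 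I expect this translation step via Lemma~\ref{PreimageUnderEssentiallySurjectiveFunctorCommutesWithDirectSummands} to be the only non-routine part: everything else is just chaining together the compression formalism with the ``null sequence'' characterisation of vanishing module colimits.
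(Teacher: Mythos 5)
Your proof is correct and follows essentially the same route as the paper's: establish that both $F$ and $G$ are compressions, invoke Theorem~\ref{SunZhangTheorem}, then combine Lemma~\ref{CompressionsPreservePrecompletion}, Lemma~\ref{ZeroDirectLimitOfBalls}, Lemma~\ref{PreimageUnderEssentiallySurjectiveFunctorCommutesWithDirectSummands}, and Lemma~\ref{CharacterisationOfCompactlySupportedZeros} to get $\Ker(\blank\circ F)=0$. The only cosmetic difference is that you re-derive the compression property of $F$ directly from $GF\simeq 1_{\mathcal{T}}$, whereas the paper packages that step as Lemma~\ref{AdjointsImageLiesInPreimage}.
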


\begin{proof}
We know that \(\{G^{-1}(B_n)\}_{n\in\N}\) and \(\left\{{F(B_n)}\right\}_{n\in\N}\) are metrics by Lemma~\ref{PreimageIsAMetric} and~Lemma~\ref{ImageIsSometimesAMetric}.
The~functor $G$ is trivially a compression with respect to~the~metrics \(\{G^{-1}(B_n)\}_{n\in\N}\) and \(\{B_n\}_{n\in\N}\).

As $F$ is fully faithful, we have \(\left\{{F(B_n)}\right\}_{n\in\N}\leq\{G^{-1}(B_n)\}_{n\in\N}\) by Lemma~\ref{AdjointsImageLiesInPreimage}, so 
$F$ is a compression with respect to \(\{B_n\}_{n\in\N}\) and \(\{G^{-1}(B_n)\}_{n\in\N}\).
Therefore, we may use Theorem~\ref{SunZhangTheorem} to establish that the functor \(\blank\circ F:\mathfrak{S}({\mathcal{S}})\rightarrow\mathfrak{S}(\mathcal{T})\) is well-defined and triangulated.

It remains to be checked that \(\blank\circ F\) is conservative.
Let \(E\in\mathfrak{S}({\mathcal{S}})\) satisfy \(E\circ F=0\). We know that \(E\simeq\moco E_*\) for some good Cauchy sequence \(E_*\) in \(\mathcal{S}\).
By Lemma~\ref{CompressionsPreservePrecompletion}, we have \(E\circ F \simeq\moco G(E_*)\) where \(G(E_*)\) is a~good Cauchy sequence by Lemma~\ref{CompressionsPreserveCauchySequences}.
By Lemma~\ref{ZeroDirectLimitOfBalls}, the good Cauchy sequence \(G(E_*)\) satisfies that for every \(i\in\N\) there exists \(M\in\N\) such that for all \(m\geq M\) we have \(G(E_m)\in \Summ(B_i)\).

Since $G$ is essentially surjective by Proposition~\ref{FullyFaithfulAdjoints}, Lemma~\ref{PreimageUnderEssentiallySurjectiveFunctorCommutesWithDirectSummands} yields
\[
E_m\in G^{-1}\big(\Summ(B_i)\big)=\Summ\big(G^{-1}(B_i)\big)
.\]
Because \(i\in\N\) was arbitrary,
Lemma~\ref{CharacterisationOfCompactlySupportedZeros} gives us \(E\simeq0\).
We obtain \(\Ker(\blank\circ F)=0\). By Lemma~\ref{ConservativeZeroKernel}, the functor $\blank\circ F$ is conservative.
\end{proof}

\begin{theorem}
\label{CalculatingCompletionsElsewhere}
Consider a diagram of triangulated functors 
\vspace{-0.2cm}
\begin{equation}\nonumber   
\xymatrix@C=0.5cm{\mathcal{S} \ar[rrr]^{G}     &&& \mathcal{T}\ar @/_1.5pc/[lll]_{F} \ar @/^1.5pc/[lll]_{J} } 
\end{equation}
consisting of two adjoint pairs \(F:\mathcal{T}\rightleftarrows \mathcal{S}:G\) and \(G:\mathcal{S}\rightleftarrows \mathcal{T}:J\). 
Let \(\{B_n\}_{n\in\N}\) be a metric on \(\mathcal{T}\).
Denote the~completion of~\(\mathcal{T}\) with respect to \(\{B_n\}_{n\in\N}\) as \(\mathfrak{S}(\mathcal{T})\) and
the completion of \(\mathcal{S}\) with respect to~the~preimage metric \(\{G^{-1}(B_n)\}_{n\in\N}\) as \(\mathfrak{S}(\mathcal{S})\).
If $F$ and $J$ are fully faithful, then the induced functor
\(\blank\circ G:\mathfrak{S}({\mathcal{T}})\rightarrow\mathfrak{S}(\mathcal{S})\) is a~well-defined triangulated equivalence with a triangulated inverse \(\blank\circ F\).
\end{theorem}

\begin{proof}
We know that \(\{G^{-1}(B_n)\}_{n\in\N}\) is a metric by~Lemma~\ref{PreimageIsAMetric}, while \(\left\{{F(B_n)}\right\}_{n\in\N}\) and \(\left\{{J(B_n)}\right\}_{n\in\N}\) are metrics by Lemma~\ref{ImageIsSometimesAMetric}.
We need to verify that all three functors in the diagram are compressions.
The~functor $G$ is clearly a compression with respect to \(\{G^{-1}(B_n)\}_{n\in\N}\) and \(\{B_n\}_{n\in\N}\).
The functor $F$ is a compression with respect to \(\{B_n\}_{n\in\N}\) and \(\{G^{-1}(B_n)\}_{n\in\N}\) because Lemma~\ref{AdjointsImageLiesInPreimage} guarantees that \(\left\{{F(B_n)}\right\}_{n\in\N}\leq\{G^{-1}(B_n)\}_{n\in\N}\).
Symmetrically, the functor $J$ is a compression with respect to \(\{B_n\}_{n\in\N}\) and \(\{G^{-1}(B_n)\}_{n\in\N}\) since \(\left\{{J(B_n)}\right\}_{n\in\N}\leq\{G^{-1}(B_n)\}_{n\in\N}\) by Lemma~\ref{DualAdjointsImageLiesInPreimage}.

By Theorem~\ref{SunZhangTheorem}, we obtain two induced triangulated functors between the completions. Namely \(\blank\circ F:\mathfrak{S}({\mathcal{S}})\rightarrow\mathfrak{S}(\mathcal{T})\) and \(\blank\circ G:\mathfrak{S}({\mathcal{T}})\rightarrow\mathfrak{S}(\mathcal{S})\),
where \(\blank\circ F\) is a left adjoint to \(\blank\circ G\) by Corollary~\ref{InheritanceOfFullFaithfulness}. The same corollary also gives us that \(\blank\circ G\) is fully faithful.

By Lemma~\ref{ConservatismWithRespectToPreimageMetric}, the functor \(\blank\circ F\) is conservative. 
We may conclude that \(\blank\circ F\) and \(\blank\circ G\) are mutually inverse equivalences using Corollary~\ref{FullyFaithfulAdjointConservative}.
\end{proof}

\begin{remark}
\label{AnAlternativeProof}
Alternatively, a~proof of Theorem~\ref{CalculatingCompletionsElsewhere} can be obtained using completions of recollements.
For a good metric \(\{B_n\}_{n\in\N}\) on \(\mathcal{T}\), the following argument is due to Zhang.

Under the assumptions of Theorem~\ref{CalculatingCompletionsElsewhere}, there exists a recollement of triangulated categories
\begin{equation}\nonumber   
\xymatrix@C=0.5cm{\Ker G \ar[rrr]^{} &&& \mathcal{S} \ar[rrr]^{G}  \ar @/_1.5pc/[lll]_{}  \ar @/^1.5pc/[lll]_{} &&& \mathcal{T}\ar @/_1.5pc/[lll]_{F} \ar @/^1.5pc/[lll]_{J} } 
\end{equation}

The functors F, G and J are compressions by the same reasoning as in the proof of~Theorem~\ref{CalculatingCompletionsElsewhere}.
After equipping the category \(\Ker G\) with the constant good metric \(\{\Ker G\}_{n\in\N}\), all the functors of the recollement become compressions. Then \cite[Theorem~1.2]{SunZhang21} applies, and we obtain a right recollement
\begin{equation}\nonumber   
\xymatrix@C=0.5cm{\mathfrak{S}(\Ker G) \ar[rrr]^{} &&& \mathfrak{S}(\mathcal{S}) \ar[rrr]^{\blank\circ F}    \ar @/^1.5pc/[lll]_{} &&& \mathfrak{S}(\mathcal{S}) \ar @/^1.5pc/[lll]_{\blank\circ G} } 
\end{equation}
However, the completion \(\mathfrak{S}(\Ker G)\) is zero by \cite[Example~1.12]{Neeman18}. Thus \(\blank\circ F\) and \(\blank\circ G\) are mutually inverse triangulated equivalences.
\end{remark}

\begin{corollary}
Let \(\mathcal{A}\) be an abelian category with enough injectives, enough projectives, exact products, and exact coproducts (e.g.\ \(\ModR\) for a ring \(R\)).
Let \(L:\mathbf{K}(\mathcal{A})\rightarrow\mathbf{D}(\mathcal{A})\) be the localisation functor from the homotopy category of \(\mathcal{A}\) to the derived category of \(\mathcal{A}\).
Then for~every metric \(\{B_n\}_{n\in\N}\) on \(\mathbf{D}(\mathcal{A})\), there is a triangulated equivalence of~categories \(\blank\circ L:\mathfrak{S}\big(\mathbf{D}(\mathcal{A})\big)\rightarrow\mathfrak{S}\big(\mathbf{K}(\mathcal{A})\big)\) where the~completion \(\mathfrak{S}\big(\mathbf{K}(\mathcal{A})\big)\) is calculated with respect to~the~metric \(\{L^{-1}(B_n)\}_{n\in\N}\).
\end{corollary}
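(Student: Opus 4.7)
My plan is to recognise this corollary as a direct instance of Theorem~\ref{CalculatingCompletionsElsewhere}, applied to the localisation functor $L$ in the role of $G$. Concretely, I will set $\mathcal{S}:=\mathbf{K}(\mathcal{A})$, $\mathcal{T}:=\mathbf{D}(\mathcal{A})$, $G:=L$ and seek to exhibit two triangulated functors $F,J:\mathbf{D}(\mathcal{A})\rightarrow\mathbf{K}(\mathcal{A})$ satisfying $F:\mathcal{T}\rightleftarrows\mathcal{S}:G$ and $G:\mathcal{S}\rightleftarrows\mathcal{T}:J$, with both $F$ and $J$ fully faithful. Once this configuration is in place, the equivalence $\blank\circ L$ with inverse $\blank\circ F$ follows immediately from the theorem.

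The main substance of the proof therefore lies in producing the adjoints $F$ and $J$ and arguing they are fully faithful. Here I would invoke standard existence theorems for resolutions of unbounded complexes. The projective generator together with countable coproducts allows one to build K-projective resolutions for every object of $\mathbf{K}(\mathcal{A})$ (this is a classical result of Spaltenstein and its refinements by e.g.\ Keller and Krause); the assignment of a K-projective resolution yields a triangulated functor $F:\mathbf{D}(\mathcal{A})\rightarrow\mathbf{K}(\mathcal{A})$ which is left adjoint to $L$. Dually, the injective cogenerator provides K-injective resolutions, giving a right adjoint $J:\mathbf{D}(\mathcal{A})\rightarrow\mathbf{K}(\mathcal{A})$. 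Both $F$ and $J$ land in the respective subcategories of K-projectives and K-injectives, on which $L$ restricts to an equivalence; equivalently, the counit $LF\Rightarrow 1_{\mathbf{D}(\mathcal{A})}$ and the unit $1_{\mathbf{D}(\mathcal{A})}\Rightarrow LJ$ are natural isomorphisms. By Proposition~\ref{FullyFaithfulAdjoints} this is precisely the statement that $F$ and $J$ are fully faithful.

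With these ingredients assembled, I would invoke Theorem~\ref{CalculatingCompletionsElsewhere} verbatim: endow $\mathbf{K}(\mathcal{A})$ with the preimage metric $\{L^{-1}(B_n)\}_{n\in\N}$ (a metric by Lemma~\ref{PreimageIsAMetric}) and conclude that the induced triangulated functor $\blank\circ L:\mathfrak{S}\big(\mathbf{D}(\mathcal{A})\big)\rightarrow\mathfrak{S}\big(\mathbf{K}(\mathcal{A})\big)$ is an equivalence with inverse $\blank\circ F$.

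The principal obstacle is the bookkeeping around existence of both adjoints in this generality: while the K-injective case is standard in Grothendieck-like settings, the existence of a left adjoint to $L$ for arbitrary unbounded complexes in a general abelian category with coproducts and a projective generator requires care, and I would cite the relevant existence results (Spaltenstein, Keller, Krause) rather than reprove them. Verifying that the resulting adjoints are triangulated and fully faithful is then a formal consequence of the universal property of the Verdier quotient, so the only genuinely new content in this corollary is the observation that the hypotheses of Theorem~\ref{CalculatingCompletionsElsewhere} are met.
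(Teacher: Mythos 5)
Your proposal is correct and follows essentially the same route as the paper: identify $F$ as the K-projective resolution functor (left adjoint to $L$), $J$ as the K-injective resolution functor (right adjoint to $L$), note both are fully faithful because they land in the K-projective/K-injective subcategories on which $L$ restricts to an equivalence, and invoke Theorem~\ref{CalculatingCompletionsElsewhere}. One small slip in terminology: for $F\dashv L$ the natural isomorphism $1_{\mathbf{D}(\mathcal{A})}\Rightarrow LF$ is the \emph{unit}, not the counit, and for $L\dashv J$ the natural isomorphism $LJ\Rightarrow 1_{\mathbf{D}(\mathcal{A})}$ is the \emph{counit}; the criterion from Proposition~\ref{FullyFaithfulAdjoints} that you invoke still applies correctly, so the argument stands.
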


\begin{proof}
Under the conditions on \(\mathcal{A}\), every object in \(\HomotopyCategory(\mathcal{A})\) has both a K-injective and a K-projective resolution by \cite[Proposition~4.3.4]{Krause21}. Using \cite[Proposition~4.3.1]{Krause21}, this is equivalent to $L$ having both fully faithful right adjoint and a fully faithful left adjoint. The statement now directly follows from Theorem~\ref{CalculatingCompletionsElsewhere}.
\end{proof}

\subsection{Categories with Serre functors}
\label{sub:SerreFunctors}

In the end of this section, we we illustrate Theorem~\ref{CalculatingCompletionsElsewhere} in the context of finite dimensional algebras of finite global dimension.
Let \(A\) be a finite dimensional algebra over a~field $K$. It is well-known (see e.g.\ \cite[Theorem~6.4.13]{Krause21}) that $A$ has finite global dimension if and only the category \(\derived^b(\modf\dashmodule A)\) possesses a Serre functor \(\Serre:\derived^b(\modf\dashmodule A)\rightarrow \derived^b(\modf\dashmodule A)\).

A Serre functor (originally defined in \cite[pg.\ 519]{BondalKapranov90}) is a $K$-linear triangulated autoequivalence on a $K$-linear triangulated category \(\mathcal{T}\) such that
there is an isomorphism \(\Hom_{\mathcal{T}}(B,C)\simeq D\Hom_{\mathcal{T}}\big(C,\Serre (B)\big)\) for all \(B,C\in\mathcal{T}\) natural in both variables, where \(D=\Hom_K(\blank,K):K\dashmodule\modf\rightarrow\modf\dashmodule K\) is the standard duality.

The following argument is essentially contained in the proof of \cite[Proposition~3.6]{BondalKapranov90}.

\begin{lemma}
\label{CreatingAdjoints}
Let $K$ be a field. Let \(\mathcal{T}\) and \(\mathcal{S}\) be $K$-linear triangulated categories with Serre functors \(\Serre_{\mathcal{T}}\) and \(\Serre_{\mathcal{S}}\), respectively. Let \(F:\mathcal{T}\rightleftarrows \mathcal{S}:G\) be adjoint $K$-linear, triangulated functors. Then $F$ has a left adjoint and $G$ has a right adjoint.
\end{lemma}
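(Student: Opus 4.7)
The plan is to produce the desired adjoints by twisting the existing adjunction with the Serre functors. Concretely, I would define
\[
L := \Serre_{\mathcal{T}}^{-1} \circ G \circ \Serre_{\mathcal{S}} : \mathcal{S}\rightarrow\mathcal{T} \quad\text{and}\quad R := \Serre_{\mathcal{S}} \circ F \circ \Serre_{\mathcal{T}}^{-1} : \mathcal{T}\rightarrow\mathcal{S},
\]
which are $K$-linear triangulated as compositions of $K$-linear triangulated functors (recall that Serre functors are autoequivalences, so $\Serre_{\mathcal{T}}^{-1}$ and $\Serre_{\mathcal{S}}^{-1}$ exist up to natural isomorphism).

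The heart of the proof is a chain of natural isomorphisms. To verify $L \dashv F$, for $A \in \mathcal{T}$ and $B \in \mathcal{S}$ I would compute
\[
\Hom_{\mathcal{S}}\big(B, F(A)\big) \simeq D\Hom_{\mathcal{S}}\big(F(A), \Serre_{\mathcal{S}}(B)\big) \simeq D\Hom_{\mathcal{T}}\big(A, G\Serre_{\mathcal{S}}(B)\big) \simeq \Hom_{\mathcal{T}}\big(\Serre_{\mathcal{T}}^{-1} G\Serre_{\mathcal{S}}(B), A\big),
\]
using Serre duality on $\mathcal{S}$, the given adjunction $F \dashv G$, and the equivalent reformulation $D\Hom(X,Y)\simeq\Hom(\Serre^{-1}Y, X)$ of Serre duality on $\mathcal{T}$ (obtained from the defining isomorphism by applying $D$ and substituting). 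The dual chain
\[
\Hom_{\mathcal{T}}\big(G(B), A\big) \simeq D\Hom_{\mathcal{T}}\big(\Serre_{\mathcal{T}}^{-1}(A), G(B)\big) \simeq D\Hom_{\mathcal{S}}\big(F\Serre_{\mathcal{T}}^{-1}(A), B\big) \simeq \Hom_{\mathcal{S}}\big(B, \Serre_{\mathcal{S}} F\Serre_{\mathcal{T}}^{-1}(A)\big)
\]
establishes $G \dashv R$ in the same manner.

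The main point to check carefully is that the resulting isomorphisms are natural in both variables, which is what makes $L$ and $R$ genuine adjoints rather than providing mere object-level bijections. This naturality is inherited from the naturality of the Serre pairing (which is part of the definition of a Serre functor), the naturality of the adjunction $F \dashv G$, and the fact that $D = \Hom_K(\blank, K)$ is a contravariant equivalence on finite-dimensional $K$-vector spaces, which is the relevant setting because the existence of Serre functors on $\mathcal{T}$ and $\mathcal{S}$ forces all the \(\Hom\) spaces appearing above to be finite-dimensional. I anticipate no serious obstacle: the argument is a formal manipulation closely following the construction in the proof of \cite[Proposition~3.6]{BondalKapranov90} that the paper refers to.
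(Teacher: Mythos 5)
Your proof is correct and takes essentially the same route as the paper: twist the given adjunction by the Serre functors via the defining Serre duality isomorphism, arriving at the same formulas $\Serre_{\mathcal{T}}^{-1} G\, \Serre_{\mathcal{S}}$ and $\Serre_{\mathcal{S}} F \Serre_{\mathcal{T}}^{-1}$ for the new adjoints. The only cosmetic differences are that you spell out both chains explicitly where the paper invokes duality for one of them, and you compress the paper's ``apply Serre duality, then autoequivalence'' two-step into the single reformulation $D\Hom(X,Y)\simeq\Hom(\Serre^{-1}Y,X)$.
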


\begin{proof}
Let \(B\in\mathcal{S}\) and \(C\in\mathcal{T}\). Then
\[
\begin{split}
\Hom_{\mathcal{T}}\big(G(B),C\big)&\simeq D\Hom_{\mathcal{T}}\big(C,\Serre_{\mathcal{T}}G(B)\big)\simeq D\Hom_{\mathcal{T}}\left(\Serre_{\mathcal{T}}^{-1}(C),G(B)\right)\\
&\simeq D\Hom_{\mathcal{S}}\left(F\Serre_{\mathcal{T}}^{-1}(C),B\right)\simeq D\Hom_{\mathcal{S}}\big(\Serre_{\mathcal{S}}F\Serre_{\mathcal{T}}^{-1}(C),\Serre_{\mathcal{S}}(B)\big)\\
&\simeq D\Hom_{\mathcal{S}}\big(\Serre_{\mathcal{S}}F\Serre_{\mathcal{T}}^{-1}(C),\Serre_{\mathcal{S}}(B)\big)\simeq\Hom_{\mathcal{S}}\big(B,\Serre_{\mathcal{S}}F\Serre_{\mathcal{T}}^{-1}(C)\big).
\end{split}
\]
Since all the isomorphisms above are natural, we get that \(\Serre_{\mathcal{S}}F\Serre_{\mathcal{T}}^{-1}\) is a right adjoint to $G$.

The proof that $F$ has a left adjoint is dual.
\end{proof}

\begin{remark}
\label{BonusCreatingAdjoints}
Under the notation of Lemma~\ref{CreatingAdjoints} and its proof, the right adjoint of~$G$ is equal to \(\Serre_{\mathcal{S}}F\Serre_{\mathcal{T}}^{-1}\). Since both of the Serre functors \(\Serre_{\mathcal{S}}\), \(\Serre_{\mathcal{T}}\) are autoequivalences, we get that the left adjoint $F$ to $G$ is fully faithful if and only if the right adjoint \(\Serre_{\mathcal{S}}F\Serre_{\mathcal{T}}^{-1}\) to $G$ is such. The same condition also holds for the adjoints of $F$. 
\end{remark}

Consequently, if we have a pair of adjoint functors between $K$-linear, triangulated categories with Serre functors, Lemma~\ref{CreatingAdjoints} guarantees that both of the functors from the adjoint pair have also an adjoint on the other side.
This is helpful while checking whether the conditions of Theorem~\ref{CalculatingCompletionsElsewhere} hold or not.

Since \(\derived^b(\modf\dashmodule A)\xhookrightarrow{}\derived(A)\) is a good extension whenever the algebra $A$ has finite global dimension (see~Section~\ref{sec:CertainCompletions}), we are able to take advantage of the following translation of Theorem~\ref{SunZhangTheorem} into the setting of good extensions.

\begin{proposition}
\label{InducedFunctorsInsideGoodExtensions}
Let \(\overline{F}:\overline{\mathcal{T}}\rightleftarrows \overline{\mathcal{S}}:\overline{G}\) be adjoint triangulated functors between triangulated categories with countable coproducts.
Suppose that there are good extension \(\nu_\mathcal{T}:\mathcal{T}\xhookrightarrow{}\overline{\mathcal{T}}\) and \(\nu_\mathcal{S}:\mathcal{S}\xhookrightarrow{}\overline{\mathcal{S}}\) with respect to~metrics \(\{B_n\}_{n\in\N}\) on \(\mathcal{T}\) and~\(\{C_n\}_{n\in\N}\) on \(\mathcal{S}\), respectively.
Assume that \(\overline{F}\) and \(\overline{G}\) restrict via \(\nu_\mathcal{T}\) and \(\nu_\mathcal{S}\) to~an~adjoint pair \(F:\mathcal{T}\rightleftarrows \mathcal{S}:G\), i.e.\ we have the following diagram
\[
\begin{tikzcd}
\overline{\mathcal{T}} \arrow[rr, "\overline{F}", bend right]                        &  & \overline{\mathcal{S}} \arrow[ll, "\overline{G}"', bend right]                       \\
                                                       &  &                                                        \\
\mathcal{T} \arrow[rr, "F"', bend right] \arrow[uu, "\nu_{\mathcal{T}}", hook] &  & S \arrow[ll, "G", bend right] \arrow[uu, "\nu_{\mathcal{S}}"', hook]
\end{tikzcd}
\]
where the square with \(F\) and \(\overline{F}\) commutes, and the square with \(G\) and \(\overline{G}\) commutes (both up to~natural isomorphisms).
If $F$ and $G$ are compressions and \(\overline{G}\) preserves coproducts, then we have a commutative (up to natural isomorphisms) diagram of triangulated functors
\[
\begin{tikzcd}
\mathfrak{S}'(\mathcal{T})  \arrow[dd, "\simeq", "\mathcal{Y}\restriction_{\mathfrak{S}'(\mathcal{T})}"'] &  & \mathfrak{S}'(\mathcal{S}) \arrow[ll, "\overline{G}\restriction_{\mathfrak{S}'(\mathcal{S})}"'] \arrow[dd, "\mathcal{Y}\restriction_{\mathfrak{S}'(\mathcal{S})}", "\simeq"'] \\
                      &  &                                        \\
\mathfrak{S}(\mathcal{T})                   &  & \mathfrak{S}(\mathcal{S}), \arrow[ll, "\blank\circ F"]                   
\end{tikzcd}
\]
where the equivalences \(\mathcal{Y}\restriction_{\mathfrak{S}'(\mathcal{T})}\) and \(\mathcal{Y}\restriction_{\mathfrak{S}'(\mathcal{S})}\) are given by Theorem~\ref{TheOnlyComputationalTool}.
\end{proposition}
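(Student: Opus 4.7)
The plan is to prove the proposition in three stages, the centerpiece being a natural isomorphism between \(\mathcal{Y}_{\mathcal{T}}\circ\overline{G}\) and \((\blank\circ F)\circ\mathcal{Y}_{\mathcal{S}}\), from which all other claims follow cleanly. First I would establish this natural isomorphism on the whole of \(\overline{\mathcal{S}}\): for \(X\in\overline{\mathcal{S}}\) and \(T\in\mathcal{T}\), the adjunction \(\overline{F}\dashv\overline{G}\) together with the commutativity \(\overline{F}\circ\nu_{\mathcal{T}}\simeq\nu_{\mathcal{S}}\circ F\) gives a chain of natural isomorphisms
\[
\Hom_{\overline{\mathcal{T}}}\big(\nu_{\mathcal{T}}(T),\overline{G}(X)\big) \simeq \Hom_{\overline{\mathcal{S}}}\big(\overline{F}\nu_{\mathcal{T}}(T),X\big) \simeq \Hom_{\overline{\mathcal{S}}}\big(\nu_{\mathcal{S}}(F(T)),X\big),
\]
so \(\mathcal{Y}_{\mathcal{T}}(\overline{G}(X))\simeq \mathcal{Y}_{\mathcal{S}}(X)\circ F\) naturally in \(X\).

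Next, I would check that \(\overline{G}\) sends \(\mathfrak{S}'(\mathcal{S})=\mathfrak{C}'(\mathcal{S})\cap\mathfrak{L}'(\mathcal{S})\) into \(\mathfrak{S}'(\mathcal{T})\). Preservation of \(\mathfrak{L}'\) uses that \(\overline{G}\) preserves countable coproducts (hence \(\N\)-indexed homotopy colimits, since \(\overline{G}\) is triangulated) combined with Lemma~\ref{CompressionsPreserveCauchySequences}: if \(E\simeq\hoco\nu_{\mathcal{S}}(E_*)\) for a good Cauchy sequence \(E_*\) in \(\mathcal{S}\), then \(G(E_*)\) is good Cauchy in \(\mathcal{T}\), and \(\overline{G}(E)\simeq\hoco\overline{G}\nu_{\mathcal{S}}(E_*)\simeq\hoco\nu_{\mathcal{T}}(G(E_*))\in\mathfrak{L}'(\mathcal{T})\), where the second isomorphism uses the compatibility of \(\overline{G}\circ\nu_{\mathcal{S}}\) with \(\nu_{\mathcal{T}}\circ G\) inherited from the hypothesis. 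Preservation of \(\mathfrak{C}'\) is where the natural isomorphism from the first stage does the work: if \(X\in\mathfrak{C}'(\mathcal{S})\), then \(\mathcal{Y}_{\mathcal{S}}(X)\in\mathfrak{C}(\mathcal{S})\), and Lemma~\ref{CompressionsPreserveCompactlySupportedElements} applied to the compression \(F\) gives \(\mathcal{Y}_{\mathcal{S}}(X)\circ F\in\mathfrak{C}(\mathcal{T})\); by the natural isomorphism this equals \(\mathcal{Y}_{\mathcal{T}}(\overline{G}(X))\), so \(\overline{G}(X)\in\mathfrak{C}'(\mathcal{T})\).

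Finally, I would deduce that the restricted functor \(\overline{G}\restriction_{\mathfrak{S}'(\mathcal{S})}\) is triangulated and that the square commutes up to natural isomorphism. The commutativity is simply the restriction of the Stage~1 isomorphism to \(\mathfrak{S}'(\mathcal{S})\). For the triangulated structure, the vertical arrows are triangulated equivalences by Theorem~\ref{TheOnlyComputationalTool} and the bottom horizontal arrow is triangulated by Theorem~\ref{SunZhangTheorem} (applicable because both \(F\) and \(G\) are compressions); hence \(\overline{G}\restriction_{\mathfrak{S}'(\mathcal{S})}\) is naturally isomorphic to the composition \((\mathcal{Y}_{\mathcal{T}}\restriction_{\mathfrak{S}'(\mathcal{T})})^{-1}\circ(\blank\circ F)\circ\mathcal{Y}_{\mathcal{S}}\restriction_{\mathfrak{S}'(\mathcal{S})}\) of triangulated functors, so it is itself triangulated. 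The main obstacle I anticipate is the \(\mathfrak{L}'\)-part of Stage~2: one must verify that the compatibility square of \(\overline{G},G\) with the good extensions survives the passage through a homotopy colimit, which hinges crucially on \(\overline{G}\) preserving countable coproducts.
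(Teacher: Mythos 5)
Your proof is correct and takes a genuinely different, and arguably cleaner, route than the paper's. The paper works entirely with homotopy colimits of good Cauchy sequences: for \(X\simeq\hoco\nu_{\mathcal{S}}(E_*)\in\mathfrak{S}'(\mathcal{S})\), it computes \(\overline{G}(X)\simeq\hoco\nu_{\mathcal{T}}G(E_*)\) (using coproduct-preservation and the square with \(\overline{G}\)), observes \(\mathcal{Y}(\overline{G}(X))\simeq\moco G(E_*)\) and \(\mathcal{Y}(X)\circ F\simeq\moco G(E_*)\) via Lemma~\ref{CompressionsPreservePrecompletion}, and thereby obtains both the membership \(\overline{G}(X)\in\mathfrak{S}'(\mathcal{T})\) and the commutativity of the square, pointwise. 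Your Stage~1 instead proves the natural isomorphism \(\mathcal{Y}_{\mathcal{T}}\circ\overline{G}\simeq(\blank\circ F)\circ\mathcal{Y}_{\mathcal{S}}\) on \emph{all} of \(\overline{\mathcal{S}}\) directly from the adjunction \(\overline{F}\dashv\overline{G}\) and the square with \(\overline{F}\), which buys you a visibly natural commutativity of the diagram as well as the \(\mathfrak{C}'\)-preservation for free via Lemma~\ref{CompressionsPreserveCompactlySupportedElements}; you then only need the homotopy colimit argument (still relying on coproduct-preservation by \(\overline{G}\) and the \(\overline{G}\)-square) to handle the \(\mathfrak{L}'\)-part, exactly as in the paper. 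The paper uses only the square involving \(\overline{G}\) and never invokes the square with \(\overline{F}\), whereas you use both; conversely, your approach is more modular and makes the naturality of the final commutativity manifest rather than checking it objectwise. Both arguments are essentially the same length and both are sound.
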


\begin{proof}
The existence of the induced functor \(\blank\circ F\) on the completions is established by Theorem~\ref{SunZhangTheorem}.
Theorem~\ref{TheOnlyComputationalTool} guarantees that the vertical maps  \(\mathcal{Y}\restriction_{\mathfrak{S}'(\mathcal{T})}\) and \(\mathcal{Y}\restriction_{\mathfrak{S}'(\mathcal{S})}\) exist and are triangulated equivalences.

Let \(X\in\mathfrak{S}'(\mathcal{S})\). Then \(X\simeq\hoco\nu_{\mathcal{S}}(E_*)\) for some good Cauchy sequence in~\(\mathcal{S}\).
Let
\[
\coprod_{i\in\N}\nu_{\mathcal{S}}(E_i)\rightarrow\coprod_{i\in\N}\nu_{\mathcal{S}}(E_i)\rightarrow X\rightarrow\coprod_{i\in\N}\Sigma\nu_{\mathcal{S}}(E_i)
\]
be the triangle defining \(\hoco\nu_{\mathcal{S}}(E_*)\) in \(\overline{\mathcal{S}}\).
Since \(\overline{G}\) is triangulated and preserves coproducts, the triangle
\[
\coprod_{i\in\N}\overline{G}\nu_{\mathcal{S}}(E_i)\rightarrow\coprod_{i\in\N}\overline{G}\nu_{\mathcal{S}}(E_i)\rightarrow \overline{G}(X)\rightarrow\coprod_{i\in\N}\Sigma \overline{G}\nu_{\mathcal{S}}(E_i)
\]
shows that \(\overline{G}(X)=\hoco \overline{G}\nu_{\mathcal{S}}(E_*)\). 
We infer that \(\overline{G}(X)\simeq\hoco\nu_{\mathcal{T}}G(E_*)\) because of~the~commutativity rule \(\overline{G}\nu_{\mathcal{S}}\simeq\nu_{\mathcal{T}}G\).
Since \(G(E_*)\) is a Cauchy sequence by Lemma~\ref{CompressionsPreserveCauchySequences}, we get \(\overline{G}(X)\in\mathfrak{L}'(\mathcal{T})\).

Using the good extension \(\nu_{\mathcal{T}}\) we get \(\mathcal{Y}\big(\overline{G}(X)\big)\simeq\moco G(E_*)\).
Also it holds that \(\mathcal{Y}(X)\simeq\moco E_*\) since \(\nu_{\mathcal{S}}\) is a good extension.
By Lemma~\ref{CompressionsPreservePrecompletion},
\[
\mathcal{Y}(X)\circ F\simeq\moco G(E_*)\in\mathfrak{S}(\mathcal{T})\subseteq\mathfrak{C}(\mathcal{T})
.\]
This proves that \(\overline{G}(X)\in\mathcal{Y}^{-1}\big(\mathfrak{C}(\mathcal{T})\big)=\mathfrak{C}'(\mathcal{T})\), so  \(\overline{G}(X)\in\mathfrak{S}'(\mathcal{T})=\mathfrak{L}'(\mathcal{T})\cap\mathfrak{C}'(\mathcal{T})\).

We conclude that \(\overline{G}\restriction_{\mathfrak{S}'(\mathcal{S})}:\mathfrak{S}'(\mathcal{S})\rightarrow\mathfrak{S}'(\mathcal{T})\) is well defined. 
The commutativity of the~diagram follows from the already proven chain of isomorphisms
\[
\mathcal{Y}\big(\overline{G}(X)\big)\simeq\moco G(E_*)\simeq\mathcal{Y}(X)\circ F
.\]
\end{proof}

With that in mind, we can proceed to our example.

\begin{example}
Let $\mathcal{T}:=\mathbf{D}^b(\modf\dashmodule K)$ be the~bounded derived category of~finite dimensional $K$-vector spaces with the standard shift \(\Sigma\), i.e.\ the category of~finite dimensional graded vector spaces.
Consider the metric \(\{B_n\}_{n\in\N}\) given by the~cohomology functor
\(
H:\mathbf{D}^b(\modf\dashmodule K)\rightarrow\modf\dashmodule K\)
via the~formula
\[
B_n=\left\{T\in\mathbf{D}^b(\modf\dashmodule K):\forall i>-n,H^i(T)=0\right\} \text{ for $n\in\N$}
.\]
The metric \(\{B_n\}_{n\in\N}\) is an example of a metric of ``type i)'' from \cite[Example~12]{Neeman20}.
The completion with respect to \(\{B_n\}_{n\in\N}\) is triangle equivalent to \(\mathcal{T}\) perceived as a~subcategory \(\mathfrak{S}'(\mathcal{T})=\mathbf{D}^b(\modf\dashmodule K)=\mathcal{T}\)
of a good extension \(\mathbf{D}(K)\) (using Theorem~\ref{TheOnlyComputationalTool} and Theorem~\ref{PrototypicalGoodExtension}). 

Let us fix the orientation \(2\rightarrow1\) for the Dynkin quiver \(A_2\) and look at the~derived category $\mathcal{S}:=\mathbf{D}^b(\modf\dashmodule KA_2)$ with the standard shift \(\Sigma\).
The module category \(\modf\dashmodule KA_2\) contains three indecomposable modules, namely \(\simple(1)=\PP(1)=0\rightarrow K\), \(\PP(2)=\I(1)=K\xrightarrow{1}K\) and \(\I(2)=\simple(2)=K\rightarrow0\).

We have an adjoint pair of functors
\[
\blank\otimes_K^{\mathbf{L}}\PP(2):\mathbf{D}^b(\modf\dashmodule K)\rightleftarrows\mathbf{D}^b(\modf\dashmodule KA_2):\mathbf{R}\Hom_{KA_2}\big(\PP(2),\blank\big).
\]
This gives us a preimage metric
\[
\left\{\mathbf{R}\Hom_{KA_2}\big(\PP(2),\blank\big)^{-1}(B_n)\right\}_{n\in\N}
\] on~the~category \(\mathbf{D}^b(\modf\dashmodule KA_2)\), where we can calculate  
\[
\begin{split}
\mathbf{R}\Hom_{KA_2}\big(\PP(2),\blank\big)^{-1}(B_n)=\left\{X\in\mathcal{S}:\forall i>-n,H^i(X)\in\coprodf\big(\simple(1)\big)\right\}
\end{split}
\]
for \(n\in\N\).
The completion  \(\mathfrak{S}_G'(\mathcal{S})\subseteq \mathbf{D}(KA_2)\) with respect to this metric computed inside the good extension satisfies
\(\mathfrak{S}_G'(\mathcal{S})=\langle\simple(2)\rangle\) (the explicit calculation can be done using Theorem~\ref{DynkinCompletions}). 

Both the categories \(\mathcal{T}\) and \(\mathcal{S}\) have Serre functors.
The functor \(\blank\otimes_K^{\mathbf{L}}\PP(2)\) is fully faithful and its right adjoint \(\mathbf{R}\Hom_{KA_2}\big(\PP(2),\blank\big)\) has a fully faithful right adjoint by Lemma~\ref{CreatingAdjoints} and~Remark~\ref{BonusCreatingAdjoints}.
Hence Theorem~\ref{CalculatingCompletionsElsewhere} applies. After translating the~statement of the theorem for good extensions (as in Proposition~\ref{InducedFunctorsInsideGoodExtensions}) we get that the restriction 
\(\mathbf{R}\Hom_{KA_2}\big(\PP(2),\blank\big):\mathfrak{S}_G'(\mathcal{S})=\langle\simple(2)\rangle\rightarrow\mathfrak{S}'(\mathcal{T})=\mathcal{T}\) is a~triangulated equivalence of categories.

\end{example}

\section{Good metrics and improvements}
\label{sec:improvements}

The purpose of this section is to compare the properties of a good metric and~a~generic (\textit{a priori} not good) metric. The notion of good metric is, on one hand, stronger, since it requires the~additional rapid decrease condition. On the~other hand, it makes the calculation of the completion easier as described below.

We also make an argument that a general (non-good) metric can be improved to a~good metric (called improvement).
Then the completion with respect to the~original metric is a triangulated subcategory of the completion of the improvement metric.

\begin{lemma}
\label{BadIsGood}
Let $\mathcal{T}$ be a triangulated category with a~good metric \(\{B_n\}_{n\in\N}\).
Let \(E_*:= E_1\rightarrow E_2 \rightarrow \cdots\) be a directed diagram in \(\mathcal{T}\). Then \(E_*\) is a~good Cauchy sequence if and only if it is a Cauchy sequence.
\end{lemma}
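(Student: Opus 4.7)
The plan is to prove the two implications separately, with the non-trivial direction relying on a simple iteration of the rapid decrease condition.

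The direction ``good Cauchy $\Rightarrow$ Cauchy'' is immediate: a good Cauchy sequence satisfies the defining condition of a Cauchy sequence by specialising to $j=0$, so nothing needs to be done here beyond pointing out that fact.

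For the converse, the key observation is that in a good metric the rapid decrease condition $\Sigma^{\pm 1}B_{n+1}\subseteq B_n$ iterates: by induction on $|j|$, one obtains the inclusion
\[
\Sigma^{j} B_{i+|j|}\subseteq B_{i}
\qquad\text{for all } i\in\N \text{ and } j\in\Z.
\]
Assuming $E_*$ is a Cauchy sequence, fix $i\in\N$ and $j\in\Z$. Applying the Cauchy condition at level $i+|j|$ produces an index $M\in\N$ such that for every $m'\geq m\geq M$ the cone $D_{m,m'}$ appearing in the triangle $E_m\to E_{m'}\to D_{m,m'}\to \Sigma E_m$ lies in $B_{i+|j|}$. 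The display above then yields $\Sigma^{j}D_{m,m'}\in B_{i}$, which is exactly the good Cauchy condition. Since $i$ and $j$ were arbitrary, this finishes the argument.

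There is no real obstacle here; the only mildly fiddly point is the iteration of the rapid decrease condition, which I would justify by a short induction treating the cases $j\geq 0$ and $j\leq 0$ separately (using $\Sigma B_{n+1}\subseteq B_n$ in the first case and $\Sigma^{-1}B_{n+1}\subseteq B_n$ in the second). This lemma effectively says that under a good metric one does not need to distinguish between Cauchy and good Cauchy sequences, which is the reason good metrics make later computations and statements slicker.
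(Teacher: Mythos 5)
Your proof is correct and follows essentially the same route as the paper: the forward direction is immediate, and the backward direction fixes $i,j$, invokes the Cauchy condition at level $i+|j|$, and iterates the rapid decrease condition to conclude $\Sigma^{\pm j}D_{m,m'}\in B_i$. The only difference is presentational — you package the iteration as the clean inclusion $\Sigma^{j}B_{i+|j|}\subseteq B_i$, whereas the paper iterates directly on the cone $D_{m,m'}$.
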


\begin{proof}
``$\Rightarrow$'' Every good Cauchy sequence is a Cauchy sequence by the definition.

``$\Leftarrow$'' Fix \(i\in\N\) and \(j\in\Z\). We find \(M\in\N\) such that for any choice of numbers \(m'\geq m\geq M\) the~triangle \(E_m\rightarrow E_{m'}\rightarrow D_{m,m'}\rightarrow\Sigma E_m\) satisfies \(D_{m,m'} \in B_{i+|j|}\). Then the rapid decrease gives us \(\Sigma^{\pm1}D_{m,m'} \in B_{i+|j|-1}\), \(\Sigma^{\pm2}D_{m,m'} \in B_{i+|j|-2}\) etc.

In particular, \(\Sigma^{\pm j}D_{m,m'} \in B_{i}\).
Hence, the sequence \(E_*\) is good Cauchy.
\end{proof}

\begin{definition}
\label{DefinitionImprovement}
Let $\mathcal{T}$\, be a triangulated category with a metric \(\{B_n\}_{n\in\N}\).
We define \textit{the improvement of}~\(\{B_n\}_{n\in\N}\) as the sequence of subcategories \(\{C_n\}_{n\in\N}\) of~\(\mathcal{T}\) given inductively by the rule \(C_1:=B_1\) and
\[
C_n:=B_n\cap\Sigma C_{n-1}\cap\Sigma^{-1}C_{n-1}  \text{ for } n>1. 
\]
\end{definition}

\begin{remark}
If \(\{B_n\}_{n\in\N}\) is a good metric, it is equal to its own improvement. The~rapid decrease property of good metrics gives us \(B_n\subseteq \Sigma B_{n-1}\), \(B_n\subseteq \Sigma^{-1} B_{n-1}\), and consequently \(B_n= B_n\cap\Sigma B_{n-1}\cap\Sigma^{-1} B_{n-1}\) for \(n>1\).     
\end{remark}

\begin{lemma}
\label{ExplicitDescriptionOfImprovement}
Let $\mathcal{T}$ be a triangulated category with a metric \(\{B_n\}_{n\in\N}\) together with its improvement \(\{C_n\}_{n\in\N}\).
Then \(C_n=\bigcap_{i=-n+1}^{n-1}\Sigma^iB_{n-|i|}\) for all \(n\in\N\).
\end{lemma}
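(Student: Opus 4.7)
The plan is to prove the formula by induction on $n$. The base case $n=1$ is immediate since $C_1 = B_1$ and the right-hand side degenerates to $\Sigma^0 B_1 = B_1$.

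For the inductive step, assume $C_{n-1} = \bigcap_{i=-n+2}^{n-2} \Sigma^i B_{n-1-|i|}$. Then by the definition of the improvement,
\[
C_n \;=\; B_n \;\cap\; \Sigma C_{n-1} \;\cap\; \Sigma^{-1} C_{n-1},
\]
and applying $\Sigma^{\pm 1}$ to the induction hypothesis and reindexing yields
\[
\Sigma C_{n-1} = \bigcap_{j=-n+3}^{n-1} \Sigma^j B_{n-1-|j-1|}, \qquad \Sigma^{-1} C_{n-1} = \bigcap_{j=-n+1}^{n-3} \Sigma^j B_{n-1-|j+1|}.
\]
I would then check the two inclusions separately. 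For ``$\supseteq$'', each summand $\Sigma^i B_{n-|i|}$ in the right-hand side of the desired formula is produced: the middle term $i=0$ is $B_n$ itself; for $1 \leq i \leq n-1$ the index $|i-1| = i-1$ makes the $j=i$ term from $\Sigma C_{n-1}$ equal $\Sigma^i B_{n-i}$; and symmetrically for $-n+1 \leq i \leq -1$ the index $|i+1| = -i-1$ makes the $j=i$ term from $\Sigma^{-1} C_{n-1}$ equal $\Sigma^i B_{n+i} = \Sigma^i B_{n-|i|}$.

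For ``$\subseteq$'', one must confirm that the remaining terms appearing in $\Sigma C_{n-1}$ and $\Sigma^{-1} C_{n-1}$ are redundant, i.e.\ absorbed by terms already present in the claimed intersection. Concretely, the $j=0$ terms give $B_{n-2}$, which is absorbed by $B_n$ because $\{B_n\}$ is non-increasing; and for $i < 0$ the $\Sigma C_{n-1}$ contribution $\Sigma^i B_{n+i-2}$ is absorbed by the $\Sigma^{-1} C_{n-1}$ contribution $\Sigma^i B_{n+i}$ (again by monotonicity), and symmetrically for $i > 0$. Together these absorptions give the reverse inclusion and close the induction.

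The only mildly delicate step is the bookkeeping with the absolute values when splitting the cases $i > 0$, $i = 0$, $i < 0$, but no new category-theoretic ingredient is needed beyond monotonicity of the chain $\{B_n\}$; I do not anticipate a genuine obstacle.
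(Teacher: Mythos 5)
Your argument is correct and, in substance, follows the same inductive strategy as the paper's proof: both substitute the induction hypothesis into the recursion $C_n = B_n \cap \Sigma C_{n-1} \cap \Sigma^{-1} C_{n-1}$ and reduce to bookkeeping plus the monotonicity of the chain $\{B_n\}$ (so that terms like $B_{n-2}$ or $\Sigma^i B_{n+i-2}$ are rendered redundant). The only flaw is that you have swapped the labels on the two inclusions: the argument you call ``$\supseteq$'' (every summand of $\bigcap_{i=-n+1}^{n-1}\Sigma^i B_{n-|i|}$ already appears among the factors of $B_n \cap \Sigma C_{n-1} \cap \Sigma^{-1} C_{n-1}$) actually proves $C_n \subseteq \bigcap_{i=-n+1}^{n-1}\Sigma^i B_{n-|i|}$, since $C_n$ is the intersection of a superset of those factors; while the ``absorption'' argument you call ``$\subseteq$'' proves $C_n \supseteq \bigcap_{i=-n+1}^{n-1}\Sigma^i B_{n-|i|}$, since it shows every factor of $C_n$ contains some $\Sigma^i B_{n-|i|}$. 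Relabel those two cases and the proof is complete and matches the paper's.
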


\begin{proof}
Denote \(G_n:=\bigcap_{i=-n+1}^{n-1}\Sigma^iB_{n-|i|}\) for all \(n\in\N\).    
We prove \(C_n=G_n\) by~induction on \(n\in\N\).
The case \(n=1\) is trivial.

Suppose that we already know \(C_n=G_n\) for some \(n\in\N\). Let \(X\in G_{n+1}\).
From the~induction premise and 
\(
X\in G_{n+1}=\bigcap_{i=-n}^{n}\Sigma^iB_{n+1-|i|}
,\)
we infer \(X\in B_{n+1}\),
\[
\Sigma X\in\bigcap_{i=-n+1}^{0}\Sigma^iB_{n-|i|}\cap\bigcap_{i=1}^{n-1}\Sigma^iB_{n+2-|i|}
\subseteq \bigcap_{i=-n+1}^{n-1}\Sigma^iB_{n-|i|}=G_n=C_n
,\]
and similarly \(\Sigma^{-1}X\in C_n\), so \(X\in C_{n+1}\) and \(G_{n+1}\subseteq C_{n+1}\).

Conversely, let \(X\in C_{n+1}\subseteq B_{n+1}\). Then \(\Sigma X\in C_n=G_n=\bigcap_{i=-n+1}^{n-1}\Sigma^iB_{n-|i|}\),
so we have \(X\in\bigcap_{i=-n}^{0}\Sigma^iB_{n+1-|i|}\). An analogous argument with \(\Sigma^{-1}X\) yields \(X\in\bigcap_{i=0}^{n}\Sigma^iB_{n+1-|i|}\).
Hence, we conclude that \(
X\in G_{n+1}=\bigcap_{i=-n}^{n}\Sigma^iB_{n+1-|i|}
\)
and~\(G_{n+1}=C_{n+1}\).
\end{proof}

\begin{proposition}
Let $\mathcal{T}$\, be a triangulated category with a metric \(\{B_n\}_{n\in\N}\). Let \(\{C_n\}_{n\in\N}\) be the improvement of \(\{B_n\}_{n\in\N}\). Then \(\{C_n\}_{n\in\N}\) is a good metric.    
\end{proposition}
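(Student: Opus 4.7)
The plan is to verify the three defining properties of a good metric for the sequence $\{C_n\}_{n\in\N}$: each $C_n$ is a full subcategory containing $0$, the chain $C_1 \supseteq C_2 \supseteq \cdots$ is extension-closed and non-increasing, and the rapid decrease property holds. All three will follow from induction on $n$ together with the recursive definition.

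First, I would handle the chain-of-subcategories bookkeeping: each $C_n$ is a full subcategory containing $0$ because $B_n$ is (since $\{B_n\}$ is a metric), and intersection of full subcategories containing $0$ is again a full subcategory containing $0$. The inclusion $C_n \subseteq C_{n-1}$ is shown by induction: for $n=2$, we have $C_2 \subseteq B_2 \subseteq B_1 = C_1$; for $n > 2$, assuming $C_{n-1}\subseteq C_{n-2}$, one gets $\Sigma^{\pm 1} C_{n-1} \subseteq \Sigma^{\pm 1} C_{n-2}$ and $B_n \subseteq B_{n-1}$, so intersecting yields $C_n \subseteq C_{n-1}$.

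Next, I would show by induction that each $C_n$ is extension-closed (i.e.\ $C_n * C_n = C_n$). The base case $C_1 = B_1$ is given. For the inductive step, observe that if $C_{n-1}$ is extension-closed, then so are $\Sigma C_{n-1}$ and $\Sigma^{-1} C_{n-1}$, since $\Sigma$ is a triangulated autoequivalence and applying an exact autoequivalence to a distinguished triangle yields a distinguished triangle. An intersection of three extension-closed full subcategories of $\mathcal{T}$ is again extension-closed (apply extension-closedness of each factor to any triangle $X \to Y \to Z \to \Sigma X$ with $X, Z$ in the intersection). Hence $C_n = B_n \cap \Sigma C_{n-1} \cap \Sigma^{-1} C_{n-1}$ is extension-closed.

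Finally, rapid decrease falls straight out of the recursive definition. For any $n\in\N$, we have $C_{n+1} \subseteq \Sigma C_n$ and $C_{n+1} \subseteq \Sigma^{-1} C_n$ by construction, which upon shifting yield $\Sigma^{-1} C_{n+1} \subseteq C_n$ and $\Sigma C_{n+1} \subseteq C_n$; combined with $C_{n+1} \subseteq C_n$ (already established), we obtain
\[
\Sigma^{-1} C_{n+1} \cup C_{n+1} \cup \Sigma C_{n+1} \subseteq C_n,
\]
which is exactly the rapid decrease condition. I do not expect a real obstacle here — the recursion was tailor-made so that the shifts of $C_{n+1}$ automatically land in $C_n$; the only mildly technical point is the inductive verification that extension-closedness propagates, which is why one must write $C_n$ as an intersection of three extension-closed pieces rather than simply tacking on shifts to $B_n$.
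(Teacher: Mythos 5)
Your proof is correct and takes essentially the same approach as the paper. Both establish extension-closedness of $C_n$ by induction on $n$, exploiting that $\Sigma C_{n-1}$ and $\Sigma^{-1} C_{n-1}$ inherit extension-closedness from $C_{n-1}$ (the paper does this by applying $\Sigma^{\pm1}$ to the defining triangle, while you invoke it as an abstract lemma together with the fact that intersections of extension-closed subcategories are extension-closed); the remaining properties, which the paper dismisses as immediate, are handled by you explicitly but in the same spirit, by unwinding the recursive definition.
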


\begin{proof}
We only need to check the closure under extensions for the other conditions of being a good metric (Defintion~\ref{DefinitionMetric}) are obvious from the definition of the improvement. 
Let \(n\in\N\).
By Lemma~\ref{ExplicitDescriptionOfImprovement}, we have \(C_n=\bigcap_{i=-n+1}^{n-1}\Sigma^iB_{n-|i|}\). This implies that \(C_n\) is extension-closed as an intersection of extension-closed subcategories of~\(\mathcal{T}\).
\end{proof}

\begin{remark}
The improvement \(\{C_n\}_{n\in\N}\) of \(\{B_n\}_{n\in\N}\) in the sense of Definition~\ref{DefinitionImprovement} is a refinement of \(\{B_n\}_{n\in\N}\) since \(C_n\subseteq B_n\) for all \(n\in\N\).   
\end{remark}

\begin{proposition}
\label{ImprovementIsNotRandom}
Let $\mathcal{T}$ be a triangulated category with a metric \(\{B_n\}_{n\in\N}\) together with its improvement \(\{C_n\}_{n\in\N}\).
Then \(\{C_n\}_{n\in\N}\) is the coarsest good metric (up to equivalence of metrics) which is finer than \(\{B_n\}_{n\in\N}\). 
\end{proposition}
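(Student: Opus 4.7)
The plan is to verify three things. First, that $\{C_n\}_{n\in\N}$ is a good metric finer than $\{B_n\}_{n\in\N}$, which is immediate from the preceding proposition and from the inclusion $C_n\subseteq B_n$ already observed in the remark. Second, and the heart of the matter, that any good metric $\{D_n\}_{n\in\N}$ on $\mathcal{T}$ finer than $\{B_n\}_{n\in\N}$ is automatically finer than $\{C_n\}_{n\in\N}$. This is exactly the universal property expressing that $\{C_n\}_{n\in\N}$ is the coarsest good refinement of $\{B_n\}_{n\in\N}$.

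For the second point I would fix $n\in\N$ and exploit the explicit description
\[
C_n=\bigcap_{i=-n+1}^{n-1}\Sigma^{i}B_{n-|i|}
\]
from Lemma~\ref{ExplicitDescriptionOfImprovement}. My task is to produce a single index $m$ with $D_m\subseteq \Sigma^{i}B_{n-|i|}$ for every $i$ with $|i|\leq n-1$, i.e.\ $\Sigma^{-i}D_m\subseteq B_{n-|i|}$. Since $\{D_n\}_{n\in\N}$ is finer than $\{B_n\}_{n\in\N}$, for each $k\in\{1,\ldots,n\}$ I pick $m_k$ with $D_{m_k}\subseteq B_k$ and set $M:=\max\{m_1,\ldots,m_n\}$, so $D_M\subseteq B_{n-|i|}$ for all $|i|\leq n-1$ by monotonicity of the chain.

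To move shifts across the chain, I then use the rapid decrease of $\{D_n\}_{n\in\N}$. Iterating $\Sigma^{\pm1}D_{k+1}\subseteq D_k$ gives $\Sigma^{\pm j}D_{k+j}\subseteq D_k$ for all $j\geq0$. Setting $m:=M+(n-1)$ and fixing $i$ with $|i|\leq n-1$, this yields
\[
\Sigma^{-i}D_m\subseteq \Sigma^{-i}D_{M+|i|}\subseteq D_M\subseteq B_{n-|i|},
\]
using that $\Sigma^{-i}$ is an autoequivalence and that $D_m\subseteq D_{M+|i|}$. Intersecting over all $|i|\leq n-1$ then gives $D_m\subseteq C_n$, as required.

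The main obstacle I anticipate is purely bookkeeping: ensuring that a single index $m$, chosen independently of $i$, works simultaneously for all $2n-1$ conditions defining $C_n$. The trick is that one may separate the choice of $M$ (which handles the coarseness of $\{D_n\}_{n\in\N}$ against $\{B_n\}_{n\in\N}$) from the shift $m-M=n-1$ (which handles the rapid decrease). Once this decomposition is in place, the estimate above is a one-line application of the good metric hypothesis on $\{D_n\}_{n\in\N}$, and no further ingredients are needed.
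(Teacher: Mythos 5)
Your argument is correct, and it takes a genuinely different route from the paper's. The paper proceeds by induction on $n$: having found $m_n$ with $A_{m_n}\subseteq C_n$, it picks $m\geq m_n$ with $A_m\subseteq B_{n+1}$ and then uses rapid decrease once to land $A_{m+1}$ inside $B_{n+1}\cap\Sigma C_n\cap\Sigma^{-1}C_n=C_{n+1}$; the recursive definition of the improvement does the rest. You instead invoke the closed-form description $C_n=\bigcap_{i=-n+1}^{n-1}\Sigma^i B_{n-|i|}$ from Lemma~\ref{ExplicitDescriptionOfImprovement} and produce a single index $m=M+(n-1)$ in one shot, separating the refinement step (choice of $M$) from the shift absorption ($\Sigma^{\pm j}D_{k+j}\subseteq D_k$, iterated rapid decrease). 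The two proofs are morally the same — your approach effectively offloads the induction into Lemma~\ref{ExplicitDescriptionOfImprovement}, which the paper re-derives from scratch — but yours makes the quantitative dependence of $m$ on $n$ explicit, which the paper's recursive argument leaves implicit. One cosmetic slip: you announce ``three things'' but list two; this has no bearing on correctness.
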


\begin{proof}
Let \(\{A_n\}_{n\in\N}\) be a good metric finer than \(\{B_n\}_{n\in\N}\). We claim that it also holds that \(\{A_n\}_{n\in\N}\leq\{C_n\}_{n\in\N}\).
We show that for all \(n\in\N\) there exists \(m_n\in\N\) with \(A_{m_n}\subseteq C_n\) by induction on $n$. For the base case, we can simply find \(m_1\in\N\) such that \(A_{m_1}\subseteq B_1=C_1\). 

Now, suppose that we already have \(m_n\in\N\) with \(A_{m_n}\subseteq C_n\).
We find \(m\in\N\), \(m\geq m_n\), such that \(A_m\subseteq B_{n+1}\).
By the rapid decrease property of \(\{A_n\}_{n\in\N}\) and~the~induction premise, we have
\[
\Sigma^{-1}A_{m+1}\cup A_{m+1}\cup\Sigma A_{m+1}\subseteq A_m \subseteq A_{m_n} \subseteq C_n
.\]
Hence, it holds that
\[
A_{m+1}\subseteq\left\{T\in B_{n+1}:\Sigma^{\pm1}T\in C_n\right\}=C_{n+1}
,\]
so we may finish the proof by~setting \(m_{n+1}:=m+1\).
\end{proof}

\begin{proposition}
\label{GoodAndBadShareCauchySequences}
Let $\mathcal{T}$ be a triangulated category with a metric \(\{B_n\}_{n\in\N}\) together with its improvement \(\{C_n\}_{n\in\N}\).
Let \(E_*:= E_1\rightarrow E_2 \rightarrow \cdots\) be a directed diagram in \(\mathcal{T}\).
Then the following statements are equivalent:
\begin{enumerate}[label=(\roman*)]
    \item \(E_*\) is a good Cauchy sequence with respect to \(\{B_n\}_{n\in\N}\).
    \item \(E_*\) is a good Cauchy sequence with respect to \(\{C_n\}_{n\in\N}\).
    \item \(E_*\) is a Cauchy sequence with respect to \(\{C_n\}_{n\in\N}\).
\end{enumerate}
\end{proposition}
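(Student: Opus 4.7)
\medskip

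The plan is to use the fact that $\{C_n\}_{n\in\N}$ is a good metric (already established), so the equivalence (ii) $\Leftrightarrow$ (iii) is immediate from Lemma~\ref{BadIsGood}. This reduces the proposition to the equivalence (i) $\Leftrightarrow$ (ii), which I would prove by a two-sided argument using the explicit description of $C_i$ from Lemma~\ref{ExplicitDescriptionOfImprovement}.

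For the implication (ii) $\Rightarrow$ (i), I would simply observe that $C_n \subseteq B_n$ for every $n \in \N$ (which is visible from either Definition~\ref{DefinitionImprovement} or Lemma~\ref{ExplicitDescriptionOfImprovement}). So if $E_*$ is good Cauchy with respect to the finer metric $\{C_n\}_{n\in\N}$, then for each $i \in \N$ and $j \in \Z$ the index $M$ guaranteeing $\Sigma^j D_{m,m'} \in C_i$ also guarantees $\Sigma^j D_{m,m'} \in B_i$, so $E_*$ is good Cauchy with respect to $\{B_n\}_{n\in\N}$.

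The content of the proof lies in the converse (i) $\Rightarrow$ (ii). Here I would fix $i\in\N$ and $j\in\Z$ and use Lemma~\ref{ExplicitDescriptionOfImprovement} to rewrite the desired membership $\Sigma^j D_{m,m'}\in C_i$ as the finite conjunction
\[
\Sigma^{j-k} D_{m,m'} \in B_{i-|k|} \text{ for all } k\in\Z, \; -i+1\leq k \leq i-1.
\]
For each such $k$, the assumption that $E_*$ is good Cauchy with respect to $\{B_n\}_{n\in\N}$ (applied with shift $j-k$ and neighbourhood index $i-|k|$) yields an index $M_k\in\N$ such that $\Sigma^{j-k}D_{m,m'}\in B_{i-|k|}$ for all $m'\geq m\geq M_k$. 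Setting $M:=\max_{-i+1\leq k \leq i-1} M_k$, a finite maximum, gives the desired index witnessing that $E_*$ is good Cauchy with respect to $\{C_n\}_{n\in\N}$.

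The only mildly delicate point, and the one I would flag as the main obstacle, is keeping the indexing in Lemma~\ref{ExplicitDescriptionOfImprovement} straight when translating between $\Sigma^j D_{m,m'}\in\Sigma^k B_{i-|k|}$ and $\Sigma^{j-k} D_{m,m'}\in B_{i-|k|}$; otherwise the argument is a bookkeeping exercise combining Lemma~\ref{BadIsGood} and Lemma~\ref{ExplicitDescriptionOfImprovement}.
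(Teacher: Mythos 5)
Your proof is correct and follows essentially the same route as the paper: (ii) $\Leftrightarrow$ (iii) from Lemma~\ref{BadIsGood}, (ii) $\Rightarrow$ (i) from $C_n\subseteq B_n$, and the substantive direction via Lemma~\ref{ExplicitDescriptionOfImprovement} together with a finite maximum over shift indices. The only cosmetic difference is that the paper closes the cycle by proving (i) $\Rightarrow$ (iii) rather than (i) $\Rightarrow$ (ii), which lets it drop the extra ambient shift $j$ and simply observe $\bigcap_{i=-n+1}^{n-1}\Sigma^i B_n\subseteq\bigcap_{i=-n+1}^{n-1}\Sigma^i B_{n-|i|}=C_n$.
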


\begin{proof}
``i) $\Rightarrow$ iii)''
Fix \(n\in \N\), and find \(M\in\N\) such that for every \(m'\geq m\geq M\) the~triangle \(E_m\rightarrow E_{m'}\rightarrow D_{m,m'}\) satisfies
\[
\Sigma^i D_{m,m'} \in B_n \text{\ \ \ for all\ \ \ } i\in\{-n+1,-n+2,\ldots,n-2,n-1\}
.\]
Then
\[
D_{m,m'}\in\bigcap_{i=-n+1}^{n-1}\Sigma^iB_{n}\subseteq \bigcap_{i=-n+1}^{n-1}\Sigma^iB_{n-|i|} =C_n
\]
by Lemma \ref{ExplicitDescriptionOfImprovement}.
Hence \(E_*\) is Cauchy with respect to the improvement.

``ii) $\Leftrightarrow$ iii)'' By Lemma \ref{BadIsGood}.

``ii) $\Rightarrow$ i)'' Is obvious because \(C_n\subseteq B_n\) for all \(n\in\N\).
\end{proof}

\begin{corollary}
\label{ImprovementContainsOriginalCompletion}
Let $\mathcal{T}$ be a triangulated category with a metric \(\{B_n\}_{n\in\N}\) together with its improvement \(\{C_n\}_{n\in\N}\).
Denote by \(\mathfrak{L}_B(\mathcal{T})\), \(\mathfrak{C}_B(\mathcal{T})\) and \(\mathfrak{S}_B(\mathcal{T})\) the~pre\=/completion, the compactly supported elements and the completion with respect to~the~metric \(\{B_n\}_{n\in\N}\), respectively. We also introduce the analogous notation
\(\mathfrak{L}_C(\mathcal{T})\), \(\mathfrak{C}_C(\mathcal{T})\) and \(\mathfrak{S}_C(\mathcal{T})\) for \(\{C_n\}_{n\in\N}\).
Then \(\mathfrak{S}_B(\mathcal{T})\) is a triangulated subcategory of \(\mathfrak{S}_C(\mathcal{T})\).
\end{corollary}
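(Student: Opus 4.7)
The plan is to deduce the corollary almost directly from Proposition~\ref{GoodAndBadShareCauchySequences} together with a comparison of the compactly supported elements. First I would show the set-theoretic inclusion $\mathfrak{S}_B(\mathcal{T})\subseteq\mathfrak{S}_C(\mathcal{T})$, and then verify that this inclusion respects the triangulated structure.

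For the set-theoretic inclusion, Proposition~\ref{GoodAndBadShareCauchySequences} asserts that a directed diagram in $\mathcal{T}$ is a good Cauchy sequence with respect to $\{B_n\}_{n\in\N}$ if and only if it is a good Cauchy sequence with respect to $\{C_n\}_{n\in\N}$. Since the pre-completions are defined entirely in terms of module colimits of good Cauchy sequences, this yields $\mathfrak{L}_B(\mathcal{T})=\mathfrak{L}_C(\mathcal{T})$. On the compactly supported side, the inclusion $C_i\subseteq B_i$ gives $Y(\Sigma^j C_i)\subseteq Y(\Sigma^j B_i)$, so $Y(\Sigma^j B_i)^{\perp}\subseteq Y(\Sigma^j C_i)^{\perp}$; taking the appropriate unions and intersections yields $\mathfrak{C}_B(\mathcal{T})\subseteq\mathfrak{C}_C(\mathcal{T})$. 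Combining these two facts gives
\[
\mathfrak{S}_B(\mathcal{T})=\mathfrak{C}_B(\mathcal{T})\cap\mathfrak{L}_B(\mathcal{T})\subseteq\mathfrak{C}_C(\mathcal{T})\cap\mathfrak{L}_C(\mathcal{T})=\mathfrak{S}_C(\mathcal{T}).
\]

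It remains to argue that $\mathfrak{S}_B(\mathcal{T})$ is triangulated as a subcategory of $\mathfrak{S}_C(\mathcal{T})$. Both categories sit inside $\Mod\dashmodule\mathcal{T}$ and inherit the same shift autoequivalence $\Sigma$ from there, and by Theorem~\ref{NeemansCompletionTheorem} both are themselves triangulated, so closure under $\Sigma^{\pm 1}$ is automatic. For distinguished triangles, recall that a triangle in $\mathfrak{S}_B(\mathcal{T})$ is distinguished precisely when it is isomorphic to the module colimit of a good Cauchy sequence of distinguished triangles in $\mathcal{T}$, where ``good Cauchy'' is taken with respect to $\{B_n\}_{n\in\N}$. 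By Proposition~\ref{GoodAndBadShareCauchySequences}, the very same sequence of triangles is also good Cauchy with respect to $\{C_n\}_{n\in\N}$, so the triangle is distinguished in $\mathfrak{S}_C(\mathcal{T})$ as well.

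There is no serious obstacle here; the entire argument is a bookkeeping exercise once Proposition~\ref{GoodAndBadShareCauchySequences} is available. The only mild subtlety is the compatibility of distinguished triangles on both sides, but this is immediate from the definition once one observes that the relevant class of good Cauchy sequences is the same for the two metrics.
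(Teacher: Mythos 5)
Your proof is correct and follows essentially the same route as the paper: Proposition~\ref{GoodAndBadShareCauchySequences} gives $\mathfrak{L}_B(\mathcal{T})=\mathfrak{L}_C(\mathcal{T})$, and $C_n\subseteq B_n$ gives $\mathfrak{C}_B(\mathcal{T})\subseteq\mathfrak{C}_C(\mathcal{T})$, hence $\mathfrak{S}_B(\mathcal{T})\subseteq\mathfrak{S}_C(\mathcal{T})$. You additionally spell out why the inclusion is compatible with shifts and distinguished triangles, which the paper leaves implicit but which is exactly the right thing to check.
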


\begin{proof}
Proposition~\ref{GoodAndBadShareCauchySequences} yields \(\mathfrak{L}_B(\mathcal{T})\)=\(\mathfrak{L}_C(\mathcal{T})\).
Since \(\{C_n\}_{n\in\N}\leq\{B_n\}_{n\in\N}\), we have \(\mathfrak{C}_B(\mathcal{T})\subseteq\mathfrak{C}_C(\mathcal{T})\), which implies \(\mathfrak{S}_B(\mathcal{T})\subseteq\mathfrak{S}_C(\mathcal{T})\).
\end{proof}

\begin{lemma}
\label{CompactlySupportedForGoodMetrics}
Let $\mathcal{T}$ be a triangulated category with shift \(\Sigma\) and a~good metric \(\{B_n\}_{n\in\N}\).
Then \(\mathfrak{C}(\mathcal{T})=\mathfrak{WC}(\mathcal{T})\).
\end{lemma}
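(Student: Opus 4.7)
The inclusion $\mathfrak{C}(\mathcal{T}) \subseteq \mathfrak{WC}(\mathcal{T})$ is immediate: if $F \in \mathfrak{C}(\mathcal{T}) = \bigcap_{j\in\Z}\bigcup_{i\in\N}Y(\Sigma^j B_i)^{\perp}$, then specializing to $j=0$ gives some $i \in \N$ with $F \in Y(B_i)^{\perp} \subseteq \mathfrak{WC}(\mathcal{T})$. So all the work lies in the reverse inclusion, and the plan is to leverage the rapid decrease condition to upgrade a single right-orthogonality condition (witnessing weak compact support) into a family of such conditions indexed by $j \in \Z$ (witnessing compact support).

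The key observation is that, by iterating the defining inclusion $\Sigma^{-1}B_{n+1}\cup B_{n+1}\cup \Sigma B_{n+1}\subseteq B_n$ of a good metric, one obtains $\Sigma^{k}B_{n+k}\subseteq B_n$ and $\Sigma^{-k}B_{n+k}\subseteq B_n$ for every $k \geq 0$ and $n \in \N$. In particular, for any $j \in \Z$ and any $n \in \N$, we have the containment $\Sigma^{j}B_{n+|j|}\subseteq B_n$. I would state this as a short inductive preliminary.

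Now given $F \in \mathfrak{WC}(\mathcal{T})$, choose $i_0 \in \N$ with $F \in Y(B_{i_0})^{\perp}$, and fix an arbitrary $j \in \Z$. Setting $i := i_0 + |j|$, the preliminary yields $\Sigma^{j} B_i \subseteq B_{i_0}$, hence $Y(\Sigma^j B_i) \subseteq Y(B_{i_0})$ and therefore $Y(B_{i_0})^{\perp} \subseteq Y(\Sigma^j B_i)^{\perp}$. Thus $F \in Y(\Sigma^j B_i)^{\perp}$, and since $j$ was arbitrary we conclude $F \in \bigcap_{j\in\Z}\bigcup_{i\in\N}Y(\Sigma^j B_i)^{\perp}=\mathfrak{C}(\mathcal{T})$.

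There is no real obstacle here: the argument is purely a bookkeeping exercise exploiting that goodness makes the chain $\{B_n\}_{n\in\N}$ ``shift-stable up to reindexing''. The only mild care needed is to handle the two signs of $j$ symmetrically, which is why both halves $\Sigma B_{n+1}\subseteq B_n$ and $\Sigma^{-1}B_{n+1}\subseteq B_n$ of the rapid decrease condition are used.
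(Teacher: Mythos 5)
Your proof is correct and follows essentially the same route as the paper's: the easy inclusion by specializing $j=0$, and the reverse inclusion by iterating the rapid decrease condition to get $\Sigma^{j}B_{i+|j|}\subseteq B_{i}$, from which compact support follows. The paper is just more terse about the induction.
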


\begin{proof}
The inclusion \(\mathfrak{C}(\mathcal{T})\subseteq\mathfrak{WC}(\mathcal{T})\) is trivial.
For the other inclusion, pick \(j\in\Z\) and \mbox{\(F\in\mathfrak{WC}(\mathcal{T})\)}. So \(F\in Y(B_i)^{\perp}\) for some \(i\in\N\).
By the goodness of \(\{B_n\}_{n\in\N}\), we have \(\Sigma^j B_{i+|j|}\subseteq B_i\).
Hence \(F\in Y\left(\Sigma^j B_{i+|j|}\right)^{\perp}\). Since $j$ was arbitrary, the proof follows.
\end{proof}

\begin{remark}
\label{GoodIsTheBest}
After combining Lemmas \ref{BadIsGood} and \ref{CompactlySupportedForGoodMetrics}, we see that if we have a~good metric, it does not matter whether we calculate the completion using good or ordinary Cauchy sequences, weakly or ordinarily compactly supported elements - the result is always the same.
\end{remark}

\subsection{Examples}
\label{sub:ExamplesImprovements}
We have shown that the completion with respect to the improvement contains the completion with respect to the original metric (Corollary~\ref{ImprovementContainsOriginalCompletion}), and that completions with respect to good metrics can be computed using only (\textit{a~priori} non-good) Cauchy sequences (Remark~\ref{GoodIsTheBest}).

This subsection consists of examples and counterexamples attesting that we cannot, in full generality, say more. The above conditions are indeed necessary.

The completions in all the examples below are calculated inside a good extension (as in Theorem~\ref{PrototypicalGoodExtension}) using Theorem~\ref{DynkinCompletions}.

\begin{example}
\label{GradedVectorSpaces}
Let $\mathcal{T}$ be a triangulated category with a metric \(\{B_n\}_{n\in\N}\) together with its improvement \(\{C_n\}_{n\in\N}\).
Thanks to Proposition~\ref{GoodAndBadShareCauchySequences} we know that both metrics yield the same good Cauchy sequences, so the pre-completion \(\mathfrak{L}(\mathcal{T})\) is the~same regardless of which of these metric we use.
However, this is not, in general, the case for the compactly supported elements.

Let $\mathcal{T}:=\mathbf{D}^b(\modf\dashmodule K)$
be the~bounded derived category of the category of~finite dimensional $K$-vector spaces, i.e.\ 
the category of~finite dimensional graded $K$-vector spaces.
Consider a metric \(\{B_n\}_{n\in\N}\) given by the~cohomology functor
\(
H:\mathbf{D}^b(\modf\dashmodule K)\rightarrow\modf\dashmodule K\)
via the formula 
\[
 B_n=\{T\in\mathbf{D}^b(\modf\dashmodule K):\forall i>-n,i\neq 0,H^i(T)=0\} \text{ for $n\in\N$}
.\]
The improvement \(\{C_n\}_{n\in\N}\) of \(\{B_n\}_{n\in\N}\) can be calculated using Lemma~\ref{ExplicitDescriptionOfImprovement}, and  we obtain
\[
C_1=B_1 \text{ and } C_n=\{T\in\mathbf{D}^b(\modf\dashmodule K):\forall i>-n,H^i(T)=0\} \text{ for $n>2$}
.\]
From the perspective of graded vector spaces, the~only difference between the~metric \(\{B_n\}_{n\in\N}\) and its improvement \(\{C_n\}_{n\in\N}\) is that \(\{B_n\}_{n\in\N}\) allows non-zero vector spaces in~degree~0.

Note that \(\{C_n\}_{n\in\N}\) is equivalent to the metric of ``type i)'' from \cite[Example~12]{Neeman20}.
The completion of \(\mathcal{T}\) with respect to \(\{C_n\}_{n\in\N}\) is equal to \(\mathcal{T}=0^{\perp}\cap\,\mathcal{T}\) by Theorem~\ref{DynkinCompletions} since \(\bigcap_{n=1}^{\infty}C_n=0\).
However, the completion of \(\mathcal{T}\) with respect to \(\{B_n\}_{n\in\N}\) is \(0=K[-\infty,\infty]^{\perp}\) by Theorem~\ref{DynkinCompletions} because \(K\in\bigcap_{n=1}^{\infty}B_n\). Indeed, no non-zero object of \(T\in\mathcal{T}\) is compactly supported with respect to \(\{B_n\}_{n\in\N}\) as one can always find \(j\in\Z\) and a non-zero morphism \(\Sigma^jK\rightarrow T\). 
\end{example}

\begin{example}
\label{GoodnessIsNecessary}
If we try to compute a ``completion'' using non-good Cauchy sequences with respect to a non-good metric, we may not get a triangulated category.

Look at the~bounded derived category $\mathcal{T}:=\mathbf{D}^b(\modf\dashmodule \Z)$  of the category of~fi\-ni\-te\-ly generated abelian groups.
Let \(\mathcal{C}\subseteq\modf\dashmodule\Z\) be the subcategory consisting of finitely generated torsion groups.
Consider the constant (non-good) metric \(\{B_n\}_{n\in\N}=\{\mathcal{C}\}_{n\in\N}\), i.e.\ 
\[
B_n=\{T\in\mathbf{D}^b(\modf\dashmodule \Z):\forall i\in\Z,i\neq0,H^i(T)=0\wedge H^0(T)\in\mathcal{C}\} \text{ for all $n\in\N$}
.\]

We make use of the good extension
\(\derived^b(\modf\dashmodule \Z)\xhookrightarrow{}\derived(\Z)\) provided by~Theorem~\ref{PrototypicalGoodExtension}.
Similarly as in Example~\ref{HereditaryNonExample}, we look at the following diagram in \(\modf\dashmodule\Z\)
\[
C_*:= \Z \xhookrightarrow{2\cdot\blank} \Z \xhookrightarrow{3\cdot\blank} \Z \xhookrightarrow{4\cdot\blank} \Z \xhookrightarrow{} \cdots
\]
with \(\varinjlim C_*=\Q\) in \(\Mod\dashmodule\Z\).
We have for all \(i\in\N\) that \(\Coker(i\cdot\blank)\in\mathcal{C}\), so if we view \(\Z\) as a complex concentrated in degree $0$ and interpret $C_*$ as a diagram in~\(\mathbf{D}^b(\modf\dashmodule \Z)\), then \(C_*\) is Cauchy (but not good Cauchy) with \(\hoco C_*=\Q\) in \(\derived(\Z)\).

Trivially, we have \(\Q\in(\Sigma^j\mathcal{C})^{\perp}\) for \(j>0\).
Because $\Q$ is torsion-free and injective, we also have \(\Q\in(\Sigma^j\mathcal{C})^{\perp}\) for \(j\leq0\).
All together, we get that \(\Q\in\mathfrak{L}'(\mathcal{T})\cap\mathfrak{C}'(\mathcal{T})\).

On the other hand, it holds that \(\Sigma\Q\notin\mathfrak{L}'(\mathcal{T})\) since for any Cauchy sequence \(E_*\) the \((-1)\)-st cohomology
\[
H^{-1}\big(\hoco E_*\big)\simeq \varinjlim H^{-1}(E_*)\in\modf\dashmodule\Z
\]
is a directed colimit of an eventually constant sequence \(H^{-1}(E_*)\) of finitely generated abelian groups. We conclude that \(\mathfrak{L}'(\mathcal{T})\cap\mathfrak{C}'(\mathcal{T})\)
is not triangulated.

We finish by observing that the property of \(\mathfrak{L}'(\mathcal{T})\cap\mathfrak{C}'(\mathcal{T})\) not being triangulated transfers to \(\mathfrak{L}(\mathcal{T})\cap\mathfrak{C}(\mathcal{T})\) as well.
Indeed, we have \(\moco C_*=\mathcal{Y}(\Q)\) by~the~goodness of the extension
\(\derived^b(\modf\dashmodule \Z)\xhookrightarrow{}\derived(\Z)\).
Since
\[
\Sigma\mathcal{Y}(\Q)=\Sigma\Hom(\blank,\Q)=\Hom(\Sigma^{-1}\blank,\Q)\simeq\Hom(\blank,\Sigma\Q)=\mathcal{Y}(\Sigma\Q)
\]
and \(\mathcal{Y}\restriction\mathfrak{L}'(\mathcal{T})\) is full and conservative by \cite[Remark~21]{Neeman18} (see also Lemma~\ref{ExtensionFakeYonedaLemma}), we obtain then any potential Cauchy sequence converging towards \(\Sigma\mathcal{Y}(\Q)\) in \(\Mod\dashmodule\mathcal{T}\) would have homotopy colimit \(\Sigma\Q\) in \(\derived(\Z)\), which is impossible.
Hence the~category \(\mathfrak{L}(\mathcal{T})\cap\mathfrak{C}(\mathcal{T})\) is not triangulated.
\end{example}

\begin{example}
If we try to compute a ``completion'' using weakly compactly supported elements, we may not get a triangulated category.   

We consider the~bounded derived category $\mathcal{T}:=\mathbf{D}^b(\modf\dashmodule K)$  of the category of graded finite dimensional vector spaces with the standard shift \(\Sigma\).
Consider a~metric \(\{B_n\}_{n\in\N}\) given by the~cohomology functor
\(
H:\mathbf{D}^b(\modf\dashmodule K)\rightarrow\modf\dashmodule K\)
via the formula
\[
B_n=\{T\in\mathbf{D}^b(\modf\dashmodule K):\forall i\in\Z,i\neq0,H^i(T)=0\} \text{ for } n\in\N
.\]

We make use of the good extension
\(\derived^b(\modf\dashmodule K)\xhookrightarrow{}\derived(K)\) and we view ordinary vector spaces from \(\Mod\dashmodule K\) as complexes concentrated in degree $0$.
Since it holds that \(K\in B_n\) for all \(n\in\N\), we have \(K\notin\mathfrak{WC}'(\mathcal{T})\).
However \(\Sigma K\in\mathfrak{L}'(\mathcal{T})\cap\mathfrak{WC}'(\mathcal{T})\), so the~subcategory is not closed under the shift, nor triangulated.

We may than infer that \(\mathfrak{L}(\mathcal{T})\cap\mathfrak{WC}(\mathcal{T})\) is not triangulated using the same argument as in Example~\ref{GoodnessIsNecessary}.
\end{example}

\ 

\subsubsection*{Conflict of Interests} The author has no relevant financial or non-financial interests to disclose.

% = Bibliography ==============================================================
\bibliographystyle{plain}
\bibliography{literatura}

\end{document}